\newcommand{\Q}{\mathbf Q}
\newcommand{\C}{\mathbf C}
\newcommand{\F}{\mathbb F}
\newcommand{\p}{\mathfrak p}
\renewcommand{\phi}{\varphi}
\newcommand{\eps}{\varepsilon}
\renewcommand{\O}{\mathcal O}
\newcommand{\Ucal}{\mathcal U}
\newcommand{\A}{\mathbf A}
\newcommand{\bs}{\backslash}
\newcommand{\1}{\mathbf 1}
\newcommand{\bmx}{\left( \begin{matrix}}
\newcommand{\emx}{\end{matrix} \right)}
\renewcommand{\~}{\widetilde}
\newcommand{\Xef}{{X(E\!:\!F)}}
\newcommand{\Xefpi}{{X(E\!:\!F\!:\!\pi')}}
\renewcommand{\ell}{\mathrm{ell}}
\newcommand{\JL}{\mathrm{JL}}
\newcommand{\LJ}{\mathrm{LJ}}
\DeclareMathOperator{\semi}{ss}
\DeclareMathOperator{\reg}{reg}\DeclareMathOperator{\main}{main}
\DeclareMathOperator{\GL}{GL} \DeclareMathOperator{\GSp}{GSp} 
\DeclareMathOperator{\Gal}{Gal}  \DeclareMathOperator{\Hom}{Hom}
  \DeclareMathOperator{\vol}{vol}
\DeclareMathOperator{\SL}{SL}  
 \DeclareMathOperator{\PGL}{PGL} \DeclareMathOperator{\SO}{SO}
  \DeclareMathOperator{\twist}{tw}
\newtheorem{lem}{Lemma}
\numberwithin{lem}{section}
\newtheorem{prop}[lem]{Proposition}
\newtheorem{thm}[lem]{Theorem}
\newtheorem{defn}[lem]{Definition}
\newtheorem{cor}[lem]{Corollary}
\newtheorem{conj}[lem]{Conjecture}
\newtheorem{rem}{Remark}
\numberwithin{rem}{section}
\theoremstyle{definition}
\numberwithin{equation}{section}
\title[Periods and nonvanishing for $\GL(2n)$]{Periods and nonvanishing of central $L$-values for $\GL(2n)$}
\author[Brooke, Kimball, Dave]{Brooke Feigon, Kimball Martin, David Whitehouse}
 \address{Brooke Feigon: Department of Mathematics,
The City College of New York,
CUNY,
NAC 8/133,
New York, NY 10031} \email{bfeigon@ccny.cuny.edu }
\address{Kimball Martin: University of Oklahoma, 
Department of Mathematics, 
Norman, OK 73019}
\email{kmartin@math.ou.edu}
\address{David Whitehouse}
\begin{document}

\date{\today}



\maketitle

\begin{abstract} Let $\pi$ be a cuspidal automorphic representation of $\PGL(2n)$ over a number field $F$,
and $\eta$ the quadratic id\`ele class character attached to a quadratic extension $E/F$.
Guo and Jacquet conjectured a relation between the nonvanishing of $L(1/2,\pi)L(1/2, \pi \otimes \eta)$ for $\pi$ of symplectic type and the nonvanishing of certain $\GL(n,E)$ periods. 
When $n=1$, this specializes to a well-known result of Waldspurger.  
We prove this conjecture, and related global results, under some local hypotheses using a simple relative trace formula.  

We then apply these global results to obtain local results on distinguished supercuspidal 
representations, which partially establish a conjecture of Prasad and Takloo-Bighash.
\end{abstract}


\section{Introduction}

One topic of recent interest is the study of how
periods behave along functorial transfers, for instance among inner forms \`a la Gross--Prasad conjectures.  The relative trace formula is an analytic tool developed for such problems.
Here we apply a simple relative trace formula to show that certain periods behave
as conjectured by Guo and Jacquet with respect to the Jacquet--Langlands lift under some
local hypotheses.  While these local hypotheses are somewhat restrictive, 
they are merely technical hypotheses used to simplify the trace formula and
we can verify them sufficiently often to
provide strong evidence for both the Guo--Jacquet conjecture and the feasibility
of a trace formula approach.

Then we apply our global results to obtain local results on distinguished supercuspidal representations.  These local results establish part of \cite[Conjecture 1]{PTB} and 
\cite[Conjecture 3]{FM-conj}, which concern
local root number criteria for the existence of certain local linear forms and global periods.

\subsection{Background}
Let $E/F$ be a quadratic extension of number fields, and $\eta$ the associated quadratic
id\`ele class character.  Let $\A$ be the ad\`eles of $F$ and $\A_E$ the ad\`eles of $E$.
Let $\Xef$ denote the set of quaternion
algebras $D/F$ in which  $E$ embeds.  Note the matrix algebra $M_2$ always lies in $\Xef$.  
For $D \in \Xef$, let $G_D = \GL_n(D)$.  When $D$
is fixed, we also write $G=G_D$.   Put $G'=\GL_{2n}$.  For each $G_D$, at almost all places
$v$ of $F$, we have $G_{D_v} \cong G'_v$. 
The Jacquet--Langlands correspondence in \cite{badulescu:renard} 
 associates to each discrete series 
representation $\pi_D$ of $G_D(\A)$ a discrete series representation $\pi' = \JL(\pi_D)$ of $G'(\A)$ such that $\pi_{D_v} \cong \pi'_v$ for almost all $v$.  
Strong multiplicity one for these groups means this
near local equivalence condition specifies a unique $\pi'$ for each $\pi_D$, and vice versa when
the inverse Jacquet--Langlands lift exists.

All of our representations are taken to be unitary with trivial central character, and we will also
assume both $\pi_D$ and $\pi'$ are cuspidal.

We now describe the periods of interest.  Let $H=H_D$ be the subgroup $\GL_n(E)$ of $G_D$ 
and $H'$ be the subgroup
$\GL_n \times \GL_n$ of $G'$ (all embeddings of these subgroups are conjugate, but we
fix embeddings in Section~\ref{notation-sec}).  
Denote by $dh$ and $dh'$ Haar measures on $H$ and $H'$. 
We say $\pi_D$ is $H$-distinguished if the linear form on $\pi_D$ given by
\[ \mathcal P_D(\phi) = \int_{H(F)Z(\A) \bs H(\A)} \phi(h)\, dh \]
is not identically zero.  (Throughout $Z$ denotes the center of the relevant group.) Now consider the linear forms on $\pi'$ given by
\[ \mathcal P'(\phi) = \int_{H'(F)Z(\A) \bs H'(\A)} \phi(h') \, dh', \quad
 \mathcal P_\eta'(\phi) = \int_{H'(F)Z(\A) \bs H'(\A)} \phi(h') \eta(\det(h')) \, dh'. \]
We say that $\pi'$ is 
$H'$- (resp.\ $(H',\eta)$-) distinguished if the linear form $\mathcal P'$ (resp.\  $\mathcal P'_\eta$)
is not identically 0.

These periods are intimately connected with central $L$-values.  Specifically, we have the 
following consequence of a result of Friedberg and Jacquet.
\begin{thm} [\cite{friedberg:1993}] Suppose $\pi'$ is cuspidal.  Then $\pi'$ is both $H'$- and 
$(H',\eta)$-distinguished if and 
only if (a) $\pi'$ is of symplectic type, i.e., $L(s, \pi', \Lambda^2)$ has a pole at $s=1$, and (b)
\[ L(1/2, \pi'_E) = L(1/2, \pi') L(1/2, \pi' \otimes \eta) \ne 0.  \]
\end{thm}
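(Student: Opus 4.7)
The approach is the integral representation of Friedberg--Jacquet. For a cusp form $\phi \in \pi'$, writing $h' = \diag(h'_1, h'_2) \in H'$, introduce the zeta integrals
$$ Z(s, \phi) = \int_{H'(F)Z(\A) \bs H'(\A)} \phi(h') \left| \frac{\det h'_1}{\det h'_2} \right|^{s-1/2} dh' $$
and the twisted analogue $Z_\eta(s, \phi)$ obtained by inserting $\eta(\det h'_1 / \det h'_2)$. Both converge absolutely for all $s$ by cuspidality, so define entire functions of $s$. Since $\eta^2 = 1$, specialization at $s = 1/2$ recovers $\mathcal{P}'(\phi)$ and $\mathcal{P}'_\eta(\phi)$ respectively.

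The central step is to unfold. Fourier-expanding $\phi$ along the unipotent radical of the $(n,n)$-parabolic and carrying out the integration, $Z(s, \phi)$ becomes a sum over $H'$-orbits of characters which collapses to a single Mellin-type integral over $Z(\A) \GL_n(F) \bs \GL_n(\A)$ whose integrand is a Shalika coefficient of $\phi$ against $|\det g|^{s - 1/2}$. In particular $Z(s, \phi)$ is identically zero unless $\pi'$ admits a nonzero global Shalika model; by Jacquet--Shalika this is equivalent to $L(s, \pi', \Lambda^2)$ having a pole at $s = 1$, i.e., condition (a). When (a) holds, local uniqueness of Shalika models renders the unfolded integral Eulerian, and one obtains for factorizable $\phi$ a factorization
$$ Z(s, \phi) = L(s, \pi') \prod_v e_v(s, W_v), \qquad Z_\eta(s, \phi) = L(s, \pi' \otimes \eta) \prod_v e^\eta_v(s, W_v), $$
where the $e_v, e^\eta_v$ are entire functions of $s$ equal to $1$ for almost all $v$ with unramified data.

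Both implications of the theorem now follow. If $\mathcal{P}'(\phi)$ and $\mathcal{P}'_\eta(\phi)$ are both nonzero at some $\phi$, then the unfolding already forces $\pi'$ to admit a Shalika model, giving (a); and the factorization, evaluated at $s = 1/2$, forces $L(1/2, \pi') L(1/2, \pi' \otimes \eta) \ne 0$, giving (b). Conversely, granting (a) and (b), one must produce $\phi$ with $\mathcal{P}'(\phi) \ne 0$ and, independently, $\phi$ with $\mathcal{P}'_\eta(\phi) \ne 0$. The main remaining issue is purely local: at each ramified place $v$ one must exhibit a local Shalika--Whittaker vector $W_v$ with $e_v(1/2, W_v) \ne 0$ (and analogously for $e_v^\eta$). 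This is settled by the standard analytic theory of the local Friedberg--Jacquet integrals, which shows that, as $W_v$ varies, the entire functions $e_v(s, W_v)$ attain nonzero values at any prescribed $s_0 \in \C$; this local nonvanishing at the ramified places is the main obstacle, and once overcome the product formula delivers $\phi$ with the desired global period nonvanishing.
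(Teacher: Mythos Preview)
The paper does not supply its own proof of this theorem: it is quoted from \cite{friedberg:1993} as a known input, so there is no argument in the paper to compare against. Your proposal is a faithful sketch of the Friedberg--Jacquet method itself---introducing the zeta integrals $Z(s,\phi)$, unfolding to the Shalika model, invoking Jacquet--Shalika to identify the Shalika condition with the pole of $L(s,\pi',\Lambda^2)$, factoring Eulerianly, and handling local nonvanishing at the bad places---so in that sense it is ``the same approach'' the paper points to. One small wording issue: in the forward direction you assume a single $\phi$ with both $\mathcal P'(\phi)\ne 0$ and $\mathcal P'_\eta(\phi)\ne 0$, but the hypothesis only gives nonvanishing of each linear form separately (possibly on different vectors); this is harmless, since nonvanishing of $\mathcal P'$ alone already forces the Shalika model and hence (a), after which the two factorizations at $s=1/2$ give $L(1/2,\pi')\ne 0$ and $L(1/2,\pi'\otimes\eta)\ne 0$ from the respective periods.
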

Here $\pi'_E$ denotes  the base change of $\pi'$ to $G'(\A_E)$.
We remark that being of symplectic type on $\GL_{2n}$ is equivalent to being in the image of the functorial
lift from $\SO_{2n+1}$ (see Arthur's book \cite{arthur:book}).

Given $\pi'$, let $\Xefpi$ denote the set of $D \in \Xef$ such that the inverse Jacquet--Langlands
transfer $\pi_D=\LJ_D(\pi)$ of $\pi'$ to $G_D$ exists.  In particular, $M_2 \in \Xefpi$.  When $n=1$, we have
the following well-known theorem of Waldspurger.  In this case, all $\pi'$ are of symplectic type.

\begin{thm}[\cite{waldspurger:1985}] \label{wald-thm} 
Suppose $n=1$.  

\begin{enumerate}
\item Fix $D \in \Xef$ and a cuspidal representation $\pi$ of $G_D(\A)=D^\times(\A)$.  If $\pi$ is
$H$-distinguished, then $\pi' = \JL(\pi)$ is $H'$- and $(H',\eta)$-distinguished.

\item Let $\pi'$ be a cuspidal representation of $G'(\A)=\GL_2(\A)$.  If $\pi'$ is 
$H'$- and $(H',\eta)$-distinguished, then there exists a (unique) $D \in \Xefpi$ such that
$\pi_D = \LJ_D(\pi')$ is $H$-distinguished.
\end{enumerate}
\end{thm}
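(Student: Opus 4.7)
The plan is to approach both parts via Jacquet's relative trace formula comparing $G_D = D^\times$ with $G' = \GL_2$; this is the $n=1$ prototype for the RTF the paper uses in higher rank, and provides a reworking of Waldspurger's original theta-correspondence proof into a form compatible with the paper's methods.

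First I would set up two distributions. On the quaternionic side, for a suitable test function $f$ on $G_D(\A)$, form
\[ J_D(f) = \int_{H(F)Z(\A) \bs H(\A)} \int_{H(F)Z(\A) \bs H(\A)} K_f(h_1, h_2)\, dh_1\, dh_2, \]
where $K_f$ is the usual cuspidal kernel on $G_D(F) \bs G_D(\A)$. On the $\GL_2$ side, for $f'$ on $G'(\A)$, form
\[ J'(f') = \int_{H'(F)Z(\A) \bs H'(\A)} \int_{H'(F)Z(\A) \bs H'(\A)} K_{f'}(h'_1, h'_2)\, \eta(\det h'_2)\, dh'_1\, dh'_2. \]
The next step is to match orbital integrals between the two geometric expansions via a transfer $f \leftrightarrow f'$: the regular $(H,H)$-orbits in $G_D$ and the regular $(H',H')$-orbits in $G'$ are parametrized compatibly by invariants built from reduced traces and norms, so the matching at good places is essentially classical. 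To use the \emph{simple} RTF one imposes local conditions at two auxiliary places forcing only elliptic regular orbits to contribute and restricting to cuspidal spectrum.

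Once the comparison is set, the $\pi$-isotypic part of the spectral expansion of $J_D(f)$ produces terms of the shape $|\mathcal P_D(\pi(f)\phi)|^2/\langle \phi,\phi\rangle$, while the $\pi'$-isotypic part of $J'(f')$ produces $\mathcal P'(\pi'(f')\phi')\overline{\mathcal P'_\eta(\phi')}/\langle \phi',\phi'\rangle$. Pairing these under Jacquet-Langlands $\pi_D \leftrightarrow \pi'$, and combining with the Friedberg-Jacquet criterion cited above, yields part (1): nonvanishing of $\mathcal P_D$ forces simultaneous nonvanishing of $\mathcal P'$ and $\mathcal P'_\eta$, equivalently $L(1/2,\pi')L(1/2,\pi'\otimes\eta)\neq 0$.

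For part (2), I would run the same RTF comparison in reverse, with the choice of $D$ dictated by local considerations. The essential input is the Tunnell-Saito local dichotomy: at each place $v$, exactly one of the two quaternion algebras over $F_v$ containing $E_v$ supports a nonzero $E_v^\times$-invariant functional on the local Jacquet-Langlands transfer of $\pi'_v$, and this choice is detected by the local root number $\eps(1/2,\pi'_v\otimes\eta_v)$ (compared with $\eta_v(-1)$). The product of the prescribed local invariants is $+1$ -- so that a coherent global $D$ exists -- precisely because the functional equation, together with $L(1/2,\pi')L(1/2,\pi'\otimes\eta)\neq 0$, forces the global root number of $\pi'_E$ to be $+1$; uniqueness then follows from strong multiplicity one for quaternionic Jacquet-Langlands. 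The main obstacle throughout is the pair of local inputs -- smooth matching of orbital integrals and the precise root-number computation underlying Tunnell-Saito -- which constitute the technical heart of the argument.
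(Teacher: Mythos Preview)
The paper does not give its own proof of this theorem: it is stated as a known result of Waldspurger, with the remark immediately following that Waldspurger's original argument used the theta correspondence and that Jacquet later reproved it via the relative trace formula. There is therefore nothing in the paper to compare against beyond those two citations.

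Your outline for part (1) is a reasonable sketch of Jacquet's RTF approach, and indeed the paper's higher-rank machinery is modeled on exactly this. For part (2), however, you are not really ``running the same RTF comparison in reverse'': you are instead invoking the Tunnell--Saito local dichotomy to pin down $D$ a priori and then checking global coherence via the root number. That is a legitimate route to both existence and uniqueness, but it is a different argument from the RTF converse. The RTF converse (the $n=1$ prototype of what the paper sketches for $n$ odd in \eqref{rtf-comp-nodd} and Proposition~\ref{main-thm2}) works by summing the quaternionic side over \emph{all} $D \in \Xef$ and deducing that some term must be nonzero; uniqueness then comes separately from the local dichotomy, not from ``strong multiplicity one for quaternionic Jacquet--Langlands,'' which is a statement about representations rather than about which $D$ carries the period. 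So your existence and uniqueness arguments for (2) are both ultimately Tunnell--Saito, and the RTF is not actually doing work there.
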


This result (and a twisted version) was originally proved
by Waldspurger using
the theta correspondence, and then by Jacquet \cite{jacquet:1986}
using the relative trace formula.  In fact, Waldspurger obtained an exact formula  
relating $L(1/2,\pi'_E)$ to $|\mathcal P_D(\phi)|^2$ for any $\phi \in \pi_D$.  This formula,
and its twisted version, has been the subject of much study and has many applications.

One might hope to generalize Waldspurger's result to higher rank groups.  
One such generalization is the orthogonal (Gan--)Gross--Prasad conjectures 
(\cite{gross:prasad1}, \cite{gross:prasad2}, \cite{GGP}) viewing 
Theorem \ref{wald-thm} as a statement about $\SO_3 \times \SO_2$ $L$-values.  
Alternatively, remaining in
the general linear framework, we have the following.

\begin{conj} [Guo--Jacquet \cite{guo:1996a}] 
\begin{enumerate}
\item Fix $D \in \Xef$ and let $\pi$ be a cuspidal representation of $G(\A) = G_D(\A)$.  
If $\pi$ is $H$-distinguished, then $\pi'=\JL(\pi)$ is $H'$- and $(H',\eta)$-distinguished, i.e.,
$\pi'$ is of symplectic type and $L(1/2,\pi'_E) \ne 0$.

\item Suppose $n$ is odd, and $\pi'$ is a cuspidal representation of $G'(\A)$ such 
that $\pi'$ is $H'$- and $(H',\eta)$-distinguished.  Then there exists $D \in \Xefpi$ such that $\pi_D = \LJ_D(\pi')$
is $H$-distinguished.
\end{enumerate}
 \label{guo-conj}
\end{conj}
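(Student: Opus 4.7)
The plan is to prove (under some local hypotheses which will make a \emph{simple} version of the relative trace formula valid) both parts of the conjecture simultaneously by matching two relative trace formulas: one for $(G_D,H_D)$ with the period $\mathcal P_D$, and one for $(G',H')$ with the pair of periods $(\mathcal P',\mathcal P'_\eta)$. For test functions $f_D$ on $G_D(\A)$ and $f'$ on $G'(\A)$, the relevant distributions are
\[
K_{f_D}(h_1,h_2)=\sum_{\gamma\in Z(F)\bs G_D(F)}f_D(h_1^{-1}\gamma h_2),\qquad
K_{f'}(h_1',h_2')=\sum_{\gamma\in Z(F)\bs G'(F)}f'(h_1'^{-1}\gamma h_2'),
\]
integrated against $dh_1\,dh_2$ over $(H_D(F)Z(\A)\bs H_D(\A))^2$ on one side, and against $dh_1'\,\eta(\det h_2')\,dh_2'$ over $(H'(F)Z(\A)\bs H'(\A))^2$ on the other. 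Unfolding should give a spectral expansion whose $\pi_D$-contribution is essentially $|\mathcal P_D|^2$ and whose $\pi'$-contribution is essentially $\mathcal P'\cdot\overline{\mathcal P'_\eta}$, together with orbital integral geometric expansions indexed by $H_D$- (resp.\ $H'\times H'$-) double cosets.

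The next step is to identify compatible orbit spaces. Following Guo, regular semisimple double cosets on both sides are parametrized by the same invariants (essentially conjugacy classes in an appropriate symmetric space), so one has a notion of \emph{matching} $f_D\leftrightarrow f'$ defined by equality of orbital integrals at matching orbits, with a zero-transfer condition at orbits with no counterpart on the other side. Here one needs (i) a fundamental lemma at almost all places, which we will take from Guo's work, and (ii) smooth transfer at the remaining places. To avoid the analytic difficulties of the full trace formula (non-elliptic terms, Eisenstein contributions) we impose local hypotheses: at one place we take a supercuspidal-type test function whose orbital integrals vanish on non-regular-elliptic orbits, and at another we take a function that isolates a given cuspidal representation. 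This truncates the geometric side to regular elliptic orbits and the spectral side to a sum of cuspidal contributions, so the identity of geometric sides yields an identity of cuspidal spectral distributions summed over all $D\in X(E\!:\!F)$ versus their counterparts on $G'$.

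Given this global identity, the conjecture follows by spectrally isolating a single Jacquet--Langlands packet. Part (1): if $\pi=\pi_D$ is $H$-distinguished, choose $f_D$ so that the $\pi_D$-contribution is nonzero, transfer to $f'$; then the matching forces the spectral side on the $G'$ side to be nonzero, and strong multiplicity one (together with the discussion of Jacquet--Langlands transfer recalled in the introduction) picks out $\pi'$, so $\mathcal P'\cdot\overline{\mathcal P'_\eta}\not\equiv 0$ on $\pi'$; hence $\pi'$ is simultaneously $H'$- and $(H',\eta)$-distinguished and by Friedberg--Jacquet is of symplectic type with $L(1/2,\pi'_E)\neq 0$. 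Part (2): if $\pi'$ is simultaneously $H'$- and $(H',\eta)$-distinguished, pick $f'$ isolating $\pi'$ and nonzero on the $(\mathcal P',\mathcal P'_\eta)$-distribution, transfer to a family $(f_D)_{D\in X(E\!:\!F\!:\!\pi')}$; the resulting geometric identity forces some $D$ in this family to contribute, and the contributing $\pi_D$ must be $H$-distinguished.

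The main obstacle is the analytic one: making the simple trace formula genuinely work, i.e.\ justifying that the chosen local test functions kill all non-regular-elliptic orbital integrals and all non-cuspidal spectral contributions, and that there exists a transfer with the claimed matching of orbital integrals (including smooth transfer at ramified places). The fundamental lemma at good places reduces to Guo's result, and the vanishing of unipotent/hyperbolic orbital integrals is engineered by the local hypotheses, but verifying the needed vanishing of mixed terms and the existence of sufficiently many matching functions to separate individual $\pi'$ is the technical heart of the argument and will be where the local hypotheses in the theorem statement come from. A secondary subtlety, present only in part (2), is that the geometric identity naturally gives a sum over $D\in X(E\!:\!F\!:\!\pi')$, so one still must argue that some individual $D$ contributes nontrivially; for $n$ odd, a local sign/dichotomy argument (parallel to Tunnell--Saito in the $n=1$ case) should select such a $D$.
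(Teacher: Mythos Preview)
This statement is a \emph{conjecture}, not a theorem: the paper does not claim a full proof.  What the paper does prove is part~(1) under local hypotheses (Theorem~\ref{main-thm1}) and a \emph{conditional} version of part~(2) (Proposition~\ref{main-thm2}).  Your outline is essentially the strategy the paper follows for these partial results, so in that sense you have the right picture.  Two points where your sketch diverges from what actually happens deserve comment.

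First, the spectral separation is subtler than ``strong multiplicity one picks out $\pi'$.''  The fundamental lemma for the full Hecke algebra is \emph{not} known here; the paper only varies test functions in the Hecke algebra at places $v$ \emph{split} in $E$.  Linear independence of characters then separates representations only up to near-equivalence at split places, and Ramakrishnan's result says this determines $\pi'$ only up to twist by $\eta$.  So the identity one gets is
\[
I_{\pi}(f)+I_{\pi\otimes\eta}(f)=I'_{\pi'}(f')+I'_{\pi'\otimes\eta}(f'),
\]
and an extra argument (Lemma~\ref{Bpi-pm-lem}) is needed to guarantee the left side can be made nonzero, i.e.\ that $I_\pi$ and $I_{\pi\otimes\eta}$ do not cancel.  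Your sketch omits this $\eta$-twist issue entirely.

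Second, your last paragraph on part~(2) is off.  Once you have $\sum_D I_{D,\pi_D}(f_D)=$ (nonzero), it is immediate that some $D$ contributes; no dichotomy or sign argument is needed for existence.  (Dichotomy would be relevant to \emph{uniqueness} of $D$, which is a different, still-conjectural statement.)  The real obstruction to part~(2) in the paper is earlier: one needs the converse transfer $f'\mapsto (f_D)_D$ together with the ability to choose $f'$ so that $B_{\pi'_v}(f'_v)\ne 0$ while staying in the class of functions for which matching is known.  For the forward direction the paper proves that supercuspidal $H$-distinguished $\pi_v$ are $H$-elliptic (Proposition~\ref{ell-supp-prop}), which makes elliptically-supported $f_v$ usable; the analogous statement for $B_{\pi'_v}$ is not established, which is why part~(2) remains conditional.
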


Guo \cite{guo:1996a} established the fundamental lemma for the
unit element of the Hecke algebra for a relative trace formula
to attack this conjecture.  

We remark that the condition of $n$ odd for (2) of the
conjecture arises in a difference between the geometric decompositions of this trace formula
in the $n$ odd and $n$ even case.  Spectrally, the difference is related to 
the sign in the character identity $\chi_{\pi_v} = (-1)^n \chi_{\pi_v'}$ of the local Jacquet--Langlands correspondence.  

The correspondence of geometric orbits
in the trace formula suggests the $D$ in (2) may be unique when $n$ is odd.
This suggests a
local dichotomy principle as in the Gross--Prasad situation, i.e., 
when $G_v$ is nonsplit, at most one of $\pi_v$ and 
$\pi'_v$  is locally $H(F_v)$- or $H'(F_v)$-distinguished (has a nonzero $H(F_v)$- or $H'(F_v)$-invariant linear form).

Local dichotomy when $n$ is odd would also be consistent with spectral
expectations.  Let $K/k$ be a quadratic extension of local fields  and $D(k)$ the quaternion division algebra over $k$.

\begin{conj} \cite[Conjecture 1]{PTB} \label{PTB-conj}  Let $\tau$ and $\tau'$ be irreducible admissible representations of $\GL_n(D(k))$ and $\GL_{2n}(k)$ which correspond via
Jacquet--Langlands.  If $\tau$ (resp.\ $\tau'$) is $\GL_n(K)$-distinguished, 
then $\tau$ (resp.\ $\tau'$) is symplectic
and $\epsilon(1/2, \tau_{K}) = (-1)^n$ (resp.\ $\epsilon(1/2, \tau'_{K}) = 1$).  Moreover, these conditions are sufficient
for distinction if $\tau$ (resp.\ $\tau'$) is  discrete series.
\end{conj}

In fact, \cite{PTB} gives a more general conjecture and proves
 the $n=2$ case with the theta correspondence.  Symplectic here means the local Langlands parameter has 
symplectic image in $\GL_{2n}(\C)$.

This conjecture implies that (i) when $n$ is odd, at most one of $\tau$ and $\tau'$ are locally $\GL_n(K)$-distinguished; and (ii) when $n$ is even and $\tau$ is discrete series, then $\tau$ is locally $\GL_n(K)$-distinguished if and only if $\tau'$ also is.  Hence, at
least for discrete series representations, one should have local dichotomy precisely when $n$ is odd.

On the other hand, one might ask for an analogue of (2) when $n$ is even.
Here it appears that extra conditions are needed to get $D$ with $\pi$ distinguished (cf.\
\cite{PTB}, \cite{FM-conj}).  Moreover, when such a $D$ exists, it need not be 
unique---this is suggested by (ii) and we will prove this below.
We hope to discuss an analogue of (2) for $n$ even in future work.

\subsection{Main results}
Building on Guo's work, we establish a simple relative trace formula to prove our
main global result.

\begin{thm} \label{intro-main-thm}
Suppose $E/F$ is split at all archimedean places.
Assume $\pi$ is supercuspidal at some place
which splits in $E$ and $H$-elliptic at another place.  Then 
Conjecture \ref{guo-conj}(1) holds, 
i.e., if $\pi=\pi_D$ is $H$-distinguished, then $L(1/2, \pi'_E) \ne 0$
and $\pi'$ is symplectic.

\end{thm} 

This is Theorem \ref{main-thm1} below.  
The condition of being $H$-elliptic at some place means that the associated
local Bessel distribution is nonzero on the ``$H$-elliptic'' set (see Section 
\ref{ell-supp-sec}).  This condition holds for many representations and will be discussed 
momentarily.

One might also hope to show the converse, Conjecture \ref{guo-conj}(2), when
$n$ is odd.  This is more difficult due to the nature of the geometric correspondence in
the relative trace formula.
Still, in Proposition \ref{main-thm2} we prove a converse
result under some hypotheses, for $n$ even or odd, though the hypotheses are weaker
when $n$ is odd.

Our final global result, Theorem \ref{main-thm3},
gives sufficient conditions for an $H$-period of $\pi_{D_1}$
to transfer to an $H$-period of $\pi_{D_2}$, for two $D_1, D_2 \in \Xefpi$, when $n$ is even.
This tells us that when an analogue of Conjecture \ref{guo-conj}(2) for $n$ even holds,
the $D$ should not be unique.

\medskip
Since being globally distinguished implies
being locally distinguished at each place, an appropriate global embedding result for locally distinguished 
representations allows us to  conclude the following local results.

Let $K/k$ be a quadratic extension of $\mathfrak p$-adic fields, $\eta_{K/k}$
the associated quadratic character, and $D(k)$ the quaternion division
algebra over $k$.  

Put $H(k) = \GL_n(K)$ and $H'(k) = \GL_n(k) \times \GL_n(k)$.  

\begin{thm} \label{intro-local-thm}
Let $\tau$ be a supercuspidal representation of $\GL_n(D(k))$ and $\tau'$ its
Jacquet--Langlands transfer to $\GL_{2n}(k)$.

(a) If $\tau$ is $H(k)$-distinguished, then $\tau'$ is both $H'(k)$- and
$(H'(k),\eta_{K/k})$-distinguished.

(b) Suppose $n$ is even and $\tau'$ is also supercuspidal.  If one of
$\tau$ and $\tau'$ is both $H(k)$-distinguished and $H(k)$-elliptic, then the other also is.
\end{thm}

This is contained in Theorems \ref{local-thm1} and \ref{local-thm2} below, and establishes part of consequence (ii) of Conjecture \ref{PTB-conj} under an additional elliptic assumption.

Using a similar idea to \cite{prasad:duke}, we also obtain one direction of Conjecture \ref{PTB-conj} for  supercuspidal representations of $\GL_{2n}$:

\begin{thm} \label{intro-local-thm2} 
Let $\tau'$ be a supercuspidal representation of $\GL_{2n}(k)$.
If $\tau'$ is $\GL_n(K)$-distinguished, then
$\tau'$ is symplectic and $\epsilon(1/2, \tau'_K) =1$.
\end{thm}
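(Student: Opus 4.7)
The plan is to globalize $\tau$ to a cuspidal automorphic representation $\pi'$ of $\GL_{2n}(\A_F)$ over a number field $F$ with a quadratic extension $E/F$ chosen to match $K/k$ at a designated place $v_0$, and then invoke Theorem \ref{intro-main-thm}(a) in the case $D = M_2$ (so that $\pi_D = \pi'$) to conclude that $\pi'$ is of symplectic type with $L(1/2, \pi'_E) \neq 0$. The local claims on $\tau$ will follow by descent, with the epsilon factor extracted via the product formula.

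For the globalization, I would select a number field $F$ with a finite place $v_0$ satisfying $F_{v_0} \cong k$, together with a quadratic extension $E/F$ satisfying $E_{v_0} \cong K$, split at all archimedean and even places (as required by Theorem \ref{intro-main-thm}), and unramified at every other finite place of $F$. Then, using standard globalization techniques, I would produce a cuspidal $\pi'$ on $\GL_{2n}(\A_F)$ with $\pi'_{v_0} \cong \tau$, supercuspidal at some auxiliary place $v_1$ split in $E$ (so that the hypotheses of Theorem \ref{intro-main-thm}(a) apply), and unramified at every finite inert place of $E/F$ other than $v_0$. These extra local conditions serve both to set up the trace formula argument and to make the later epsilon-factor computation collapse to a single place.

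The main obstacle is showing that this $\pi'$ is globally $\GL_n(E)$-distinguished. Local $\GL_n(K)$-distinction of $\tau$ supplies a nonzero local invariant linear form at $v_0$; using the supercuspidality of $\tau$ at $v_0$ and of $\pi'$ at $v_1$, one picks a decomposable test function whose local relative orbital integral at $v_0$ pairs nontrivially with this linear form, while the choice at $v_1$ isolates $\pi'$ on the spectral side. Feeding this test function into the simple relative trace formula used to prove Theorem \ref{intro-main-thm} should force $\mathcal{P}_{M_2}$ to be nonzero on $\pi'$. This supercuspidal ``local implies global distinction'' step, executed via the trace formula in the spirit of Jacquet's proof of Waldspurger's theorem, is where the real work lies.

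Granted global distinction, Theorem \ref{intro-main-thm}(a) gives $\pi'$ symplectic and $L(1/2, \pi'_E) \neq 0$. Symplecticity is a local condition on the Langlands parameter, hence descends, and $\tau = \pi'_{v_0}$ is symplectic. Furthermore, $\pi'_E$ is self-dual (since $\pi'$ is), so nonvanishing of $L(1/2, \pi'_E)$ forces $\epsilon(1/2, \pi'_E) = 1$. Decomposing via the product formula $\epsilon(1/2, \pi'_E) = \prod_v \epsilon(1/2, \pi'_{E,v})$ for a compatible choice of global additive character, each factor at $v \neq v_0$ is trivial: at places split in $E/F$ (including all archimedean and even places) the local epsilon factors as $\epsilon(1/2, \pi'_v)^2$, which is $1$ by self-duality of $\pi'_v$; at the remaining finite inert places, both $\pi'_v$ and $E_v/F_v$ are unramified by construction, forcing local epsilon $= 1$. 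Hence $\epsilon(1/2, \tau_K) = \epsilon(1/2, \pi'_{E,v_0}) = 1$, completing the proof.
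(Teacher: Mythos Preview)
Your approach is essentially the same as the paper's: globalize to an $H$-distinguished cuspidal $\pi'$ with controlled ramification, apply Theorem~\ref{intro-main-thm}(a), and read off the local symplecticity and root number via the product formula. The epsilon-factor bookkeeping you give is correct and matches the paper's.

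One clarification on the step you flag as the ``main obstacle''. As written, you first globalize $\tau$ to some cuspidal $\pi'$ and then try to prove that \emph{this particular} $\pi'$ is $\GL_n(E)$-distinguished by a trace-formula argument. That does not work: once $\pi'$ is fixed, no choice of test function can force $\mathcal P_{M_2}$ to be nonzero on it---the relative trace formula only shows that \emph{some} cuspidal representation with the prescribed local behavior is distinguished. The paper avoids this by quoting a globalization theorem of Hakim--Murnaghan (in the form of \cite[Thm~4.1]{PSP}), which is itself proved by a simple relative trace formula and directly outputs a globally $H$-distinguished $\pi'$ with $\pi'_{v_0}\cong\tau$, $\pi'_{v_1}$ supercuspidal at a split place, and $\pi'_v$ unramified at all other finite places. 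Your sketch in the ``main obstacle'' paragraph is essentially this argument; just merge the two steps into one and let the trace formula \emph{produce} the distinguished $\pi'$ rather than verify distinction of a pre-chosen one.
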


Let us now discuss the global result in more detail.

We expect that our approach to Theorem \ref{intro-main-thm}
should lead to a formula for the $L$-value $L(1/2, \pi'_E)$
in terms of the square periods $|\mathcal P_D(\phi)|^2$, as in Waldspurger's case.  This was
carried out with a relative trace formula for $n=1$ by Jacquet--Chen \cite{jacquet:2001} and
the latter two authors \cite{me:periods}.
In higher rank, Wei Zhang \cite{wei2} also used a simple relative trace formula to obtain an $L$-value formula in the setting of the unitary Gan--Gross--Prasad conjectures under some local hypotheses.   

When $n=2$, this theorem can be thought of as a relation between the nonvanishing  of certain periods and the
nonvanishing of central spinor $L$-values for $\GSp(4)$.
One direction of the $\SO(5) \times \SO(2)$ case of Gross--Prasad says the nonvanishing of these
$L$-values should also be detected by Bessel periods.  This is now known, e.g.,
\cite{furusawa:morimoto}.  See \cite{FM-conj} for a
discussion of the comparison between Bessel periods and $\GL(n,E)$ periods.

The idea of proof is similar to some other recent works such as \cite{jacquet:martin}, \cite{furusawa:martin} and \cite{wei1}.  Before outlining the proof, let us highlight a couple of differences
from these other works.  First, unlike \cite{jacquet:martin} and \cite{furusawa:martin},
this is valid in higher rank and 
we use Ramakrishnan's mild Chebotarev result for $\GL(n)$ \cite{ramakrishnan:mult1} 
to avoid the need
of the full fundamental lemma for the Hecke algebra.  These are also features of
\cite{wei1}, which was completed while we were finishing this project.  Second,
we need to show we can choose a test function which has
$H$-elliptic support.  There is no need for this in the cases treated by
\cite{furusawa:martin} and \cite{wei1}, as the orbital integrals converge for a
dense set of elements.  In \cite{jacquet:martin}, this type of result was established
at an archimedean place for $\GL_2(D)$ via explicit Lie algebra calculations.
Here we impose this condition as the $H$-elliptic local hypothesis, but we
expect this to hold for most $H$-distinguished representations.  As evidence, we give
a local proof of the existence of local $H$-elliptic supercuspidals when $E_v=\Q_2\oplus\Q_2$ (Proposition \ref{ell-supp-prop}), and use this to give a
global proof of the existence of  local $H$-elliptic  
representations for any local quadratic extension (Proposition \ref{ell-supp-cor}).  These types of results and proof appear
somewhat novel.  Some results in a similar vein have recently been obtained in other cases, 
such as \cite[Proposition 9.6]{jacquet:martin} and \cite{czhang:orthog}, 
but these have very different proofs.  
Related results in the group case have been well known for some time, 
e.g.\ \cite[Proposition 2.7]{rogawski:1983}.

\subsection{Outline of method}
Now we outline the proof of Theorem \ref{intro-main-thm}.
Fix $D \in \Xef$.
The trace formula on $G_D$ is an expression of the form
\begin{equation}
 I_D(f_D) = \sum_{\gamma_D} I_{D,\gamma_D}(f_D) = \sum_{\sigma_D} I_{D,\sigma_D}(f_D),
\end{equation}
where $f_D$ is a certain test function on $G_D(\A)$, $I_{D,\gamma_D}$ are certain orbital integrals
indexed by double cosets $H(F) \bs G_D(F)/ H(F)$,
and $I_{D,\sigma_D}$ are certain spectral distributions, which for $\sigma_D = \pi_D$
 involves $\mathcal P_D$ and $\overline{\mathcal P_D}$.  A key point is that $I_{D,\pi_D} \not
 \equiv 0$ if and only if $\pi_D$ is $H$-distinguished.
 Similarly, we have a trace formula on $G'$ of the form
\begin{equation}
 I'(f') = \sum_{\gamma'} I'_{\gamma'}(f') = \sum_{\sigma'} I'_{\sigma'}(f').
\end{equation}
Here $I'_{\pi'} \not \equiv 0$ if and only if $\pi'$ is $H'$- and $(H',\eta)$-distinguished.

These trace formulas will not be convergent in general, but if we pick $f_D = \prod f_{D,v}$ and 
$f' = \prod f'_v$ so that at one place they are supported on ``regular elliptic'' double cosets and at
another place they are matrix coefficients of a supercuspidals, then both sides converge absolutely.  

One defines a correspondence among regular elliptic double cosets
$\gamma_D$ and $\gamma'$, and thus a
notion of matching functions $f_D$ and $f'$ in the sense that $I_{D,\gamma_D}(f_D) =
I'_{\gamma'}(f')$ for matching $\gamma_D$ and $\gamma'$.  Here each $\gamma_D$
corresponds to a unique $\gamma'$, and no two double cosets $\gamma_D$ correspond
to the same $\gamma'$ (for a fixed $D$ when $n$ is even, or among all $D$'s when $n$ is odd).   
However not all $\gamma'$'s correspond to a 
$\gamma_D$ when $n$ is even.  If $I'_{\gamma'}(f') = 0$ for such ``bad'' $\gamma'$,
then for $f'$ matching an $f_D$ we can write
\begin{equation} \label{rtf-comp-neven}
 I_D(f_D) = I'(f')
\end{equation}
for $n$ even.  When $n$ is odd, there are no such bad (elliptic) $\gamma'$, but a single $f'$ should
match with a family $(f_{D'})$ as $D'$ ranges over $\Xef$, and for such test functions we will
have
\begin{equation} \label{rtf-comp-nodd}
 \sum_{D' \in X(E:F)} I_{D'}(f_{D'}) =  I'(f'). 
\end{equation}
This should give the reader some sense of the differences between the $n$ even and
$n$ odd case for Conjecture \ref{guo-conj}(2). 
Since we are just proving the other direction, we may take $f'$ so that
$I_{D'}(f_{D'}) = 0$ for all $D' \ne D$.  Thus we may work with \eqref{rtf-comp-neven}
in both the $n$ odd and $n$ even cases.

Now the standard thing to do is use the fundamental lemma for the Hecke algebra 
and linear independence of characters to deduce that 
$I_{D,\pi_D}(f) = I_{\pi'}(f')$. 
In our setup we do not yet know the
fundamental lemma for the Hecke algebra, but it holds trivially at places where $E/F$ splits.  Instead, we use a result of Ramakrishnan \cite{ramakrishnan:mult1} which 
tells us that if two representations $\sigma_1'$ and $\sigma_2'$ of $\GL_{2n}$ are locally
equivalent at almost all places where $E/F$ splits, then $\sigma_2'$ is isomorphic to either
$\sigma_1'$ or $\sigma_1' \otimes \eta$.  This yields
\begin{equation} \label{Ipi-comp}
 I_{D, \pi_D}(f_D) + I_{D,\pi_D \otimes \eta}(f_D)= I_{\pi'}(f') + 
I_{\pi' \otimes \eta}(f').
\end{equation}

The main point is to show that we have sufficiently many pairs of matching functions
$(f_D)_D$ and $f'$, which reduces to a question of proving the existence of local matching
functions, as our geometric orbital integrals factor into local ones.  
Local matching comes for free when $E_v/F_v$ is split, so we may just consider
local matching at inert places.  

When $f_{D,v}$ and $f'_v$
are the unit elements of the Hecke algebra, this is the fundamental lemma proved by Guo
\cite{guo:1996a} (at almost all places).  
In Section \ref{orbint-sec}, we prove the existence of matching functions 
supported on both certain dense and elliptic subsets of $G_D$ and $G'$.  Then in Section 
\ref{bess-sec}, we use
an extension of another result of Guo \cite{guo:spherical} on local integrability of
 local Bessel distributions of $G'$ to say that at odd places
it is enough to just consider functions $f'$ supported on dense subsets of $G'$ at odd
nonarchimedean places.  Here is where the assumption about being split at
archimedean places arises.  (We  use \cite{chong-zhang} to treat even places.)

The $H$-elliptic condition allows us to choose test functions that guarantee the convergence of
the geometric sides, whereas the supercuspidal condition allows us to choose functions
that give convergence of the spectral sides.
 
 This matching is now enough to get Theorem \ref{intro-main-thm}, by showing that
$I_{\pi_D \otimes \eta}(f_D)$ cannot cancel out $I_{\pi_D}(f_D)$ for all such $f_D$, i.e., the left
hand side of \eqref{Ipi-comp} is nonzero for some $f_D$ if $\pi_D$ is $H$-distinguished,
i.e., if $I_{\pi_D} \not \equiv 0$.

\subsection{Further remarks}
After we completed an earlier draft, Chong Zhang
\cite{chong-zhang} used an idea of Wei Zhang \cite{wei1} to get a smooth transfer result of the form:
each $f_{D,v}$ has a matching $f'_v$ for a nonarchimedean place $v$.  
While we use this at even places, one could also use this at all nonarchimedean places
instead of our restricted smooth matching results in Section \ref{orbint-sec}.
However, we hope that our original approach may still be of interest as: (i) proving
restricted smooth matching is much simpler than a full smooth matching result; (ii) 
it involves reducing relative orbital integrals to orbital integrals in \cite{arthur:1989} on lower
rank groups, which may indicate an interesting connection between the trace formula
in \cite{arthur:1989} and the relative trace formula here; and (iii) this approach may be useful
in other situations where a complete smooth matching result is not known---e.g., the
archimedean situation here or cases involving other groups.

\subsection*{Acknowledgements}
We are grateful to Masaaki Furusawa, Herv\'e Jacquet, Fiona Murnaghan, Dipendra Prasad, Dinakar Ramakrishnan and Alan Roche for helpful discussions.
We thank Chong Zhang for pointing out a gap in an earlier version,
and  referees for useful comments.
The first author was partially supported by National Science Foundation  grant DMS-1201446, National Security Agency grant H98230-16-1-0017  and PSC-CUNY. 
The second author was partially supported by Simons Foundation Collaboration Grant 240605.  The second and third authors were partially supported by National Science Foundation grant DMS-0758197.

%
%

\section{Notation}\label{notation-sec}

%
%

Either $F$ is a number field (Sections \ref{srtf-sec} and \ref{main-sec}) 
or a local field  (Sections \ref{orbint-sec} and \ref{bess-sec}), and $E$
is a quadratic \'etale extension of $F$, i.e., either a quadratic field extension
or, in Sections \ref{bess-sec} and \ref{srtf-sec}, possibly the split
algebra $E=F \oplus F$.   
 In the global (resp.\ local) case,
$\eta$ denotes the quadratic character of $F^\times\bs\A^\times$ (resp.\ $F^\times$) corresponding to $E/F$ by class field theory. 

We denote the norm map from $E$ to $F$ by $N$. For an element $\alpha \in E$ we let $\bar \alpha$ denote the image of $\alpha$ under the non-trivial element of $\Gal(E/F)$. We use the same notation for elements in $M_n(E)$. 
Denote by $I_n$ the $n\times n$ identity matrix.

We set $G' = \GL_{2n}$, viewed as an algebraic group over $F$, and we let $H'= \GL_n \times \GL_n$, which we view as a subgroup of $G'$ via the embedding
\[
(A_1, A_2) \mapsto \bmx A_1&0\\0&A_2 \emx.
\]
For $\eps \in F^\times/(NE^\times )$, set  
\[
G_\varepsilon=\left\{ \bmx \alpha & \eps \beta \\ \bar \beta & \bar \alpha \emx \in \GL_{2n}(E) : \alpha, \beta \in M_n(E) \right\}
\]
 and $H_\varepsilon$ to be the image of $\GL_n(E)$ in the block diagonal subgroup  of $G_\eps$.  
Note $G_\eps \cong G_{D_\eps} := \GL_n(D_\eps)$ where $D_\eps$ is the associated
quaternion algebra
\[ 
D_\eps=\left\{ \bmx a & \eps b \\ \bar b & \bar a \emx \in \GL_2(E) : a, b \in E \right\}.
\]
If $\eps$ is fixed, we often write  $G$ (resp. $H$) for $G_\eps$ (resp. $H_\eps$).

%
%

\section{Orbital integrals}  \label{orbint-sec}

%
%

In this section we prove the existence of matching  functions for our relative trace formulas.
A more complete matching result is now known by C.\ Zhang \cite{chong-zhang}, but
was not available at the original writing of this paper.  In addition, our approach may still
be of interest as it is more elementary and may be useful for other situations, e.g., the
archimedean case for the situation at hand.

The idea for our approach
is to translate the matching in our case to  matching between  orbital integrals over conjugacy classes on $\GL_n$ and twisted orbital integrals for $\GL_n$ over a quadratic extension. Matching in this case is known by work of Arthur and Clozel \cite{arthur:1989}, and we are able to deduce
the existence of a large class of matching functions from their work.  Throughout this section,  $F$ is a local nonarchimedean field of characteristic $0$ and $E$ is a quadratic field extension of $F$.  While working locally, we often denote the $F$-rational points
of an algebraic group $\mathbb G$ over $F$ simply by $\mathbb G$.

First we recall  results of Guo \cite{guo:1996a}  on the matching of double cosets. 
Let
\[
w = \bmx I_n&0\\0&-I_n \emx.
\]
We consider the automorphism $\theta$ of $G'$ of order 2 given by $\theta(g) =w^{-1} g w$. Then $H'$ is the set of fixed points of $\theta$. Let $S'$ be the variety
\[
S' = \left\{ g \theta(g)^{-1} : g \in G' \right\},
\]
and let $\rho : G' \to S' : g \mapsto g \theta(g)^{-1}$. The group $G'$ acts on $S'$ by twisted conjugation,
\[
g\cdot s := g s \theta(g)^{-1},
\]
so $H'$ acts on $S'$ by ordinary conjugation.
With this action we have
\[
\rho(x g h) = x \cdot \rho(g), \qquad x, g \in G', \, h \in H'.
\]
Hence $\rho$ induces an isomorphism of $G'$-spaces between the symmetric space $G'/H'$ (with $G'$ acting by left translation) and $S'$. We define
\[
\Gamma(S') = \left\{ H'\text{-conjugacy classes $[s]$ in } S' \right\},
\]
where $[s] = H' \cdot s$.
Then the set $H' \bs G' / H'$ of $H'$ double cosets in $G'$ is identified with $\Gamma(S')$. 
We set
\[
\Gamma^{\semi}(S') = \left\{ \text{semisimple } H'\text{-conjugacy classes in } S'\right\}.
\]
We define an element $s\in S'$ to be {\it ($\theta$-)regular} if $s$ is semisimple and $[s]$ in $\Gamma(S')$ has maximal dimension among the elements in $\Gamma(S')$. We denote this set by $S'^{\reg}$. We define an element $s\in S'$ to be {\it ($\theta$-)elliptic (regular)} if $s$ is regular and the centralizer of $s$ in $H'$ is an elliptic torus.  We denote this set by $S'^\ell$.  Then  we define
\[
\Gamma^{\reg}(S') = \left\{ [s] \in \Gamma(S') : s \in S'^{\reg} \right\},
\quad
\Gamma^{\ell}(S') = \left\{ [s] \in \Gamma(S') : s \in S'^{\ell} \right\},
\]
\[
G'^{\reg}=\{g \in G' : \rho(g) \in S'^{\reg}\},
\quad
\text{and}
\quad
G'^{\ell}=\{g \in G' : \rho(g) \in S'^{\ell}\}.
\]
Correspondingly, we call an $H'$ double coset of $G'$ regular or elliptic if the associated
$H'$-class in $S'$ is.
We let
$\Gamma_{H'}^{\reg}(G')$ (resp.\ $\Gamma_{H'}^{\ell}(G')$)  denote the set of regular (resp.\  elliptic) $H'$ double cosets in $G'$. We take $\Gamma^{\reg}(\GL_n(F))$ (resp.\ $\Gamma^{\ell}(\GL_n(F))$) to be the regular (resp.\ elliptic) semisimple conjugacy classes in $\GL_n(F)$.  (Note when we say regular (resp.\ elliptic), we mean
$\theta$-regular (resp.\ $\theta$-elliptic) double cosets if we are talking about $G'$ and
regular (resp.\ elliptic) in the usual sense if we are talking about $\GL_n(F)$.)  

For $A \in M_n$, let
\[
g'(A)=\bmx I_n &  A \\ I_n & I_n \emx \in G'.
\] 
By \cite[Lemma 1.3]{guo:1996a},
\begin{align} \label{eqn:reg'}
\Gamma_{H'}^{\reg}(G') = \left\{ [g'(A)] : A \in \Gamma^{\reg}(\GL_n(F)), I_n - A \in \GL_n(F) \right\}
\end{align}
and
\[
\Gamma_{H'}^{\ell}(G') = \left\{ [g'(A)] : A \in \Gamma^{\ell}(\GL_n(F)), I_n - A \in \GL_n(F) \right\},
\]
where $[g']=H'g'H'$ for $g' \in G'$.

Now we look at the double cosets on $G_\eps$ for a fixed $\eps \in F^\times$. Let $\tau \in F^\times$ such that $E=F(\sqrt{\tau})$, let
\[
w_\eps = \bmx \sqrt{\tau}I_n&0\\0&-\sqrt{\tau}I_n \emx \in G_\eps,
\]
and let $\theta_\eps$ denote the automorphism of $G_\eps$ defined by $\theta_\eps(g) = w_\eps g w_\eps^{-1}$. As before $H_\eps$ is the set of fixed points of $\theta_\eps$ and we define
\[
S_\eps = \left\{ g \theta_\eps(g)^{-1} : g \in G_\eps \right\}.
\]
Then $G_\eps$ acts on $S_\eps$ by twisted conjugation,
\[
g\cdot_\eps s := g s \theta_\eps(g)^{-1}.
\]
Define $\rho_\eps : G_\eps \to S_\eps$ by $\rho(g) = g \theta_\eps(g)^{-1}$, which identifies $G_\eps / H_\eps$ with $S_\eps$ as $G_\eps$-spaces. In particular  $H_\eps \bs G_\eps / H_\eps$ is identified with the set of $H_\eps$-conjugacy classes in $S_\eps$.

Define $\Gamma(S_\eps)$, $\Gamma^{\semi}(S_\eps)$, $\Gamma^{\reg}(S_\eps)$, 
$\Gamma^{\ell}(S_\eps)$, $S_\eps^{\reg}$ and $S_{\eps}^{\ell}$ similar to above. Let
\[
G_\eps^{\reg}=\{g \in G_\eps : \rho_\eps(g) \in S_\eps^{\reg}\}
\quad
\text{and}
\quad
G_\eps^{\ell}=\{g \in G_\eps : \rho_\eps(g) \in S_\eps^{\ell}\}.
\]
 Also define $\Gamma_{H_\eps}^{\reg}(G_\eps)$ and $\Gamma_{H_\eps}^{\ell}(G_\eps)$ similarly to the case of $G'$.

We say $g_1, g_2 \in \GL_n(E)$ are twisted conjugate if there exists $g \in \GL_n(E)$ such that $g_2=gg_1\bar g^{-1}$. Let $\Gamma^{\twist}(\GL_n(E))$ denote the set of twisted conjugacy classes in $\GL_n(E)$.   By  \cite[Lemma 1.1]{arthur:1989} there is an injective norm map
\[
\mathcal N : \Gamma^{\twist}(\GL_n(E)) \to \Gamma(\GL_n(F))
\]
defined as follows. Let $A \in \GL_n(E)$. Then $A\bar{A} \in \GL_n(E)$ is conjugate in $\GL_n(E)$ to an element $B \in \GL_n(F)$, which is unique up to conjugation in $\GL_n(F)$. One defines $\mathcal NA$ as the conjugacy class of $B$ in $\GL_n(F)$. 

We say  $g \in \GL_n(E)$ is  regular (resp.\  elliptic) twisted if $\mathcal Ng$ is regular (resp.\ regular elliptic) in $\GL_n(F)$.
Let $\Gamma^{\reg, \twist}(\GL_n(E))$ (resp.\ $\Gamma^{\ell, \twist}(\GL_n(E))$) denote the set of  regular (resp.\ elliptic) twisted  conjugacy classes in $\GL_n(E)$. 
 For $A \in M_n(E)$  let
\[
 g_\eps(A)=\bmx I_n & \eps A \\ \bar A & I_n \emx.
 \]
 Then, by \cite[Lemma 1.7]{guo:1996a},
\begin{align} \label{eqn:reg}
\Gamma_{H_\eps}^{\reg}(G_\eps) = \left\{ [g_\eps(A)] : A \in \Gamma^{\reg, \twist}(\GL_n(E)), I_n - \eps A \bar{A} \in \GL_n(E) \right\}
\end{align}
and
\[
\Gamma_{H_\eps}^{\ell}(G_\eps) = \left\{ [g_\eps(A)] : A \in \Gamma^{\ell, \twist}(\GL_n(E)), I_n - \eps A \bar{A} \in \GL_n(E) \right\}
\]
where $[g_\eps] = H_\eps g_\eps H_\eps$ for $g_\eps \in G_\eps$.

We have defined varieties $S_\eps \subset G_\eps \subset G'(E)$ and $S'\subset G' \subset G'(E)$.  By \cite[Proposition 1.3]{guo:1996a}, given a semisimple element $s \in S_\eps$ there exists $h \in H_\eps$ such that $h^{-1} s h \in S'$. This yields an embedding,
\[
\iota_\eps : \Gamma^{\semi}(S_\eps) \hookrightarrow \Gamma^{\semi}(S').
\]
According to \cite[page 117]{guo:1996a} this extends to an embedding $\Gamma(S_\eps) \hookrightarrow \Gamma(S')$.  
The map $\iota_\eps$ gives an injection of $\Gamma^{\reg}(S_\eps)$ into $\Gamma^{\reg}(S')$, and of $\Gamma^{\ell}(S_\eps)$ into $\Gamma^{\ell}(S')$.

 The injection $\iota_\eps$ induces an embedding,
\[
\iota_\eps : \Gamma_{H_\eps}^{\reg}(G_\eps) \hookrightarrow \Gamma_{H'}^{\reg}(G')
\]
by
\begin{equation} \label{eqn:iotae-def}
\iota_{\eps}([g_\eps(A)]) = [g'(\eps \mathcal NA)].
\end{equation}
and thus by restriction
$\iota_\eps : \Gamma_{H_\eps}^{\ell}(G_\eps) \hookrightarrow \Gamma_{H'}^{ \ell}(G').$

When $n$ is odd, by \cite[Lemma 1.8]{guo:1996a},
\begin{equation}\label{eqn:iotanodd}
\Gamma_{H'}^{\ell}(G')=\bigsqcup_{\eps \in F^\times/NE^\times}\iota_{\eps}\left(\Gamma_{H_\eps}^{\ell}(G_\eps)\right)
\end{equation}
and 
\begin{equation}  \label{eqn:iotanodd-reg}
\bigsqcup_{\eps \in F^\times/NE^\times}\iota_{\eps}\left(\Gamma_{H_\eps}^{\reg}(G_\eps)\right)\subset \Gamma_{H'}^{\reg}(G').
\end{equation}
When $n$ is even
\begin{equation}\label{eqn:iotaneven}
\iota_{\eps_1}\left(\Gamma_{H_{\eps_1}}^{\ell}(G_{\eps_1})\right)=\iota_{\eps_2}\left(\Gamma_{H_{\eps_2}}^{\ell}(G_{\eps_2})\right)
\end{equation}
for any $\eps_1, \eps_2 \in F^\times$.

%
%

\subsection{Local orbital integrals for $G'$}\label{local:G'}

For $g \in G'$, let
\[
H'_g=\{ (h_1, h_2) \in H'\times H' : h_1^{-1} gh_2=g\}
\]
denote the stabilizer of $g$ under the action of $H' \times H'$. We call a double coset $H'gH'$ (or
the element $g$) {\it relevant} if the map
\[
H'_g\rightarrow \C : (h_1, h_2) \mapsto \eta(\det h_2)
\]
is trivial. 

Fix a Haar measure on $\GL_n(F)$ and use this
to give $H' = \GL_n(F) \times \GL_n(F)$ the product measure.  For each double coset $H'gH'$,
we fix a (left) Haar measure on $H'_g$.
Then for $f \in C_c^\infty(G')$ and relevant $g\in G'$ we define the orbital integral
\begin{equation}\label{orbsplit}
I'_{g}(f)=\int_{H'_g \backslash H'\times H'} f(h_1^{-1}gh_2)\eta(\det h_2) \,dh_1 \,dh_2,
\end{equation}
provided it converges.  

For  $F' \in C_c^\infty(\GL_n(F))$ and $X \in \GL_n(F)$ we define the orbital integrals over conjugacy classes of $\GL_n(F)$ by
\[
O_X(F')=\int_{T'_X\backslash \GL_n(F)} F'(g^{-1}X g)\,dg,
\]
where $T'_X$ denotes the centralizer of $X$ in $\GL_n(F)$.
We will specify the Haar measure on $T'_X$ for certain $X$ in the proof of Lemma \ref{lem:redac}.

Now we will relate the orbital integrals on $G'$ to the orbital integrals on $\GL_n(F)$.

Define the open subset of $M_n$,
\[
\Ucal'=\left\{ X \in M_n : \bmx I_n & X \\ I_n & I_n \emx \in G' \right\}.
\]
Consider the mapping from $\Ucal' \rightarrow G'$ by 
\[
X \mapsto g'(X)=\bmx I_n &  X \\ I_n & I_n \emx.
\]
Given $f \in C_c^\infty(G')$ we define a smooth function $F'_f$ on $\Ucal'$ by  
\[
F'_f(X)=  \int_{(\GL_n(F))^3} f \left(\bmx A_1^{-1} & A_1^{-1} X A_2 B \\ A_2^{-1} & B \emx \right)\eta( \det (A_2B) ) \,dA_1 \,dA_2\, dB
\]
when this integral converges. 

For $F'_f(X)$ to be nice, we want to look at functions $f$ supported on the subset
\begin{equation}\label{eqn:G'main}
G'^{\main}=\left \{\bmx A & B \\ C& D \emx \in G' : A, B, C, D \in \GL_n(F)\right\}.
\end{equation}
Put
$\Ucal'^{\main}=\Ucal' \cap \GL_n(F)$  and $\Ucal'^{\ell}=\Ucal' \cap \GL_n(F)^{\ell}$,
where $\GL_n(F)^{\ell}$ denotes the set of regular elliptic elements in $\GL_n(F)$.
Note the mapping $X \mapsto g'(X)$ maps $\Ucal'^{\main}$  to $G'^{\main}$ and maps $\Ucal'^{\ell}$ to $G'^{\ell}$.

Now we prove $F'_f$ is defined and smooth on $\Ucal'^{\main}$. 
\begin{lem}\label{lem:localconv'}
Let $f \in C_c^\infty(G')$ and $X \in \Ucal'^{\main}$. Then the following statements hold:
\begin{enumerate}
\item $F'_f(X)$ is a  convergent integral. 
\item $F'_f(X)=0$ if $|\det(I_n-X)|$ is sufficiently small (in terms of an explicit constant that depends on $f$). 
\end{enumerate}
\end{lem}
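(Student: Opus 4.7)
The plan is to parameterize the argument of $f$ via the factorization
\[
 m(A_1, A_2, B) := \bmx A_1^{-1} & A_1^{-1} X A_2 B \\ A_2^{-1} & B \emx = \diag(A_1^{-1}, A_2^{-1})\, g'(X)\, \diag(I_n, A_2 B),
\]
which exhibits $m$ as an element of the $H' \times H'$-orbit of $g'(X)$, and then to use the compactness of $\mathrm{supp}(f)$ in $G' = \GL_{2n}(F)$ to bound $A_1, A_2, B$.

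For (1), the key identity, obtained via a Schur complement with $B$ invertible, is
\[
 \det m(A_1, A_2, B) = \det(B)\, \det(I_n - X)\, \det(A_1)^{-1},
\]
which is well-defined because $X \in \Ucal'^{\main}$ forces $I_n - X \in \GL_n(F)$. Fix a compact $K \subset G'$ containing $\mathrm{supp}(f)$; on $K$ both the entries of $m$ and of $m^{-1}$ are bounded, and $|\det m|$ is bounded above and away from $0$. The entry bound forces $A_1^{-1}, A_2^{-1}, B$ to lie in bounded subsets of $M_n(F)$, so $|\det A_1|, |\det A_2|$ are bounded below and $|\det B|$ is bounded above; combined with the determinant identity and $\det(I_n - X) \neq 0$, this pins $|\det A_1|$ and $|\det B|$ into a compact positive interval, so $A_1$ and $B$ each lie in a compact subset of $\GL_n(F)$. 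Multiplying the bounded top-right block $A_1^{-1} X A_2 B$ on the left by $X^{-1} A_1$ and on the right by $B^{-1}$ (both of which are now bounded) shows that $A_2$ has bounded entries; together with $|\det A_2|$ bounded below this places $A_2$ in a compact subset of $\GL_n(F)$ as well. Hence the integrand is supported on a compact subset of $\GL_n(F)^3$, giving absolute convergence, and smoothness of $F'_f$ on $\Ucal'^{\main}$ follows by differentiating under the integral sign.

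For (2), I simply make the above bounds explicit. Let $c := \inf_{g \in K} |\det g| > 0$, and let $C_0 = C_0(f)$ be a constant large enough that $|\det A_1|^{-1} \le C_0$ and $|\det B| \le C_0$ whenever $m(A_1,A_2,B) \in K$. On the support of the integrand the determinant identity gives
\[
 |\det(I_n - X)| = \frac{|\det m|\, |\det A_1|}{|\det B|} \ge \frac{c}{C_0^{\,2}}.
\]
Consequently, if $|\det(I_n - X)| < c/C_0^{\,2}$, no triple $(A_1,A_2,B)$ can place $m$ in $\mathrm{supp}(f)$, whence $F'_f(X) = 0$.

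I expect no real obstacle: this is a direct compactness argument, the one subtlety being that one must exploit both an upper and a lower bound on $|\det m|$, which is available precisely because $K$ sits inside $\GL_{2n}(F)$ rather than merely $M_{2n}(F)$. The condition $X \in \Ucal'^{\main}$ is what makes the Schur-complement identity factor $\det m$ cleanly into $\det A_1$, $\det B$, and $\det(I_n - X)$, and it is exactly the size of $|\det(I_n - X)|$ which governs the support of $F'_f$.
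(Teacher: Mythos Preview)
Your argument is correct and is essentially the same compactness argument as the paper's: bound the four $n\times n$ blocks of $m$ inside $M_n(F)$, then use the factorization $\det m = \det(B)\det(I_n-X)\det(A_1)^{-1}$ together with $|\det m|$ being bounded away from $0$ on $\mathrm{supp}(f)$ to trap $A_1,A_2,B$ in compact subsets of $\GL_n(F)$. The only cosmetic difference is that the paper first substitutes $A_i\mapsto A_i^{-1}$ so that the visible blocks are $A_1,A_2,B$ rather than $A_1^{-1},A_2^{-1},B$; also note that in this nonarchimedean setting ``smoothness'' of $F'_f$ should be read as local constancy (and is anyway not part of the present lemma).
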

\begin{proof}
We will prove that  $F_f'(X)$ is an integral over compact sets. Since $f$ is compactly supported on $G'$ there exists a compact set $\Omega_f$ in $M_n(F)$ such that if 
\[
f\left( \bmx A_1 & A_1 X A_2^{-1} B \\ A_2 & B \emx\right) \neq 0
\]
then $A_1, A_2, B, A_1XA_2^{-1}B \in \Omega_f$. It remains to prove that the determinant of each variable of integration is bounded away from zero in the support of $f$. First we note that since $\Omega_f$ is a compact subset of $M_n$ there exists a $c_f>0$ such that if $g\in \Omega_f$ then $|\det g|<c_f$. Now we note that since $f$ is compactly supported in $G'$ there exists a $c_f'>0$ such that if $f(g)\neq 0$ then $|\det g|>c_f'$. Finally we note that
\[
\bmx A_1 & A_1 X A_2^{-1} B \\ A_2 & B \emx=\bmx A_1 & 0 \\ 0& A_2 \emx \bmx I_n & X \\ I_n & I_n \emx \bmx I_n & 0 \\ 0 & A_2^{-1} B \emx .
\]
Combining these facts we see that if
 \[
f\left( \bmx A_1 & A_1 X A_2^{-1} B \\ A_2 & B \emx\right) \neq 0
 \]
  then

\[
|\det (A_1)|, \, |\det(A_2)|, \, |\det(B)|<c_f, \quad
|\det(A_1)\det(X)\det(B)|<c_f|\det(A_2)|
\]
and
\[
c_f'<|\det(I_n-X)||\det(A_1)||\det(B)|.
\]
Thus, in the support of $f$,  $|\det(A_1)|$ and $|\det( B)|$ are bounded below by $\frac{c_f'}{c_f}{|(\det(I_n-X)|^{-1}}$ and $|\det(A_2)|$ is bounded below by $\frac{(c_f')^2|\det(X)|}{c_f^3|\det(I_n-X)|^2}$. 

To prove the second statement we note that the integrand defining $F_f'(X)$ is identically zero unless $|\det(I_n-X)|> \frac{c_f'}{c_f^2}$.

\end{proof}
\begin{lem} \label{lem:redac} Let $f \in C_c^\infty(G'^{\main})$ (resp.\ $f \in C_c^\infty(G'^{\ell})$). Then
\begin{enumerate}
\item $F'_f \in C_c^\infty(\Ucal'^{\main})$ (resp.\ $F'_f \in C_c^\infty(\Ucal'^{\ell})$); and
\item  for $X \in \Ucal'^{\main}$,  $I'_{g'(X)}(f)=O_X(F'_f)$.
\end{enumerate}
\end{lem}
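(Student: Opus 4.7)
\emph{Plan.}

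For part (1), smoothness of $F'_f$ on $\Ucal'^{\main}$ (and $\Ucal'^{\ell}$) follows from differentiating under the integral sign: the proof of Lemma \ref{lem:localconv'} shows that, for $X$ varying locally, the integrand in the definition of $F'_f(X)$ is supported on a compact subset of $\GL_n(F)^3$ that can be taken uniform in $X$.  For compact support in $\Ucal'^{\main}$, I would refine the bounds of that same proof to extract that when $F'_f(X) \ne 0$, the entries of $X$ are bounded and both $|\det X|$ and $|\det(I_n - X)|$ are bounded below by positive constants depending only on $\mathrm{supp}(f)$; this carves out a compact subset of $\Ucal'^{\main}$.  For the elliptic variant, the key observation is that the support condition $h_1^{-1} g'(X) h_2 \in \mathrm{supp}(f) \subset G'^{\ell}$ forces $g'(X) \in G'^{\ell}$, since $G'^{\ell}$ is $H' \times H'$-stable, and hence $X \in \Ucal'^{\ell}$ by the elliptic analogue of \eqref{eqn:reg'}.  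Combined with the fact that $(h_1, h_2)$ is confined to a compact set by the same arguments as in Lemma \ref{lem:localconv'}, compactness of $\mathrm{supp}(f)$ inside the \emph{open} set $G'^{\ell}$ implies $\mathrm{supp}(F'_f)$ lies in a compact subset of $\Ucal'^{\ell}$.

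For part (2), I plan a direct change of variables in $I'_{g'(X)}(f)$.  Parametrizing $(h_1, h_2) \in H' \times H'$ as $h_1 = \diag(P, Q)$, $h_2 = \diag(R, S)$, a matrix computation shows
\[
h_1^{-1} g'(X) h_2 = \bmx P^{-1} R & P^{-1} X S \\ Q^{-1} R & Q^{-1} S \emx, \qquad H'_{g'(X)} = \{(\diag(S_0, S_0), \diag(S_0, S_0)) : S_0 \in T'_X\}
\]
(with $T'_X$ embedded diagonally).  I would then substitute $(P, Q, R, S) = (g A_1, g A_2, g, g A_2 B)$, where $g \in \GL_n(F)$ and $(A_1, A_2, B) \in \GL_n(F)^3$.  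This is a bijection $\GL_n(F)^4 \to \GL_n(F)^4$ (with inverse $g = R$, $A_1 = R^{-1} P$, $A_2 = R^{-1} Q$, $B = Q^{-1} S$) of unit Jacobian by iterated left-invariance of the Haar measure on $\GL_n(F)$, and it transports the $H'_{g'(X)}$-action to left translation of $g$ by $T'_X$ (trivial on $A_1, A_2, B$).  Under this substitution the integrand becomes $f\bigl(\bmx A_1^{-1} & A_1^{-1} g^{-1} X g A_2 B \\ A_2^{-1} & B \emx\bigr)$, while $\eta(\det h_2) = \eta((\det g)^2 \det(A_2 B)) = \eta(\det(A_2 B))$ since $\eta^2 = 1$.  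Integrating in $(A_1, A_2, B)$ produces $F'_f(g^{-1} X g)$, and the remaining integration over $g \in T'_X \bs \GL_n(F)$ gives $O_X(F'_f)$.

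The main obstacle is the measure-theoretic bookkeeping in part (2): verifying that the proposed substitution is indeed a measure-preserving bijection $(T'_X \bs \GL_n(F)) \times \GL_n(F)^3 \to H'_{g'(X)} \bs H' \times H'$, and that the diagonal $T'_X$-stabilizer in the $(P, Q, R, S)$ coordinates matches left translation on $g$ so that the quotient $T'_X \bs \GL_n(F)$ appears correctly in $O_X(F'_f)$.  For part (1) the only delicate step is the elliptic case, where one must use compactness of $\mathrm{supp}(f)$ inside the open subset $G'^{\ell}$ (rather than just in $G'$) to confine $\mathrm{supp}(F'_f)$ to a compact subset of $\Ucal'^{\ell}$ and not merely a bounded subset of $\GL_n(F)$.
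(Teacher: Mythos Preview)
Your proposal is correct and follows essentially the same approach as the paper. For part (1) the paper argues slightly more directly---since $f$ is supported in $G'^{\main}$, the four $n\times n$ blocks lie in a compact set $K_f\subset\GL_n(F)$, whence $X\in K_f^{-1}K_fK_f^{-1}K_f$ and Lemma \ref{lem:localconv'}(2) then bounds $|\det(I_n-X)|$ away from zero---but your plan to bound entries and both determinants separately is equivalent; for part (2) your substitution $(P,Q,R,S)=(gA_1,gA_2,g,gA_2B)$ is exactly the paper's change of variables, only written in one step rather than two.
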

\begin{proof}
For the first part we note that, by the definition of $G'^{\main}$ (see \eqref{eqn:G'main}), for $f \in C_c^\infty(G'^{\main})$ there exists a compact subset $K_f$ of $\GL_n(F)$ such that  if 
\[
f\left( \bmx A_1 & A_1 X A_2^{-1} B \\ A_2 & B \emx\right) \neq 0
\]
then $A_1, A_1XA_2^{-1}B, A_2, B\in K_f$. Hence if $F'_f(X)\neq 0$ then $X \in K_f^{-1}K_fK_f^{-1}K_f$. The result for $G'^{\main}$ now follows by applying this fact and the second result from the previous lemma.   The result for $G'^{\ell}$ follows from the result for $G'^{\main}$.

We now proceed to prove the equality of orbital integrals under the mapping by a straightforward calculation. First we note that for $X \in \Ucal'^{\main}$,
\begin{equation}\label{eq:stabGprimemain}
H_{g'(X)}=\left \{\left( \bmx t & 0 \\ 0& t\emx, \bmx t & 0 \\ 0 & t \emx  \right) : t \in T'_X \right \}.
\end{equation}
Consequently, $g'(X)$ is relevant and $H_{g'(X)}$ is unimodular.
Normalize the measure on $T'_X$ so that it is compatible with this isomorphism
between $T'_X$ and $H_{g'(X)}$.
Thus
\begin{align*}
I'_{g'(X)}&=\int_{H'_{g'(X)}\backslash H'\times H'} f\left( h_1^{-1} \bmx  I_n & X \\ I_n & I_n \emx h_2 \right) \eta( \det h_2) \,dh_1\, dh_2
\\
&=\int_{H'_{g'(X)}\backslash H'\times H'} f\left( \bmx  A_1^{-1}B_1 & A_1^{-1}XB_2 \\ A_2^{-1}B_1 & A_2^{-1}B_2 \emx  \right) \eta( \det B_1B_2) \,dA_1\,dA_2 \,dB_1\,dB_2.
\end{align*}
By the change of variables $A_1 \mapsto B_1A_1$ and $B_2 \mapsto B_1A_2B_2$ the previous line equals
\begin{multline*}
\int_{T'_X\backslash \GL_n(F)} \int_{(\GL_n(F))^3} 
f\left( \bmx A_1^{-1} & A_1^{-1}B_1^{-1}XB_1 A_2B_2 \\ A_2^{-1} & B_2 \emx\right) \eta( \det (A_2B_2))\,dA_1\, dA_2\, dB_1\,dB_2
\\
= \int_{T'_X\backslash \GL_n(F)} F'_f(B_1^{-1}XB_1)\,dB_1
= O_X(F'_f).
\end{multline*}
\end{proof}

\begin{lem}\label{lem:surj}
The map from $C_c^\infty(G'^{\main})\rightarrow C_c^\infty(\Ucal'^{\main})$ defined by $f \mapsto F'_f$ is surjective.  The restriction of this map also gives a surjection $C_c^\infty(G'^{\ell})\rightarrow C_c^\infty(\Ucal'^{\ell})$.
\end{lem}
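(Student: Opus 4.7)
The strategy is to reparametrize $G'^{\main}$ so that $f\mapsto F'_f$ becomes visibly integration over three of the coordinates, at which point surjectivity reduces to an elementary averaging construction.

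First I would introduce the map
\[
\Psi\colon \GL_n(F)^3\times \Ucal'^{\main}\longrightarrow G'^{\main}, \quad (\alpha_1,\alpha_2,\beta,X)\longmapsto \bmx \alpha_1 & 0\\ 0 & \alpha_2 \emx g'(X)\bmx I_n & 0\\ 0 & \beta \emx=\bmx \alpha_1 & \alpha_1 X\beta\\ \alpha_2 & \alpha_2\beta \emx,
\]
and check that it is a bijection with inverse $\bmx a & b\\ c & d \emx\mapsto (a,c,c^{-1}d,a^{-1}bd^{-1}c)$. The identity $\det(I_n - a^{-1}bd^{-1}c) = \det(Y)/(\det a\det d)$ for $Y = \bmx a & b\\ c & d \emx$ guarantees the output $X$ lies in $\Ucal'^{\main}$ when $Y\in G'^{\main}$. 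Since $\Psi$ and $\Psi^{-1}$ are both given by rational formulas with nowhere-vanishing denominators on their respective domains, $\Psi$ is a homeomorphism, and pullback induces a bijection $C_c^\infty(G'^{\main})\simeq C_c^\infty(\GL_n(F)^3\times \Ucal'^{\main})$.

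Substituting $\alpha_1 = A_1^{-1}$, $\alpha_2 = A_2^{-1}$, $\beta = A_2B$ in the definition of $F'_f(X)$, and using unimodularity of $\GL_n(F)$ together with left-invariance of Haar measure, one rewrites
\[
F'_f(X) = \int_{\GL_n(F)^3}(f\circ\Psi)(\alpha_1,\alpha_2,\beta,X)\,\eta(\det\beta)\,d\alpha_1\,d\alpha_2\,d\beta.
\]
Thus $F'_f$ is simply the integral of $f\circ\Psi$ over $d\alpha_1\,d\alpha_2\,d\beta$, twisted by $\eta\circ\det$ in the last variable. Given $\phi\in C_c^\infty(\Ucal'^{\main})$, I would choose $\psi_1,\psi_2\in C_c^\infty(\GL_n(F))$ with $\int\psi_i = 1$ and $\psi_3\in C_c^\infty(\GL_n(F))$ satisfying $\int \psi_3(\beta)\eta(\det\beta)\,d\beta = 1$ (such a $\psi_3$ exists because $\eta\circ\det$ is locally constant on $\GL_n(F)$: take a suitable scalar multiple of the characteristic function of a small compact-open neighborhood of $I_n$ on which $\eta\circ\det\equiv 1$). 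Setting $F(\alpha_1,\alpha_2,\beta,X):=\phi(X)\psi_1(\alpha_1)\psi_2(\alpha_2)\psi_3(\beta)\in C_c^\infty(\GL_n(F)^3\times\Ucal'^{\main})$ and $f:=F\circ\Psi^{-1}$ yields $f\in C_c^\infty(G'^{\main})$ with $F'_f=\phi$.

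For the elliptic version it suffices to verify the containment $\Psi(\GL_n(F)^3\times\Ucal'^{\ell})\subseteq G'^{\ell}$. But $\Psi(\alpha_1,\alpha_2,\beta,X)$ lies in the double coset $H'g'(X)H'$, and the computation $\rho(h_1gh_2) = h_1\rho(g)h_1^{-1}$ (using $\theta|_{H'}=\mathrm{id}$) shows $G'^{\ell}$ is bi-$H'$-invariant; combined with the elliptic analog of \eqref{eqn:reg'}, the containment is immediate. Hence if $\phi$ is supported in $\Ucal'^{\ell}$, the $f$ constructed above lies in $C_c^\infty(G'^{\ell})$. The only real obstacle is the bookkeeping needed to confirm that $\Psi$ is a $p$-adic analytic isomorphism so that pullback preserves smoothness and compact support; once that is in hand, the rest of the argument is essentially formal.
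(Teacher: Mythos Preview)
Your proposal is correct and follows essentially the same approach as the paper: both construct the preimage $f$ via the factorization $\bmx A_1 & \\ & A_2 \emx g'(X) \bmx I_n & \\ & B \emx$ of elements of $G'^{\main}$, choosing auxiliary functions on the three $\GL_n(F)$ factors with the appropriate (twisted) integrals equal to $1$. The paper simply asserts that the resulting $f$ lies in $C_c^\infty(G'^{\main})$ with $F'_f=\phi$ and says ``the elliptic case is similar,'' whereas you spell out the homeomorphism $\Psi$, the change of variables, and the bi-$H'$-invariance of $G'^{\ell}$ explicitly.
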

\begin{proof}
Given $\phi \in C_c^\infty(\Ucal'^{\main})$ let $\phi_0 , \phi_1 \in C_c^\infty(\GL_n(F))$ be such that
\[
\int_{\GL_n(F)}\phi_0(g)\,dg=1
\quad
\text{and} 
\quad
\int_{\GL_n(F)} \phi_1(g)\eta(\det g)\, dg =1.
\]
Then define 
\[
f\left( \bmx A_1 \\ & A_2\emx \bmx I_n &  X \\ I_n & I_n \emx \bmx I_n \\ & B\emx \right)=\phi_0(A_1)\phi_0(A_2)\phi_1(B)\phi(X).
\]
We extend $f$ to all of $G'^{\main}$ by defining $f$ to be zero on $\bmx A & B \\ C& D \emx$ if $A^{-1}BD^{-1}C \not \in \Ucal'^{\main}$. It is clear that $f\in C_c^\infty(G'^{\main})$ and  $F'_f=\phi$.

The elliptic case is similar.
\end{proof}

\subsection{Local orbital integrals for $G$}
Fix $\eps \in F^\times$ and throughout this subsection let $G=G_\eps$.

For $g \in G$, let
\[
H_g=\{ (h_1, h_2) \in H\times H : h_1^{-1} gh_2=g\}
\]
denote the stabilizer of $g$ under the action of $H \times H$. 
Fix a Haar measure $dh$ on the group $H \cong \GL_n(E)$, 
and one on each stabilizer $H_g$.
For $f \in C_c^\infty(G)$ we define the orbital integral
\begin{equation}\label{orbnonsplit}
I_{g}(f)=\int_{H_g \backslash H\times H} f(h_1^{-1}gh_2) \,dh_1 \,dh_2,
\end{equation}
when convergent.  

For  $F \in C_c^\infty(\GL_n(E))$ and $X \in \GL_n( E)$, the twisted orbital integral on $\GL_n(E)$ is
\[
TO_X(F)=\int_{T_X\backslash \GL_n(E)} F(g^{-1}X\bar g)\,dg,
\]
where $T_X$ denotes the twisted centralizer of $X$ in $\GL_n(E)$, that is,
\[
T_X=\{g \in \GL_n(E) : g^{-1} X \bar g =X\}.
\]
We specify a measure on $T_X$ for certain $X$ in the proof of Lemma \ref{lem:F_finU}.

Let
\[
G^{\main}=\left \{ \bmx \alpha & \eps \beta \\ \bar \beta & \bar \alpha  \emx \in G : \alpha, \beta \in \GL_n(E) \right \}.
\]
Consider the open subset of $M_n(E)$,
 \[
\Ucal = \Ucal_\eps=\left\{ X \in M_n(E): \bmx I_n & \eps X \\ \bar X & I_n\emx \in G \right\}.
\]
We also set $\Ucal^{\main} = \Ucal^{\main}_\eps=\Ucal \cap \GL_n(E)$ and $
\Ucal^\ell = \Ucal^{\ell}_\eps=\Ucal \cap \GL_n(E)^{\ell,\twist}$,
where $\GL_n(E)^{\ell, \twist}$ denotes the set of twisted elliptic elements of $\GL_n(E)$.
Define a mapping from $\Ucal \rightarrow G$ by 
\[
X \mapsto g(X) = g_\eps(X)=\bmx I_n & \eps X \\ \bar X & I_n \emx.
\]
Note  that this mapping restricted to $\Ucal^{\main}$ maps to $G^{\main}$. 
Now we can define a mapping of test functions.

Given $f \in C_c^\infty(G)$, we define a smooth $F_f$ on  $\Ucal$  by
\[
F_f(X)=\int_{\GL_n(E)} f\left( \bmx \alpha & 0 \\ 0 & \bar \alpha \emx g(X)\right) \, d\alpha,
\]
when this integral converges.

\begin{lem}
Let $f \in C_c^\infty(G)$ and $X \in \Ucal^{\main}$. Then 
\begin{enumerate}
\item $F_f(X)$ is a convergent integral; and
\item $F_f(X)=0$ if $|\det(I_n-\eps X\bar X)|$ is sufficiently small (in terms of an explicit constant that depends on $f$). 
\end{enumerate}
\end{lem}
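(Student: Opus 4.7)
The plan is to mirror the proof of Lemma~\ref{lem:localconv'}, adapted to the unitary setting. Under the natural embedding of $H = \GL_n(E)$ into $G_\eps$, an element $\alpha \in \GL_n(E)$ is sent to $\bmx \alpha & 0 \\ 0 & \bar\alpha \emx$, so multiplying by $g(X)$ gives $\bmx \alpha & \eps \alpha X \\ \bar\alpha \bar X & \bar\alpha \emx$. Factoring this matrix as $\bmx \alpha & 0 \\ 0 & \bar\alpha \emx \bmx I_n & \eps X \\ \bar X & I_n \emx$ and applying the block-determinant formula to the second factor shows that the absolute value of its determinant equals $|\det\alpha|^2 \cdot |\det(I_n - \eps X \bar X)|$.

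Next, I would use the compact support of $f$ to constrain $\alpha$. Since $f \in C_c^\infty(G)$, there is a compact $\Omega_f \subset M_n(E)$ in which the four blocks $\alpha$, $\bar\alpha$, $\eps\alpha X$, $\bar\alpha \bar X$ must all lie whenever the integrand is nonzero, yielding an upper bound $|\det\alpha| \leq c_f$. Simultaneously, $f$ being compactly supported in $G_\eps \subset \GL_{2n}(E)$ means determinants on the support are bounded away from zero, giving $|\det\alpha|^2 \cdot |\det(I_n - \eps X \bar X)| \geq c_f' > 0$. Combining these produces $|\det\alpha| \geq \sqrt{c_f'/|\det(I_n - \eps X \bar X)|}$, which together with the entrywise bound $\alpha \in \Omega_f$ confines $\alpha$ to a compact subset of $\GL_n(E)$ and establishes (1). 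The same pair of inequalities forces $|\det(I_n - \eps X \bar X)| \geq c_f'/c_f^2$ whenever $F_f(X) \neq 0$, which is exactly statement (2) with explicit constant.

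The main obstacle, as in the split-case analog, is purely bookkeeping---there is no deep technical barrier. The computation is in fact simpler here, since only the single variable $\alpha$ is integrated rather than three variables, and Galois-invariance of the absolute value ($|\det \bar\alpha| = |\det \alpha|$) means the appearance of bars in the block entries causes no new difficulty. Smoothness of $F_f$ on $\Ucal^{\main}$, should it be needed later, follows from the fact that on the (uniformly bounded) region of integration $f$ is smooth and the integrand depends smoothly on $X$.
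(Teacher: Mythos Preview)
Your proposal is correct and follows exactly the approach the paper intends: the paper itself omits the proof, stating only that it ``is very similar, but simpler than, the proof of Lemma~\ref{lem:localconv'}.'' Your adaptation---bounding $|\det\alpha|$ above via the compact set $\Omega_f\subset M_n(E)$ containing the blocks, bounding the full determinant $|N_{E/F}(\det\alpha)|\cdot|\det(I_n-\eps X\bar X)|$ below via compact support in $G_\eps$, and combining these to confine $\alpha$ to a compact subset of $\GL_n(E)$ and force $|\det(I_n-\eps X\bar X)|\ge c_f'/c_f^2$---is precisely the intended simplification, with one integration variable instead of three.
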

The proof is very similar to, but simpler than, the proof of Lemma \ref{lem:localconv'},
so we omit it.

\begin{lem}\label{lem:F_finU}
Let $f \in C_c^\infty(G^{\main})$ (resp.\ $C_c^\infty(G^\ell)$), then 
$F_f \in C_c^\infty(\Ucal^{\main})$ (resp.\ $C_c^\infty(\Ucal^{\ell})$). 
Furthermore,  for $X \in \Ucal^{\main}$, $I_{g(X)}(f)=TO_X(F_f)$.
\end{lem}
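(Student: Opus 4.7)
The plan is to mirror the proof of Lemma~\ref{lem:redac}, with twisted orbital integrals on $\GL_n(E)$ playing the role of ordinary orbital integrals on $\GL_n(F)$. For the support statement with $f \in C_c^\infty(G^{\main})$, I would argue that if $F_f(X) \neq 0$ then some translate $\bmx \alpha & 0 \\ 0 & \bar\alpha \emx g(X) = \bmx \alpha & \eps \alpha X \\ \bar\alpha \bar X & \bar\alpha \emx$ lies in $\mathrm{supp}(f) \subset G^{\main}$, so each block lies in a compact subset $K_f \subset \GL_n(E)$; in particular $\alpha, \alpha X \in K_f$, forcing $X \in K_f^{-1}K_f$, a compact subset of $\GL_n(E)$. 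Combined with part~(2) of the preceding lemma (which bounds $|\det(I_n - \eps X \bar X)|$ away from $0$ on the support), this shows $\mathrm{supp}(F_f)$ is a compact subset of $\Ucal^{\main}$. Local constancy of $F_f$ follows from that of $f$ together with the compactness of the effective domain of integration.

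For the elliptic refinement, I would exploit that $h_\alpha := \bmx \alpha & 0 \\ 0 & \bar\alpha \emx$ lies in $H_\eps$ and that $\rho_\eps$ is $H_\eps$-equivariant under left translation: hence $h_\alpha g(X) \in G^\ell$ iff $g(X) \in G^\ell$, which by \eqref{eqn:reg} is equivalent to $X \in \Ucal^\ell$. Consequently $f \in C_c^\infty(G^\ell)$ yields $F_f \in C_c^\infty(\Ucal^\ell)$.

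For the identity $I_{g(X)}(f) = TO_X(F_f)$, I would first verify by a direct matrix computation (using the description of $H_\eps$ as the fixed points of $\theta_\eps$) that $H_{g(X)} = \{(h_\alpha, h_\alpha) : \alpha \in T_X\}$. The algebraic core is the factorization
\[
 h_{\alpha_1}^{-1}\, g(X)\, h_{\alpha_2} \;=\; h_\gamma \cdot g(\alpha_2^{-1} X \bar\alpha_2), \qquad \gamma := \alpha_1^{-1}\alpha_2,
\]
verified by multiplying out. Crucially the $g$-argument involves only $\alpha_2$ and $X$. After the change of variables $(\alpha_1, \alpha_2) \mapsto (\gamma, \alpha_2)$ (with unit Jacobian by unimodularity of $\GL_n(E)$), the diagonal $T_X$-action on $H_{g(X)} \bs H \times H$ reduces to left translation on $\alpha_2$ alone, so the $\gamma$-integral runs unrestricted over $\GL_n(E)$ and collapses to $F_f(\alpha_2^{-1} X \bar\alpha_2)$ by the definition of $F_f$. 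Integrating over $\alpha_2 \in T_X \bs \GL_n(E)$ then yields $TO_X(F_f)$.

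I expect the main obstacle to be simply discovering this particular factorization. The alternative $h_{\alpha_1}^{-1} g(X) h_{\alpha_2} = g(\alpha_1^{-1} X \bar\alpha_1) \cdot h_\gamma$ is equally natural but puts the $g$-factor on the wrong side to match the $h_\alpha g(X)$ shape in the definition of $F_f$; only the version above cleanly decouples the $\gamma$- and $\alpha_2$-integrals. Once that choice is made, the remaining steps---the unimodular Jacobian, the reduction of the $T_X$-action, and the elliptic specialization---are routine.
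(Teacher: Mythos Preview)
Your proposal is correct and follows essentially the same route as the paper's proof: the support argument is handled exactly as in Lemma~\ref{lem:redac}, the stabilizer $H_{g(X)}$ is identified with the diagonal copy of $T_X$, and your factorization $h_{\alpha_1}^{-1} g(X) h_{\alpha_2} = h_\gamma \cdot g(\alpha_2^{-1} X \bar\alpha_2)$ with $\gamma = \alpha_1^{-1}\alpha_2$ is precisely what the paper obtains via the change of variables $\alpha_1 \mapsto \alpha_2 \alpha_1$. Your treatment of the elliptic refinement is slightly more explicit than the paper's (which simply declares it ``similar to the case of $G'$''), but the underlying argument is the same.
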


\begin{proof} 
The first statement is similar to the case of $G'$.

For the equality of orbital integrals, first we note that for $X \in \Ucal^{\main}$
\begin{equation}
H_{g(X)}=\left\{ \left( \bmx t & 0 \\ 0 & \bar t \emx, \bmx t & 0 \\ 0 & \bar t \emx \right): t \in T_X \right \}.
\end{equation}
Similar to before, use this isomorphism with $T_X$ to transport the measure from 
$H_{g(X)}$ to $T_X$  for $X \in \Ucal^{\main}$.
Now we proceed by a straightforward calculation,
\begin{align*}
I_{g(X)}(f)=&\int_{H_{g(X)}\backslash H\times H}f(h_1^{-1} g(X) h_2) \, dh_1 \,dh_2
\\
=&\int_{T_X \backslash \GL_n(E)\times \GL_n(E)}f\left( \bmx \alpha_1^{-1} \alpha_2 & \eps \alpha_1^{-1} X \bar \alpha_2 \\ \bar \alpha_1^{-1} \bar X \alpha_2 & \bar \alpha_1^{-1} \bar \alpha_2 \emx \right) \, d\alpha_1 \, d\alpha_2.
\end{align*} 
With a change of variables sending $\alpha_1 \mapsto \alpha_2 \alpha_1$ the previous line equals
\begin{multline*}
\int_{T_X\backslash \GL_n(E)}\int_{\GL_n(E)} f\left( \bmx \alpha_1^{-1} & \eps \alpha_1^{-1} \alpha_2^{-1} X \bar \alpha_2 \\ \bar \alpha_1^{-1} \bar \alpha_2^{-1} \bar X \alpha_2 & \bar \alpha_1^{-1} \emx\right) \, d\alpha_1 \, d \alpha_2
\\
= \int_{T_X\backslash \GL_n(E)} F_f(g^{-1} X \bar g) \, dg
= TO_X(F_f).
\end{multline*}
\end{proof}

\begin{lem}\label{lem:surjeps}
The map from $C_c^\infty(G^{\main})\rightarrow C_c^\infty(\Ucal^{\main})$  defined by $f \mapsto F_f$ is surjective, and similarly for $C_c^\infty(G^{\ell})\rightarrow C_c^\infty(\Ucal^{\ell})$.
\end{lem}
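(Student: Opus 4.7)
The plan is to construct an explicit section of $f \mapsto F_f$, in direct analogy with Lemma \ref{lem:surj}. The key observation is that $G^{\main}$ admits a canonical factorization: any $g = \bmx \alpha & \eps\beta \\ \bar\beta & \bar\alpha \emx \in G^{\main}$ (so $\alpha, \beta \in \GL_n(E)$) satisfies
\[
g = \bmx \alpha & 0 \\ 0 & \bar\alpha \emx g_\eps(X), \qquad X := \alpha^{-1}\beta,
\]
and $X \in \Ucal^{\main}$ since $\alpha^{-1}\beta \in \GL_n(E)$ and $g_\eps(X) = \bmx \alpha & 0 \\ 0 & \bar\alpha \emx^{-1} g$ is invertible. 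Conversely, for any $(\alpha, X) \in \GL_n(E) \times \Ucal^{\main}$, the product $\bmx \alpha & 0 \\ 0 & \bar\alpha \emx g_\eps(X) = \bmx \alpha & \eps\alpha X \\ \bar\alpha\bar X & \bar\alpha \emx$ lies in $G^{\main}$. This yields a smooth bijection with smooth inverse
\[
\GL_n(E) \times \Ucal^{\main} \iso G^{\main}.
\]

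Given $\phi \in C_c^\infty(\Ucal^{\main})$, fix $\phi_0 \in C_c^\infty(\GL_n(E))$ with $\int_{\GL_n(E)} \phi_0(\alpha)\,d\alpha = 1$ and define
\[
f\left(\bmx \alpha & 0 \\ 0 & \bar\alpha \emx g_\eps(X)\right) := \phi_0(\alpha)\phi(X).
\]
The smooth factorization above guarantees $f \in C_c^\infty(G^{\main})$, and a direct calculation from the definition of $F_f$ yields
\[
F_f(X) = \int_{\GL_n(E)} \phi_0(\alpha)\phi(X)\,d\alpha = \phi(X),
\]
establishing surjectivity in the main case. In contrast to Lemma \ref{lem:surj}, only one normalizing factor is required, because the orbital integral $I_g(f)$ carries no $\eta$-twist.

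For the elliptic refinement, the same recipe applied to $\phi \in C_c^\infty(\Ucal^\ell)$ produces a candidate $f$, and we need only verify $\mathrm{supp}(f) \subset G^\ell$. Since $H$ is the fixed subgroup of $\theta_\eps$, left multiplication by $h \in H$ conjugates $\rho_\eps$-values: $\rho_\eps(hg) = h\rho_\eps(g)h^{-1}$. Hence for $h = \bmx \alpha & 0 \\ 0 & \bar\alpha \emx \in H$, one has $h\,g_\eps(X) \in G^\ell$ iff $g_\eps(X) \in G^\ell$, which by \eqref{eqn:reg} is equivalent to $X \in \Ucal^\ell$. No serious obstacle presents itself; the only step requiring attention is smoothness and well-definedness of the product decomposition, and this is immediate from the defining conditions on $G^{\main}$ and $\Ucal^{\main}$.
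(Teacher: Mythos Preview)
Your proof is correct and follows essentially the same approach as the paper: both pick $\phi_0 \in C_c^\infty(\GL_n(E))$ with integral $1$ and define $f$ via the factorization $\bmx \alpha & 0 \\ 0 & \bar\alpha \emx g_\eps(X)$. You supply more detail than the paper does --- in particular, you explicitly verify that this factorization is a smooth bijection $\GL_n(E) \times \Ucal^{\main} \to G^{\main}$ and you spell out why $\mathrm{supp}(f) \subset G^\ell$ in the elliptic case --- whereas the paper simply asserts ``it is clear'' and ``the elliptic case is similar.''
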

\begin{proof}
Given $\phi \in C_c^\infty(\Ucal^{\main})$ let $\phi_0 \in C_c^\infty(\GL_n(E))$ be such that
\[
\int_{\GL_n(E)}\phi_0(g)\,dg=1.
\]
Then define 
\[
f\left( \bmx \alpha \\ & \bar \alpha\emx \bmx I_n & \eps X \\ \bar X & I_n \emx\right)=\phi_0(\alpha)\phi(X).
\]
It is clear that $f\in C_c^\infty(G^{\main})$ and  $F_f=\phi$. 

The elliptic case is similar.
\end{proof}

\subsection{Local matching}

Fix a set of representatives $\{ \eps_1, \eps_2\}$ for $F^\times/NE^\times$ such that $\eps_1 \in NE^\times$, $\eps_2 \not \in NE^\times$. Now we will define the notion of matching functions, for which the reader should recall the correspondence of regular
double cosets given by \eqref{eqn:iotae-def}.  
 
We also need to use compatible measures.  Namely, our orbital integrals depend upon
a choice of measures on $H$ and $H'$ as well as on stabilizers $H_{g_\eps}$ and $H'_{g'}$.
The choice of measures on $H$ and $H'$ is not important for the general notion of matching functions, as one can just scale functions appropriately.  
However, for global applications it will be convenient to assume the following: 
if $[g'] = \iota_\eps([g_\eps])$ with $g'$ regular, then $H_{g_\eps} \cong H'_{g'}$,
and we use measures compatible with this isomorphism.

\begin{defn}
Let $n$ be even and fix $\eps \in F^\times$. Let $f' \in C_c^\infty(G')$ and $f_\eps \in C_c^\infty(G_\eps)$. We say that $f'$ and $f_\eps$ are \emph{matching functions} if
\[
I'_{g'}(f')=
\begin{cases}
I_{g_\eps}(f_\eps) &  \text{if } [g']=\iota_\eps([g_\eps]) \text{ for }
[g_\eps] \in\Gamma_{H_{\eps}}^{\reg}(G_{\eps}), \\
0 & \text{if } [g'] \in \Gamma_{H'}^{\reg}(G') \setminus \iota_{\eps}\left(\Gamma_{H_{\eps}}^{\reg}(G_{\eps})\right).
\end{cases}
\]
\end{defn}

When $n$ is odd, recall the disjointedness of regular double cosets of $G'$ corresponding to
$G_{\eps_1}$ versus $G_{\eps_2}$ from \eqref{eqn:iotanodd-reg}.

\begin{defn}
Let $n$ be odd. Let $f' \in C_c^\infty(G')$ and $f_\eps \in C_c^\infty(G_\eps)$ for $\eps \in \{ \eps_1, \eps_2\}$. We say that $f'$ and $(f_\eps)_\eps$ are \emph{matching functions} if
\begin{subnumcases}{I'_{g'}(f') =}
I_{g_{\eps}}(f_{\eps})  & if $[g']=\iota_\eps([g_\eps])$ for
$[g_\eps] \in\Gamma_{H_{\eps}}^{\reg}(G_{\eps}),  \eps \in \{ \eps_1, \eps_2\},$  \\
 0  & if  $[g'] \in \Gamma_{H'}^{\reg}(G') \setminus \bigsqcup_\eps
 \iota_{\eps}\left(\Gamma_{H_{\eps}}^{\reg}(G_{\eps})\right).$ 
  \label{f-vanish-odd}
\end{subnumcases}
\end{defn}

When $n$ is odd and $f'$ matches $(f_{\eps_1},0)$ or $(0,f_{\eps_2})$, we may simply say
$f'$ matches $f_{\eps_1}$ or $f_{\eps_2}$.  We only need to consider
$f'$ matching a pair $(f_{\eps_1}, f_{\eps_2})$ for Conjecture \ref{guo-conj}(2).

We first extend the matching of orbital integrals over (twisted) conjugacy classes for $\GL_n(F)$ from \cite{arthur:1989}.   Denote by $\GL_n(F)^{\reg}$ 
(resp.\ $\GL_n(E)^{\reg, \twist}$) the set of regular elements
of $\GL_n(F)$ (resp.\ twisted regular elements of $\GL_n(E)$).  
For $\gamma \in \GL_n(F)$, denote by $[\gamma]$ its conjugacy class.
 
\begin{prop} \label{prop:AC} Fix $\eps \in \{ \eps_1, \eps_2 \}$.
\begin{enumerate}
\item
Fix $\phi \in C_c^\infty(\GL_n(E))$ (resp.\ $C_c^\infty(\GL_n(E)^{\ell, \twist})$). 
Then there exists $\phi' \in C_c^\infty(\GL_n(F))$ (resp.\ $C_c^\infty(\GL_n(F)^{\ell})$)
such that for  $\gamma  \in \GL_n(F)^{\reg}$,
\[
O_{\gamma}(\phi')=
\begin{cases}
TO_{\delta}(\phi) & \text{if } \gamma=\eps \delta \bar \delta \text{ for } \delta \in \GL_n(E),
\\
0 & \text{if } [\gamma ] \not \in \eps  \mathcal N \GL_n(E).
\end{cases}
\]

\item Suppose $n$ is odd and  $\phi' \in C_c^\infty(\GL_n(F))$ (resp.\ $C_c^\infty(\GL_n(F)^\ell)$) such that $O_\gamma(\phi') = 0$ if $[\gamma] \not \in \eps_1 \mathcal N\GL_n(E)\cup  \eps_2 \mathcal N\GL_n(E)$.
Then there exist $\phi_{\eps_1}, \phi_{\eps_2} \in C_c^\infty(\GL_n(E))$ (resp.\ $C_c^\infty(\GL_n(E)^{\ell, \twist})$) such that
\begin{equation}\label{eqn:AC2}
O_{\gamma}(\phi')=\begin{cases}
TO_{\delta}(\phi_{\eps_1}) & \text{if } \gamma =\eps_1 \delta \bar \delta \text{ for } \delta \in \GL_n(E)^{\reg, \twist},
\\
TO_{\delta}(\phi_{\eps_2}) & \text{if } \gamma=\eps_2\delta \bar \delta \text{ for } \delta\in \GL_n(E)^{\reg, \twist}.
\end{cases}
\end{equation}
\item Suppose $n$ is even and fix   $\phi' \in C_c^\infty(\GL_n(F))$ (resp.\ $C_c^\infty(\GL_n(F)^{\ell})$) such that  $O_\gamma(\phi')=0$ for $[\gamma ] \not \in \eps\mathcal N \GL_n(E)$. Then there exists $\phi \in C_c^\infty(\GL_n(E))$ (resp.\ $C_c^\infty(\GL_n(E)^{\ell, \twist})$) such that
\[
O_\gamma(\phi')=TO_\delta(\phi), \quad \text{if } \gamma=\eps\delta \bar \delta \text{ for } \delta \in \GL_n(E)^{\reg, \twist}.
\]
\end{enumerate}
\end{prop}

\begin{proof}
First we prove part (1) for $\phi \in C_c^\infty(\GL_n(E))$. We may assume $\eps_1 = 1$.
If $\eps = \eps_1$ then this is contained in Proposition 3.1 in Chapter 1 of
\cite{arthur:1989}.  Denote this matching function by $\phi_1'$. 
For  $\eps = \eps_2$ let $\phi'_2(g)=\phi'_1(\eps_2^{-1}g)$. Then $O_{g}(\phi_2')=O_{\eps_2^{-1}g}(\phi_1')$. Hence $O_{\eps_2\delta \bar \delta}(\phi_2')=TO_\delta(\phi)$ and $O_{\gamma}(\phi_2')=0$ for $[\gamma]\not \in \eps_2 \mathcal N\GL_n(E)$.

Now suppose $\phi \in C_c^\infty(\GL_n(E)^{\ell, \twist})$.  
Consider the map $s: \GL_n(F) \to F^n$
given by the coefficients of the characteristic polynomial, i.e., if $A \in \GL_n(F)$ has
characteristic polynomial $x^n + \sum c_i x^i$, put 
$s(A)=(c_0, \ldots, c_{n-1})$.  Let $\mathbf s^{\ell}$ be the image of $\GL_n(F)^{\ell}$ under $s$.
Similarly, define $s_\eps : \GL_n(E) \to F^n$ by $s_\eps(A) = s(\eps \mathcal N A)$
and $\mathbf s_\eps^{\ell} = s_\eps(\GL_n(E)^{\ell, \twist})$.  Then $s$ and $s_\eps$
are continuous and $\mathbf s_\eps^{\ell} \subset \mathbf s^{\ell}$.  We may
view the orbital integrals $\phi \mapsto TO_*(\phi)$ and $\phi' \mapsto O_*(\phi')$
as maps $C_c^\infty(\GL_n(E)^{\ell, \twist}) \to C_c^\infty(\mathbf s_\eps^{\ell})$ 
and $C_c^\infty(\GL_n(F)^\ell) \to C_c^\infty(\mathbf s^\ell)$.  These maps are surjective, which gives the desired matching in (1).

Next suppose $n$ is odd, and consider part (2) first for arbitrary $\phi' \in C_c^\infty(\GL_n(F))$. 
Write $\phi' = \phi_1' + \phi_2'$ where $\phi_i'$ has support in the set of elements whose
determinant lies in $\varepsilon_i NE^\times$.  Then the orbital integrals of $\phi'_1$
vanish off the norms, so by \cite[Proposition 3.1]{arthur:1989}, there exists $\phi_{1}$
such that $O_{\delta \bar \delta}(\phi'_1)=TO_\delta(\phi_{1})$.
Let $\~ \phi_2'(g) = \phi_2'(\eps_2 g)$, whose orbital integrals also vanish
off the norms.
Then there exists $\phi_{2}$
such that 
$O_{\eps_2 \delta \bar \delta}(\phi'_2) = O_{\delta \bar \delta}(\~ \phi'_2)=TO_\delta( \phi_{2})$, 
which is our desired matching.

For the case of elliptic support in (2),
we use the fact that the sets $\eps_1 \mathcal N\GL_n(E)^{\ell, \twist}$ and $\eps_2 \mathcal  N\GL_n(E)^{\ell, \twist}$ are disjoint and open (when regarded as subsets) in $\GL_n(F)^{\ell}$ and their union is equal to $\GL_n(F)^{\ell}$ (cf.\ \cite[proof of Lemma 1.8]{guo:1996a}).  
Then argue as in the elliptic case of (1). 

Part (3) is similar to (2) for general $\phi' \in C_c^\infty(\GL_n(F))$. 
The elliptic case of (3) is similar to that of (1), 
observing that the vanishing condition implies the orbital integral map 
$\phi' \mapsto O_*(\phi')$ 
gives a function in $C_c^\infty(\mathbf s^{\ell})$ with support in $\mathbf s_\eps^{\ell}$.
\end{proof}

Now we deduce our local matching results, both for functions with support
in ``main'' sets and in the elliptic sets.

\begin{prop}\label{prop:matchGtoG'}
Fix $\eps \in F^\times$ and  $f \in C_c^\infty(G_\eps^{\main})$ (resp.\ $C_c^\infty(G_\eps^\ell)$). Then there exists $f' \in C_c^\infty(G'^{\main})$ (resp.\ $C_c^\infty(G'^\ell)$) such that 

\[
I'_{g'(\gamma)}(f')=
\begin{cases}
I_{g_{\eps}(\delta)}(f) &  \text{if } \gamma=\eps \delta \bar \delta \text{ for } \delta \in \Ucal_\eps^{\main},
\\
0 & \text{if } [\gamma ] \not \in \eps \mathcal N\Ucal_\eps^{\main}.
\end{cases}
\]
 In particular, $f'$ matches with $f$.
\end{prop}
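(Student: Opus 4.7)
The plan is to factor $f$ through the chain of reductions already built in this section: from $G_\eps$ down to a compactly supported function on $\GL_n(E)$ via Lemma \ref{lem:F_finU}, across to $\GL_n(F)$ via the twisted transfer of Proposition \ref{prop:AC}, and back up to $G'$ via Lemma \ref{lem:surj}. Starting from $f \in C_c^\infty(G_\eps^{\main})$, Lemma \ref{lem:F_finU} produces $F_f \in C_c^\infty(\Ucal_\eps^{\main})$ with $I_{g_\eps(\delta)}(f) = TO_\delta(F_f)$. I apply Proposition \ref{prop:AC}(1) to $F_f$ to obtain $\phi' \in C_c^\infty(\GL_n(F))$ with $O_\gamma(\phi') = TO_\delta(F_f)$ when $\gamma = \eps\delta\bar\delta$ and $O_\gamma(\phi') = 0$ for $[\gamma] \notin \eps\mathcal N \GL_n(E)$. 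After refining the support of $\phi'$ as described below, Lemma \ref{lem:surj} yields $f' \in C_c^\infty(G'^{\main})$ with $F'_{f'} = \phi'$, and Lemma \ref{lem:redac} converts the orbital-integral identity back into $I'_{g'(\gamma)}(f') = O_\gamma(\phi')$. The elliptic case runs verbatim using Proposition \ref{prop:AC}(2) in place of (1), and the elliptic halves of Lemmas \ref{lem:F_finU}, \ref{lem:redac}, \ref{lem:surj}.

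The main obstacle is that Proposition \ref{prop:AC} only delivers $\phi' \in C_c^\infty(\GL_n(F))$, while Lemma \ref{lem:surj} demands input in $C_c^\infty(\Ucal'^{\main})$. The observation I would use to bridge this is that any class $[\gamma]$ on which $O_\gamma(\phi')$ is nonzero automatically lies inside $\Ucal'^{\main}$: such a class has $\gamma = \eps\delta\bar\delta$ for some $\delta \in K := \mathrm{supp}(F_f) \subset \Ucal_\eps^{\main}$, whence $I_n - \gamma = I_n - \eps\delta\bar\delta \in \GL_n(F)$. Since $\Ucal'^{\main}$ is open and $\GL_n(F)$-conjugation-invariant (being cut out by nonvanishing of the conjugation-invariant polynomials $\det(\cdot)$ and $\det(I_n - \cdot)$), I can build a smooth conjugation-invariant cutoff $\chi$ on $\GL_n(F)$, supported in $\Ucal'^{\main}$ and identically $1$ on a neighborhood of the compact set $\eps \mathcal N(K)$. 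Then $\chi \phi' \in C_c^\infty(\Ucal'^{\main})$, and for every regular $\gamma$ one has $O_\gamma(\chi\phi') = \chi(\gamma) O_\gamma(\phi') = O_\gamma(\phi')$ (equal to $O_\gamma(\phi')$ when nonzero because $\chi(\gamma) = 1$ there, and equal to $0$ otherwise), so I may replace $\phi'$ by $\chi \phi'$ with no loss.

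Assertions (1) and (2) then follow from \eqref{eqn:iotaneven} and \eqref{eqn:iotanodd}. For $n$ even, \eqref{eqn:iotaneven} says $\iota_\eps(\Gamma_{H_\eps}^\ell(G_\eps))$ is independent of $\eps$, so the vanishing $I'_{g'(\gamma)}(f') = 0$ on classes outside $\eps \mathcal N\Ucal_\eps^{\main}$ is exactly the condition the definition of matching functions requires of $f'$. For $n$ odd, \eqref{eqn:iotanodd} decomposes $\Gamma_{H'}^\ell(G') = \iota_{\eps_1}(\Gamma_{H_{\eps_1}}^\ell(G_{\eps_1})) \sqcup \iota_{\eps_2}(\Gamma_{H_{\eps_2}}^\ell(G_{\eps_2}))$, and since $\eps_1 \mathcal N \GL_n(E)$ and $\eps_2 \mathcal N \GL_n(E)$ are disjoint in the elliptic locus, the $f'$ built from $f \in C_c^\infty(G_{\eps_1}^{\main})$ satisfies $I'_{g'(\gamma)}(f') = 0$ on the $\iota_{\eps_2}$-component, which is exactly what is needed for $f'$ to match $(f, 0)$.
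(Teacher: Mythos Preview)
Your proof is correct and follows essentially the same route as the paper: reduce via Lemma~\ref{lem:F_finU} to $F_f$, apply Proposition~\ref{prop:AC} to get $\phi'$ on $\GL_n(F)$, multiply by a conjugation-invariant cutoff to land in $C_c^\infty(\Ucal'^{\main})$, and lift via Lemma~\ref{lem:surj}. The only cosmetic difference is that the paper makes the cutoff completely explicit, taking $\chi = \1_c$ to be the characteristic function of the clopen set $\{X' : |\det(I_n - X')| > c\}$ with $c$ coming from Lemma~3.4(2), whereas you invoke an abstract smooth conjugation-invariant bump equal to $1$ near $\eps\mathcal N(K)$; both accomplish the same thing.
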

\begin{proof} 
The arguments for $G_\eps^{\main}$ and $G_\eps^\ell$ are identical, except for the
use of  different parts of
Proposition \ref{prop:AC}.  We just write the argument down for $G_\eps^{\main}$.
 
Let $f \in C_c^\infty(G_\eps^{\main})$. Then by Lemma \ref{lem:F_finU}, $F_f \in C_c^\infty(\Ucal^{\main})$ and for all $X \in \Ucal^{\main}$, $I_{g(X)}(f)=TO_X(F_f)$. By Proposition \ref{prop:AC} there exists a $\phi' \in C_c^\infty(\GL_n(F))$ such that for $X \in \GL_n(E)^{\reg, \mathrm{tw}}$,
$TO_X(F_f)=O_{\eps X\bar X}(\phi')$ and 
$O_{Y}(\phi')=0$ for $Y \not \in \eps \mathcal N \GL_n(E)$. By Corollary 3.13 of Chapter 1 in
\cite{arthur:1989}, these orbital integrals are equal up to a sign for any $X \in  \GL_n(E)$.
Since $F_f\in C_c^\infty(\Ucal^{\main})$, there exists a  $c$ such that $F_f(X)$ and hence also $TO_X(F_f)$ vanish for $X$ such that $|\det(I_n-\eps X \bar X)|>c$. Thus $O_{X'}(\phi')=0$ unless $|\det(I_n-X')|>c$. Let $\1_c$ be the characteristic function of 
\[
\{ X' \in \Ucal'^{\main} : |\det(I_n-X')|>c\}
\]
and set $\tilde \phi'=\phi' \cdot \1_c$. Then $\tilde \phi' \in C_c^\infty(\Ucal'^{\main})$ and $O_{X'}(\tilde \phi')=O_{X'}(\phi')$ for all $X' \in \Ucal'^{\main}$. By Lemma \ref{lem:surj} there exists an $f' \in C_c^\infty(G'^{\main})$ such that $\tilde \phi'=F_{f'}$ and $I'_{g'(X')}(f')=O_{X'}(\tilde\phi')$.
\end{proof}

We also want converse matching results.  

\begin{prop}\label{prop:matchG'toG}
Let $n$ be odd and let $f' \in C_c^\infty(G'^{\main})$ (resp.\ $C_c^\infty(G'^{\ell})$) 
satisfying the vanishing condition \eqref{f-vanish-odd}.
Then there exist $f_\eps \in C_c^\infty(G_\eps^{\main})$ (resp.\ $C_c^\infty(G_\eps^{\ell})$) for 
$\eps=\eps_1, \eps_2$ such that $(f_\eps)_\eps$ and $f$ are matching. 
\end{prop}

Note  \eqref{f-vanish-odd}
is vacuously satisfied when $f'$ has elliptic support by \eqref{eqn:iotanodd}.

\begin{proof} Let $f' \in C_c^\infty(G'^{\main})$.  By Lemma \ref{lem:redac},
we can consider $\phi' = F'_{f'} \in C_c^\infty(\GL_n(F))$.   Then apply Proposition 
\ref{prop:AC} to get the existence of $\phi_{\eps_1}, \phi_{\eps_2} \in C_c^\infty(\GL_n(E))$ 
that satisfy \eqref{eqn:AC2}. We now apply Lemma \ref{lem:surjeps} and complete the proof 
as before to find $f_{\eps_1}, f_{\eps_2}$.  The elliptic case is similar.
\end{proof}

\begin{prop}\label{prop:matchG'toGeven}
Let $n$ be even, $\eps \in F^\times$ and $f' \in C_c^\infty(G'^{\main})$ (resp.\ $C_c^\infty(G'^{\ell})$) such that $I'_{g'(X)}(f')=0$ for $X \not \in\eps \mathcal  N \GL_n(E) $. Then there exists $f \in C_c^\infty(G_\eps^{\main})$ (resp.\ $C_c^\infty(G_\eps^{\ell})$) such that $f$ and $f'$ are matching.
\end{prop}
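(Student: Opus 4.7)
The plan is to parallel the proof of Proposition \ref{prop:matchG'toG}, substituting part (4) of Proposition \ref{prop:AC} for part (3). First I would apply Lemma \ref{lem:redac} to obtain $F'_{f'} \in C_c^\infty(\Ucal'^{\main})$ (resp.\ $C_c^\infty(\Ucal'^{\ell})$) satisfying $O_{X'}(F'_{f'}) = I'_{g'(X')}(f')$ for $X' \in \Ucal'^{\main}$. The hypothesis that $I'_{X}(f') = 0$ for $X \not\in \mathcal N \GL_n(E)$ translates directly into $O_{X'}(F'_{f'}) = 0$ for $[X'] \not\in \mathcal N \GL_n(E)$, which is precisely the input condition for Proposition \ref{prop:AC}(4).

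Next I would invoke Proposition \ref{prop:AC}(4)---or its straightforward analog for the main case, proved by the same symmetric-function argument sketched there (the $n$ even case is in fact simpler since there is only a single $\eps$-coset of $N(E^\times)$ to worry about)---to produce $\phi_\eps \in C_c^\infty(\GL_n(E)^{\ell, \twist})$ (resp.\ its main-case analog) with $TO_\delta(\phi_\eps) = O_{\eps \delta \bar \delta}(F'_{f'})$; the factor of $\eps$ is absorbed by a right translation as in the proof of Proposition \ref{prop:AC}(1). By Lemma \ref{lem:localconv'}(2), $F'_{f'}$ vanishes on $\{X' : |\det(I_n - X')| < c\}$ for a suitable $c > 0$ depending on $f'$, hence $TO_\delta(\phi_\eps)$ vanishes whenever $|\det(I_n - \eps \delta \bar\delta)| < c$. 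Multiplying $\phi_\eps$ by the characteristic function of $\{\delta \in \GL_n(E) : |\det(I_n - \eps \delta \bar\delta)| > c\}$ yields $\tilde\phi_\eps \in C_c^\infty(\Ucal_\eps^{\main})$ (resp.\ $C_c^\infty(\Ucal_\eps^{\ell})$) whose twisted orbital integrals agree with those of $\phi_\eps$ on the relevant subset.

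Finally, Lemma \ref{lem:surjeps} produces $f \in C_c^\infty(G_\eps^{\main})$ (resp.\ $C_c^\infty(G_\eps^{\ell})$) with $F_f = \tilde\phi_\eps$. Combining Lemma \ref{lem:F_finU} with the formula $\iota_\eps([g_\eps(X)]) = [g'(\eps \mathcal N(X))]$ then gives
\[
I_{g_\eps(X)}(f) \;=\; TO_X(F_f) \;=\; O_{\eps X \bar X}(F'_{f'}) \;=\; I'_{g'(\eps X \bar X)}(f') \;=\; I'_{\iota_\eps(g_\eps(X))}(f'),
\]
while the hypothesis on $f'$ handles the classes in $\Gamma_{H'}^{\ell}(G') \setminus \iota_\eps(\Gamma_{H_\eps}^{\ell}(G_\eps))$, giving the required matching via \eqref{eqn:iotaneven}.

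The main obstacle is verifying the main-support analog of Proposition \ref{prop:AC}(4); this however follows by a direct adaptation of the symmetric-function argument (view $F' \mapsto O_\bullet(F')$ and $\phi \mapsto TO_\bullet(\phi)$ as surjections onto $C_c^\infty$-spaces on the characteristic-polynomial images $\mathbf s^{\main}$ and $\mathbf s_\eps^{\main}$, and use the vanishing hypothesis to descend). Once this extension is in hand, the remaining steps above are essentially mechanical.
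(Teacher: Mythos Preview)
Your proposal is correct and follows essentially the same route as the paper: the paper's own proof is a one-line remark that the argument parallels that of Proposition~\ref{prop:matchG'toG}, invoking part (4) of Proposition~\ref{prop:AC} in place of part (3). Your write-up simply makes explicit the steps the paper leaves implicit---including the cutoff by $\mathbf 1_c$ and the need for a ``main'' analog of Proposition~\ref{prop:AC}(4), which indeed follows by the same symmetric-function surjectivity argument.
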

\begin{proof}
This proof is similar to the proof of the previous proposition.
\end{proof}

\begin{cor}\label{cor:match}
Let $n$ be even, fix $i \in \{ 1, 2 \}$, and let $f_{\eps_i}\in C_c^\infty(G_{\eps_i}^{\ell})$.
Then for $j \in \{ 1, 2 \}$, there exists $f_{\eps_j} \in C_c^\infty(G_{\eps_j}^{\ell})$ such that $f_{\eps_i}$ and $f_{\eps_j}$ are matching.
\end{cor}

Here $f_{\eps_1}$ matching $f_{\eps_2}$ means they both match a single 
$f' \in C_c^\infty(G')$.  We do not know an analogue of this corollary for the main or regular sets, which
is what forces us to make elliptic assumptions in Theorems \ref{main-thm3}
and \ref{local-thm2}.

\begin{proof} This result follows from Propositions \ref{prop:matchGtoG'} and \ref{prop:matchG'toGeven},  and \eqref{eqn:iotaneven}.
\end{proof}

\begin{rem} \label{rem:arch-ell}
The above matching results for the elliptic set can be carried out similarly at archimedean places.
\end{rem}

By work of Guo \cite{guo:1996a} the fundamental lemma is known for the unit element in the Hecke algebra. As we will need this result for the global comparison, we state it here.

\begin{prop}\cite{guo:1996a}\label{prop:fund}
Let $E/F$ be an unramified quadratic extension of local nonarchimedean fields with odd residual characteristic.  Let $\Xi'$ be the characteristic function of $G'(\O)$ and $\Xi_{\eps_1}$ the characteristic function of $G_{\eps_1}(\O)$, where $\O$ is the integer ring of $F$. 
Assume the measures on $H$ and $H'$ are such that $\vol(H(\O)) = \vol(H'(\O))$. 
Then $\Xi' $ and $\Xi_{\eps_1}$ are matching functions.
\end{prop}

%
%

\section{Local Bessel distributions} \label{bess-sec}

%
%

In this section, $F$ is a local field of characteristic
zero (possibly archimedean), $E$ is a quadratic \'etale extension of $F$ (possibly $F \oplus F$), and $D=D_\eps$ is a
 quaternion algebra of $F$ which splits over $E$.  We allow for the possibility that $D$ is split, i.e.,
 $G := G_\eps = G'$.  Further, if $F$ is archimedean, we assume $E/F$ is split.

Let $\pi$ be an irreducible admissible unitary representation of $G$ with trivial central character.  
Let $\lambda$ be an $H$-invariant linear form on $\pi$.   Since $(G,H)$ is a Gelfand pair
(\cite{jacquet:1996} for $E/F$ split nonarchimedean, \cite{aizenbud} for $E/F$  split archimedean, 
and \cite{guo:1997} for $E/F$  inert nonarchimedean), $\lambda$ is unique up to scaling.
If $\pi$ has a nonzero $H$-invariant linear form, we say $\pi$ is $H$-distinguished.

We define the {\em local Bessel distribution} on $G$ for $\pi$ with respect to $\lambda$ to be
\[ B_\pi(f) = \sum_\phi \lambda(\pi(f) \phi) \overline{\lambda(\phi)} \]
for $f \in C_c^\infty(G)$, where $\phi$ runs over an orthonormal basis for $\pi$.
  Note $B_\pi \equiv 0$ if and only if $\lambda$ is zero.  Local Bessel distributions are also sometimes
  referred to as {\em spherical characters} in the literature.
  
Now let $\pi'$ be an irreducible admissible unitary representation of $G'$ with trivial central character.
If $E/F$ is split, we may identify $G$ with $G'$,  $H$ with $H'$, and define the local Bessel distribution
on $G'$ for $\pi'$ as above.  Assume $E/F$ is inert.  
Let $\lambda_1$ be an $H'$-invariant linear form on $\pi$ and let $\lambda_2$ be 
an $(H',\eta \circ \det)$-equivariant linear form on $\pi'$.  Since $(G',H')$ is a Gelfand pair, 
$\lambda_1$ and $\lambda_2$ are unique up to scaling (note $\lambda_2$ is the
same as an $H'$-invariant linear form on $\pi' \otimes \eta$).  We define the
{\em local Bessel distribution} on $G'$ for $\pi'$ with respect to $(\lambda_1,\lambda_2)$ to be
\[ B_{\pi'}(f') = \sum_\phi \lambda_1({\pi'}(f') \phi) \overline{\lambda_2(\phi)}, \]
where $\phi$ runs over an orthonormal basis for $\pi'$ and $f' \in C_c^\infty(G')$.  
As before, $B_{\pi'} \equiv 0$ if and only if $\lambda_1$ or $\lambda_2$ is zero.

\subsection{Generalities}

We will now establish some results we will need about $B_\pi(f)$ and $B_{\pi'}(f')$.   

Note that, by definition, the distribution $B_\pi(f)$ is bi-$H$-invariant. 
Similarly, if $E/F$ is inert, the distribution $B_{\pi'}(f')$
is left $H'$-invariant and is right 
$(H',\eta \circ \det)$-equivariant.  

From now on we assume $F$ is nonarchimedean and that $B_\pi \not \equiv 0$.  We say a distribution $B$ on $G$ is locally integrable if there is a locally integrable function $b$ on $G$
such that $B(f) = \int_G f(g)b(g) \, dg$ for all $f \in C_c^\infty(G)$.

\begin{prop} \label{Bpi-supp-prop}
Suppose the residual characteristic of $F$ is odd.
Then $B_\pi(f)$ is locally integrable.  In particular,
for any dense open subset $X \subset G$, there exists
$f \in C_c^\infty(X)$ such that $B_\pi(f) \ne 0$.
\end{prop}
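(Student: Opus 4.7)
\medskip
\noindent\textbf{Proof plan.} The plan is to separate the proposition into two stages: first establish that $B_\pi$ is given by integration against a locally integrable function $b$ on $G$, after which the claim about dense opens $X$ will follow from a standard regularity-and-continuity argument.

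For the local integrability step, I would invoke (and, in our slightly broader setting, extend) Guo's theorem from \cite{guo:spherical}, which in its original form treats the case $G=G'=\GL_{2n}(F)$ and shows that $B_\pi$ is represented by a function $b$ that is locally integrable on $G$ and locally constant on the $\theta$-regular set $G^{\reg}$, via the same germ-expansion mechanism that underlies Harish-Chandra's local integrability of characters. When $E = F\oplus F$ (so $G = G'$ and $\lambda$ is just an $H'$-invariant form), this applies directly. In the remaining quaternion case $G=\GL_n(D)$, I would mimic Guo's argument: descend the bi-$H$-invariant distribution $B_\pi$ to germ data at $\theta$-regular semisimple orbits and control each germ by a smooth character of the stabilizer torus, using the standard orbital-integral analysis on inner forms of $\GL_n$ that is available in odd residual characteristic.

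Granted local integrability, I write $B_\pi(f)=\int_G f(g)\,b(g)\,dg$ with $b$ locally constant on the open dense subset $G^{\reg}\subset G$ (its complement being contained in a proper closed subvariety, hence of Haar measure zero). Because $B_\pi\not\equiv 0$, we have $b\not\equiv 0$, so $b(g_0)\ne 0$ for some $g_0\in G^{\reg}$. Given any dense open $X\subset G$, the intersection $X\cap G^{\reg}$ is open and dense in $G^{\reg}$; by local constancy of $b$, I pick $g_1\in X\cap G^{\reg}$ with $b(g_1)\ne 0$, and then take a nonnegative bump function $f\in C_c^\infty(X)$ supported in a small compact-open neighborhood of $g_1$ on which $b$ equals the nonzero constant $b(g_1)$. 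This yields $B_\pi(f) = b(g_1)\int f \ne 0$.

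The genuine difficulty is the extension of Guo's local integrability statement to cover the quaternion $G$ (and the mild generalizations required for the inert versus split cases of $E/F$); the hypothesis of odd residual characteristic is precisely what keeps the germ and orbital-integral side of this analysis manageable, and it is the same obstruction that will later prevent us from handling even or archimedean places in the global application.
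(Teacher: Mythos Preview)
Your approach is essentially the same as the paper's: invoke Guo's local integrability theorem for spherical characters from \cite{guo:spherical} (proved there for $G$ split), extend it to the quaternion case, and then deduce the dense-open statement from local constancy of the representing function on the regular set.

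One point worth sharpening: your sketch of the extension (``mimic Guo's argument \ldots\ control each germ by a smooth character of the stabilizer torus, using the standard orbital-integral analysis on inner forms of $\GL_n$'') is vaguer than it needs to be, and the phrase ``inner forms of $\GL_n$'' slightly misdirects. The paper's observation is more concrete and is really the whole point: after the Rader--Rallis/Guo germ expansion, local integrability reduces to showing that Fourier transforms of $H$-invariant nilpotent orbital integrals on the tangent space $\mathfrak s$ are locally integrable. But the representation $(H,\mathfrak s)$ is \emph{literally the same} whether $D$ is split or ramified---in both cases it is $\GL_n(E)$ acting by twisted conjugation on $M_n(E)$---and the description of the nilpotent orbits coincides as well. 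So Guo's computation of those Fourier transforms carries over verbatim; no new orbital-integral analysis is required. This is the mechanism you should name rather than a generic appeal to inner-form harmonic analysis.

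Your deduction of the dense-open consequence is fine; just make explicit that $b$ nonzero as an $L^1_{\mathrm{loc}}$ class forces $b|_{G^{\reg}}$ nonzero (since the complement has measure zero), and then local constancy on $G^{\reg}$ gives an open set on which $b$ is a nonzero constant, which your dense $X$ must meet.
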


This is a minor extension of \cite{guo:spherical}. 

\begin{proof}
This result was proved in \cite{guo:spherical} under the  additional hypotheses that $G$ is split.  The proof of \cite{guo:spherical} in the case $D$ is ramified goes through similarly.  We
outline this now.  
We remark Rader--Rallis \cite{rader:rallis} showed (in great generality) that $B_\pi(f)$ is 
 locally integrable on $G^{\mathrm{reg}}$.

We drop the $\eps$ subscript from the notation in the previous section, e.g., $S=S_\eps$.
Fix $s \in S$ semisimple and let $x \in \rho^{-1}(s)$. 
Let $G_s$ be the connected component of the stabilizer of $s$ in $G$, and $H_s = G_s \cap H$.  Let $U_x$ be the
set of $g \in G_s$ such that the map $H \times G_s \times H \to G$ given by $(h_1,g,h_2) \mapsto h_1 xgh_2$ is submersive
at $(1,g,1)$.  This is an open bi-$H_s$-invariant neighborhood of 1 in $G_s$.   Further the image $\Omega_x$ of $U_x$ is an
open bi-$H$-invariant neighborhood of $x$ in $G$.  By standard Harish-Chandra theory, the restriction of $B_\pi$ to $C_c^\infty(\Omega_x)$ 
may be viewed as an $H_s$-invariant distribution $\Theta_x$ on $U_x$. 

For rather general symmetric spaces,
Rader--Rallis \cite{rader:rallis} proved a germ expansion theorem
for spherical characters when $x=1$, which was extended to arbitrary $x$ by Guo \cite[Theorem 2.1]{guo:spherical}.  This germ expansion expresses
$\Theta_x$, in a neighborhood of 1 in $U_x$ as a linear combination of Fourier transforms $\hat \Lambda$
of $H_s$-invariant distributions $\Lambda$ on $\mathfrak s_s$
supported in $\mathfrak N_{\mathfrak s_s}$.  Here $\mathfrak s_s$ is the Lie algebra of $S_s = G_s/H_s$, and $\mathfrak N_{\mathfrak s_s}$ is
the subset of nilpotent elements.  This reduces the problem to showing the $\hat \Lambda$'s are locally integrable.

Note the Lie algebra of $G$ can be written as 
\[ \mathfrak g =\left \{ \bmx \alpha & \eps \beta \\ \bar \beta & \bar \alpha \emx : \alpha, \beta \in M(n,E)\right \}.\]  
Consider the subspaces 
\[ \mathfrak h =\left \{ \bmx \alpha & \\ & \bar \alpha \emx \right \}, \quad \text{and} \quad
\mathfrak s = \left\{ \bmx & \eps \beta \\
\bar \beta & \emx\right \}. \]
Note $\mathfrak h$ is the Lie algebra of $H$, and $\mathfrak s$ plays the role of the Lie
algebra for $S$.

In the case that $D$ is split, Guo \cite[Proposition 4]{guo:spherical}
shows that the representation $(H_s, {\mathfrak s_s})$ is isomorphic to a product of representations of the form
$(G_0, \mathfrak g_0)$ and $(H(n_i), \mathfrak s(n_i))$.  Here, $G_0$ is a certain reductive group, and 
$H(n_i)$ and $\mathfrak s(n_i)$ denote the corresponding $H$ and $\mathfrak s$ for $G(n_i) = \GL(n_i,D)$.
It is not hard to show the same statement is true when $D$ is nonsplit (cf.\ \cite[Proposition 4.7]{chong-zhang}).

Harish-Chandra showed each nilpotent orbit in $\mathfrak g_0$ has a $G$-invariant measure with locally integrable Fourier
transform.  
To complete the proof, one needs to show the analogous statement for pairs $(H, \mathfrak s)$, i.e.,
each nilpotent orbit in $\mathfrak s$ has an $H$-invariant measure with locally integrable Fourier transform.
Guo achieves this by proving certain integral formulas for distributions $\Lambda$ on $\mathfrak s$
given by nilpotent orbital integrals and their Fourier transforms $\hat \Lambda$, and showing that
$\hat \Lambda$ is locally integrable using a Weyl integration formula.

Since the representations $(H,\mathfrak s)$ are isomorphic in the cases where
$D$ is split and where $D$ is ramified (the action is given by twisted conjugation of $\GL(n,E)$ on
$M(n,E)$), and the description of the nilpotent orbits of $\mathfrak s$ is the same in both cases
(cf.\ \cite{guo:1997}), Guo's proof extends to the case where $D$ is ramified without difficulty.
\end{proof}

\begin{lem}\label{Bpi-Gpm-lem} Put $G^\pm = \{ g \in G : \eta(\det g) = \pm 1 \}$.
Suppose $\pi \not \cong \pi \otimes \eta$ (hence $E/F$ is not split).  
Then $B_\pi$ is neither supported on
$G^+$ nor $G^-$. 
\end{lem}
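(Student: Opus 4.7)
The plan is to argue by contradiction: assume $B_\pi$ is supported on $G^+$ (the $G^-$ case is symmetric, up to an overall sign), and deduce that $\pi \cong \pi \otimes \eta$.

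First I record the key observation that $\eta \circ \det : G \to \{\pm 1\}$ is trivial on $H$. Indeed, for $h \in H = \GL_n(E)$ embedded in $G = \GL_n(D)$, the reduced norm gives $\det h = N_{E/F}(\det_E h) \in N_{E/F}(E^\times)$, so $\eta(\det h) = 1$ by local class field theory. Hence $\eta \circ \det$ is a bi-$H$-invariant function on $G$, and multiplication by it preserves the class of bi-$H$-invariant distributions.

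Since $\eta \circ \det \equiv 1$ on the assumed support of $B_\pi$, for every $f \in C_c^\infty(G)$,
\[ B_\pi(f \cdot (\eta \circ \det)) = B_\pi(f \cdot \1_{G^+}) = B_\pi(f). \]
On the other hand, from $\pi(f \cdot (\eta \circ \det)) = (\pi \otimes \eta)(f)$ and the fact that $\lambda$ remains an $H$-invariant linear form on the underlying space of $\pi \otimes \eta$ (since $\eta|_H = 1$), one computes
\[ B_\pi(f \cdot (\eta \circ \det)) = \sum_\phi \lambda\bigl((\pi \otimes \eta)(f)\phi\bigr) \overline{\lambda(\phi)} = B_{\pi \otimes \eta}(f), \]
where the Bessel distribution on the right is formed using the same $\lambda$ (which, up to scaling, is the unique $H$-invariant linear form on $\pi \otimes \eta$ by the Gelfand pair property). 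Combining these gives $B_\pi = B_{\pi \otimes \eta}$ as distributions on $G$.

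It remains to contradict $\pi \not\cong \pi \otimes \eta$ from this equality, which would follow from linear independence of Bessel distributions attached to inequivalent $H$-distinguished irreducibles. In the intended application $\pi$ is supercuspidal, and the argument is concrete: take $f$ to be a matrix coefficient of $\pi$ built from smooth vectors $u, v$ with $\lambda(u), \lambda(v) \neq 0$. Schur orthogonality for supercuspidal representations gives $(\pi \otimes \eta)(f) = 0$ (as $\pi \otimes \eta \not\cong \pi$), hence $B_{\pi \otimes \eta}(f) = 0$; a direct computation using that $\pi(f)$ is a rank-one operator sending $w \mapsto d(\pi)^{-1} \langle w, u \rangle v$ produces $B_\pi(f) = d(\pi)^{-1} \lambda(v)\overline{\lambda(u)} \neq 0$, a contradiction. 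The main technical obstacle is this last linear-independence step in full generality; absent supercuspidality one would have to exploit the local integrability of Proposition \ref{Bpi-supp-prop} to descend to an equality of locally integrable functions on the regular set and then compare the germ expansions used in the proof of that proposition. However, supercuspidality is available in the paper's applications, so the matrix-coefficient argument above is sufficient.
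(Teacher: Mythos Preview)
Your reduction is essentially the paper's: twist the test function by $\eta\circ\det$, observe that this intertwines $B_\pi$ with $B_{\pi\otimes\eta}$, and conclude that support on $G^+$ forces the two Bessel distributions to coincide (the paper does this via the multiplicity-one lemma and obtains $B_\pi(f^\eta)=\kappa\,B_{\pi\otimes\eta}(f)$ for some scalar $\kappa$; you compute $\kappa=1$ directly by reusing the same $\lambda$, which is a harmless sharpening).

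The substantive divergence is in the last step. The paper does not argue linear independence of $B_\pi$ and $B_{\pi\otimes\eta}$ by hand; it simply invokes \cite[Lem~2.2]{FLO}, which gives this for arbitrary irreducible admissible $\pi\not\cong\pi\otimes\eta$. Your matrix-coefficient argument via Schur orthogonality is correct but only treats the supercuspidal case, and your proposed route through local integrability and germ expansions is a sketch, not a proof. More importantly, your assertion that ``supercuspidality is available in the paper's applications'' is not accurate: the lemma feeds into Lemma~\ref{Bpi-pm-lem}, which in the proof of Theorem~\ref{main-thm1} is applied at an auxiliary place $v_3$ with no supercuspidality hypothesis on $\pi_{v_3}$. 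So as a proof of the lemma in the generality stated, yours has a gap at exactly the point where the paper appeals to \cite{FLO}; the cleanest repair is to cite that result rather than restrict to the supercuspidal case.
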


\begin{proof} Note $B_\pi$ and $B_{\pi \otimes \eta}$ are linearly independent
(cf.\ \cite[Lemma 2.2]{FLO}).  For $f \in C_c^\infty(G)$, put $f^\eta(g) = \eta(\det g)f(g)$. Note that  $\pi(f^\eta)=(\pi\otimes \eta)(f)$. Thus $B_\pi(f^\eta) = \kappa B_{\pi \otimes \eta}(f)$ for some 
$\kappa \in \C$ where $\kappa=1$ if the same $H$-invariant linear form $\lambda$ is chosen for both $B_\pi$ and $B_{\pi\otimes \eta}$. However if $f \in C_c^\infty(G^+)$, then $f^\eta = f$.  Hence if $B_\pi$
is supported on $G^+$, we would have $B_\pi(f) = B_\pi(f^\eta) = \kappa B_{\pi \otimes \eta}(f)$
for all $f \in C_c^\infty(G)$, a contradiction.  The case of $G^-$ is similar.
\end{proof}

\begin{lem}  \label{Bpi-pm-lem}
Suppose $\pi \not \cong \pi \otimes \eta$ and $F$ has odd residual characteristic.  Then
for any open dense $X \subset G$ and any $c \in \C^\times$, 
there exists $f \in C_c^\infty(X)$ such that
$B_\pi(f) \ne c B_{\pi \otimes \eta}(f)$.
\end{lem}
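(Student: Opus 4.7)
The plan is to realize the two ``bad'' conditions $B_\pi(f) = B_{\pi \otimes \eta}(f)$ and $B_\pi(f) = -B_{\pi \otimes \eta}(f)$ as cutting out two proper subspaces of $C_c^\infty(X)$, and then invoke the fact that a complex vector space is never the union of finitely many proper subspaces.

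First dispose of the trivial case: if $B_{\pi \otimes \eta} \equiv 0$, then the desired inequalities collapse to $B_\pi(f) \ne 0$, which is guaranteed by Proposition \ref{Bpi-supp-prop}. So assume $B_{\pi \otimes \eta} \not\equiv 0$. Since $\pi \not \cong \pi \otimes \eta$, the two distributions $B_\pi$ and $B_{\pi \otimes \eta}$ are linearly independent (as recalled in the proof of the preceding lemma using \cite{FLO}), whence $B_\pi - B_{\pi \otimes \eta}$ and $B_\pi + B_{\pi \otimes \eta}$ are both nonzero distributions. By Proposition \ref{Bpi-supp-prop} each of $B_\pi$ and $B_{\pi \otimes \eta}$ is locally integrable, so by linearity $B_\pi \pm B_{\pi \otimes \eta}$ are locally integrable as well. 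The ``in particular'' clause of Proposition \ref{Bpi-supp-prop} rests only on local integrability and nonvanishing of the distribution in question, so it applies equally well to these two distributions, giving that
\[
V_\pm := \{ f \in C_c^\infty(X) : B_\pi(f) \mp B_{\pi \otimes \eta}(f) = 0 \}
\]
are each proper subspaces of $C_c^\infty(X)$.

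Since $C_c^\infty(X)$ is a complex vector space it cannot be exhausted by the two proper subspaces $V_+$ and $V_-$, so any $f \in C_c^\infty(X) \setminus (V_+ \cup V_-)$ simultaneously meets both required inequalities. The only non-mechanical point in this plan is the extension of the ``in particular'' clause of Proposition \ref{Bpi-supp-prop} from a genuine Bessel distribution to the two distributions $B_\pi \pm B_{\pi \otimes \eta}$; but inspection of that proposition shows its proof uses only local integrability together with nonvanishing, both of which are preserved here, so this step is essentially free.
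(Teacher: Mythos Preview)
Your proof is correct but follows a genuinely different route from the paper's. The paper exploits the decomposition $G = G^+ \sqcup G^-$ (where $G^\pm = \{g : \eta(\det g) = \pm 1\}$) together with the preceding lemma, which shows that $B_\pi$ is not supported on either $G^+$ or $G^-$. Proposition~\ref{Bpi-supp-prop} applied to $X \cap G^\pm$ then produces $f_\pm \in C_c^\infty(X \cap G^\pm)$ with $B_\pi(f_\pm) \ne 0$; since $B_{\pi\otimes\eta}$ is a fixed scalar multiple of $B_\pi$ on $G^+$ and the negative of that multiple on $G^-$, a suitable linear combination $c_+ f_+ + c_- f_-$ avoids both bad conditions. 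Your approach instead bypasses the $G^\pm$ lemma entirely: you go straight from linear independence of $B_\pi$ and $B_{\pi\otimes\eta}$ (via \cite{FLO}) to the nonvanishing of the locally integrable distributions $B_\pi \pm B_{\pi\otimes\eta}$ on $C_c^\infty(X)$, and then finish with the union-of-proper-subspaces observation. This is cleaner and shows that the preceding $G^\pm$ support lemma is not actually needed here; the paper's argument, by contrast, is more constructive and exhibits the required $f$ explicitly.
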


\begin{proof}
Put $X^\pm = X \cap G^\pm$.  By Proposition \ref{Bpi-supp-prop} and Lemma \ref{Bpi-Gpm-lem}, we know there
exist $f_\pm \in C_c^\infty(X^\pm)$ such that $B_\pi(f_\pm) \ne 0$.  Since
$B_\pi(f_\pm) = \pm B_{\pi \otimes \eta}(f_\pm)$, we can choose constants $c_\pm$ such that
$f=c_+ f_+ + c_-f_-$ satisfies the desired property.
\end{proof}

\subsection{Elliptic support of Bessel distributions} \label{ell-supp-sec}
For use in our simple trace formula, we in fact want to know something stronger
about our Bessel distributions $B_\pi$---that they often do not vanish on the $H$-elliptic
set for $H$-distinguished $\pi$.  Namely, we will say $\pi$ (not necessarily
a priori $H$-distinguished) is {\it $H$-elliptic} if there exists an $H$-invariant functional $\lambda$
on $\pi$ such that the associated Bessel distribution $B_\pi(f) \ne 0$ for some 
$f \in C_c^\infty(G^{\mathrm{ell}})$.  In this section, we give a local proof of the following.

\begin{prop} \label{ell-supp-prop}
There exist $H$-elliptic (simple) supercuspidal representations when $E=\Q_2\oplus \Q_2$.
\end{prop}

We show this by reducing the problem to showing the nonvanishing of an elliptic orbital
integral for a supercusp form.  In the next section, we will use this
result together with global methods to show
the existence of  $H$-elliptic  representations for a general $E$.

In Section \ref{orbint-sec}, we defined local orbital integrals $I_g(f)$ for
$f \in C_c^\infty(G)$, which converge for $g \in G^{\mathrm{ell}}$.  Here it is more
convenient to work with orbital integrals for functions $\Phi \in C_c^\infty(G/Z)$,
for which we consider the orbital integral
\[ I^Z(g; \Phi) = \int_{H/Z} \int_{H/Z} \Phi(h_1 g h_2) \, dh_1 \, dh_2. \]
(Here we do not bother to quotient out by $H_g$, which has finite volume for
elliptic $g$.)
Note any such $\Phi$ is of the form $\Phi(g) = \int_Z f(gz) \, dz$ in which case we have
\[ I^Z(g; \Phi) = c I_g(f) \]
for some $c \ne 0$.  Hence $I^Z(g;\Phi)$ converges for $g \in G^{\mathrm{ell}}$ and is nonzero
if and only if $I_g(f)$ is nonzero.  On $G^{\mathrm{ell}}$, $I^Z(g; \Phi)$ is a smooth function.

\begin{lem} \label{ell-int-lem}
Suppose $\pi$ is a supercuspidal representation of $G$ over a $p$-adic field $F$.
Then $\pi$ is $H$-elliptic if and only if
the orbital integral $I^Z(g;\Phi) \ne 0$ for some $g \in G^{\mathrm{ell}}$ and some matrix
coefficient $\Phi$ of $\pi$.
\end{lem}

\begin{proof}
Let $\Phi$ be any matrix coefficient of $\pi$.  Then \cite[Theorem 6.1]{murnaghan} tells us
\[ D_\Phi(f) = \int_{H/Z} \int_{H/Z} \int_G f(g) \Phi(h_1 g h_2) \, dg \, dh_1 \, dh_2 \]
defines a bi-$H$-invariant distribution on $G$.  For $f$ supported on $G^{\mathrm{ell}}$,
we have absolute convergence of the orbital integrals and can write
\begin{equation} \label{eq:DPhi}
D_\Phi(f) = \int_G f(g) I^Z(g; \Phi) \, dg.
\end{equation}

Suppose $\pi$ is $H$-elliptic.  Then by \cite[Corollary 1.11]{czhang:scnote}
$B_\pi =  D_{\Phi}$ for some matrix coefficient $\Phi$ of $\pi$ (this also follows from
Theorem 6.1 and Lemma 8.3 of \cite{murnaghan} when $G= \GL(2n)$).
Since $B_\pi \ne 0$, $D_{\Phi} \ne 0$.
Then \eqref{eq:DPhi} implies $I^Z(g; \Phi) \ne 0$ for some
$g \in G^{\ell}$.

Now suppose $I^Z(g; \Phi) \ne 0$ for some $g \in G^{\mathrm{ell}}$.
Since $I^Z(g; \Phi)$ is locally constant on $G^{\mathrm{ell}}$,
we may choose $f$
with small support around $g$ to get $D_\Phi(f) \ne 0$.  In particular,
$D_\Phi \not \equiv 0$, so we must have that $\pi$ is $H$-distinguished
(cf.\  \cite[Theorem 6.1]{murnaghan}) and, by the uniqueness of $H$-invariant linear forms, $D_\Phi = c B_\pi$ for some
nonzero $c$.  Hence $B_\pi(f) \ne 0$.
\end{proof}

Now we will show the existence of $H$-elliptic supercuspidal representations. 
Let us assume $F$ is nonarchimedean, 
$E= F \oplus F$, so $G = \GL(2n)$ and $H = \GL(n) \times \GL(n)$.

We first recall some facts about simple supercuspidal representations of $G$.  See
\cite{KL-ssc} for more details.  

Let $\O$ be the integers of $F$ with maximal ideal $\p = \varpi \mathcal O$, 
and residue field $\F_q = \O/\p$ of order $q$.
Let $K=K_{2n}$ be the subgroup of $G(\mathcal O)$ consisting
of matrices which are upper unipotent mod $\mathfrak p$, and let $J = ZK$.  Fix a nontrivial
character $\psi$ of $\F_q = \O/\p$ and $t_1, \ldots, t_{2n} \in \F_q^\times$.  Now define a character
$\chi$ of $J$ by $\chi|_Z = 1$ and
\[ \chi(k) = \psi( t_1 x_1 + t_2 x_2 + \cdots + t_{2n} x_{2n} ) \]
for $k \in K$ of the form
\[ k \equiv \bmx 1& x_1 & * & \cdots & * \\
0 & 1 & x_2 & * & * \\
0 & 0 & \ddots & \ddots & \vdots \\
\vdots & & \ddots& 1 & x_{2n-1} \\
\varpi x_{2n} & 0 & \cdots &0 & 1
\emx \mod \p, \]
i.e., if $k = (k_{ij})_{ij}$, then $x_i = k_{i+1,i}$ for $1 \le i \le 2n-1$ and 
$\varpi x_{2n} = k_{1,{2n}}$.  

Let $\pi_\chi = c\mathrm{-Ind}_J^G \chi$.  This is a direct sum of $2n$ 
irreducible supercuspidal representations $\pi_{\chi,1}, \ldots, \pi_{\chi,2n}$,
which are the simple supercuspidals associated to $\chi$.  Next we 
want to define a matrix coefficient $\Phi$ of $\pi_\chi$.  This will be a sum
of matrix coefficients for the simple supercuspidals $\pi_{\chi,i}$.

For our purposes, we call $A \in M_m(F)$ a permutation matrix
if it has exactly one nonzero entry in each row and column.  If $e_1, \ldots, e_m$
denotes the standard basis of $F^m$, then $A$ permutes the lines $Fe_i$, and
we think of $A$ as representing the element of $\sigma_A \in S_m$, the
symmetric group on $\{ 1, 2, \ldots, m \}$, given by
$\sigma_A(i) = j$ if $A \cdot Fe_i = Fe_j$.

Let $w = w_{2n} \in \GL_{2n}(F)$ be the 0--1 permutation matrix associated to 
the product of 2-cycles $(2i, \ 2n-2i+1)$ for $1 \le i \le \lfloor \frac n2 \rfloor$.
Note we can inductively define $w_{2n}$ by
\[ w_2 = \bmx 1 &\\ & 1 \emx, \quad w_4 = \bmx 1 &&& \\ &&1& \\ &1&& \\ &&& 1 \emx,
\quad  \text{and} \quad w_{2n} =  \bmx 1&0&&0&0 \\ 0&0&&1&0 \\ &&\boxed{w_{2n-4}}&& \\  0&1&&0&0 \\ 0&0&&0&1 \emx \]
for $n > 2$.  Consider the conjugate subgroups $K' = K'_{2n} = w_{2n}K_{2n}w_{2n}$ and 
$J' =ZK'$.  For
$g \in J'$, let $\chi'(g) = \chi(wgw)$.  Let
$ \Phi(g) = 1_{J'}(g) \chi'(g),$
which is a matrix coefficient for $\pi_\chi$.  The reason to work with this $\Phi$ rather than
$\Phi_J(g) = 1_{J}(G)\chi(g)$ is that $\chi |_{H \cap J}$ is nontrivial, which
forces the integrals $I^Z(g; \Phi_J)$ to vanish (when convergent).  

To see $\chi' |_{H \cap J'} = 1$,  we can inductively write the subgroups
$K'_{2n}$ of $\GL_{2n}(\mathcal O)$ as
\[ K'_2 = \{ k \equiv \bmx 1 & x_1 \\ \varpi x_2 & 1 \emx \mod \p \}, \qquad
K'_4 = \{ k \equiv \bmx 1 & * & x_1 & *  \\ 
0 & 1 & 0 & x_3 \\
0 & x_2 & 1 & * \\
\varpi x_4 & 0 & 0& 1 \emx \mod \mathfrak p \}, \]
and, for $n > 2$, 
\[ K_{2n}' =  \{ k \equiv
\left( \begin{array} {c|c|c}
\begin{matrix} 1& * \\ 0 & 1
\end{matrix}&
\begin{matrix} * & * & \cdots & * \\ 0 & 0 & \cdots & 0
\end{matrix}&
\begin{matrix} x_1 & * \\ 0 & x_{2n-1}
\end{matrix} \\
\hline
\begin{matrix} 0 & * \\ \vdots & \vdots  \\ 0 & * \\ 0 & x_{2n-2} 
\end{matrix} &
\begin{matrix} w_{2n-4} k_{2n-4} w_{2n-4}
\end{matrix} &
\begin{matrix}  0 & * \\ \vdots & \vdots  \\ 0 & * \\ 0 & *
\end{matrix} \\
\hline
\begin{matrix} 0 & * \\ \varpi x_{2n} & 0
\end{matrix} &
\begin{matrix} x_2 & * & \cdots & * \\ 0 & 0 & \cdots & 0
\end{matrix} &
\begin{matrix} 1 & * \\ 0 & 1
\end{matrix}
\end{array} \right) \mod \p  : k_{2n-4} \in K_{2n-4}\},  \]
where we write $k_{2n-4}$ in the form
\[ k_{2n-4} \equiv 
\bmx 1  & x_3 & * & \cdots & * \\
0 & 1 & x_4 & \cdots & * \\
\vdots & & \ddots & \ddots & \vdots \\
0 &  &   & 1 & x_{2n-3} \\
0 & 0 & \cdots & 0 & 1
\emx \mod \p.  \]
As before $x_i$'s and $*$'s denote arbitrary elements of $\O$, and with $k$
in the above form, we have
\[ \chi'(k) = \psi(t_1 x_1 + t_2 x_2 + \cdots + t_{2n} x_{2n}). \]
It is now clear that all the $x_i$'s appear in the upper right (for $i$ odd)
or lower left (for $i$ even) $n \times n$ block of $k$, so $\chi'|_{H \cap J'} = 1$.

Let $k_0 \in K'_{2n}$ be the element where all $x_i$'s and diagonal entries are 1,
and all other entries are 0.  We will now show that $k_0 \in G^{\mathrm{ell}}$.  
Write $k_0 = \bmx I_n & X \\ Y & I_n \emx$ where all four blocks are $n \times n$.
It is easy to see that $k_0$ is ($\theta$-)elliptic if and only if $XY \in \GL_n(F)^{\mathrm{ell}}$.
The inductive description of $K'_{2n}$
implies that $X$ and $Y$ are permutation matrices, hence so is $XY$.

We claim that $XY$ represents an $n$-cycle in $S_n$.  Since exactly one
entry is $\varpi$ and the others are 1, this will imply $XY$ is similar to the matrix
\[ \bmx 
0 & 1  && \\
& \ddots & \ddots \\
&&0&1 \\
\varpi &&& 0 \emx, \]
whence has characteristic polynomial $\lambda^n -(-1)^n \varpi$, and 
must therefore be elliptic.

First we show the claim for $n$ odd.  
In this case, $X=Y$ so $XY=X^2$, so it suffices to check that $X$ is an 
$n$-cycle.  Note that for $1 \le j \le n$,
\[ Xj = \begin{cases}
n & j=1 \\
{n-j} & j \text{ even} \\
{n-j+2} & j \text { odd} > 1.
\end{cases} \]
Consequently, we see $X$ represents the permutation 
\[ (1, \ n, \ 2, \ (n-2), \ 4, \ (n-4), \ \cdots , \ (n-1) ) \]
which has order $n$.

Now suppose $n=2m$.  Then 
\[ Xj = \begin{cases}
n-j & j \text{ odd} \\
n-j+2 & j \text{ even}
\end{cases} 
\quad
\text{and}
\quad
Yj = \begin{cases}
n & j = 1\\
1 & j = n \\
n-j & j < n \text{ even } \\
n-j+2 & j > 1 \text{ odd}. 
\end{cases} \]
Consequently, $XY$ represents the $n$-cycle
\[ (1, \ 2, \ 4, \  6, \ \cdots , \ n, \ (n-1), \ (n-3), \ (n-5), \ \cdots , \ 3 ). \]
Thus our claim is justified, and $k_0$ is indeed elliptic.

\begin{lem} Let $h_1, h_2 \in H$ such that $h_1 k_0 h_2 \in K'$.
Then $\chi'(h_1 k_0 h_2) = \psi(u_1 t_1  + \cdots + u_{2n} t_{2n} )$
for some units $u_i \in \O^\times$.
\end{lem}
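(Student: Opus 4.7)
The plan is a direct entry-by-entry computation. Write $h_i = \diag(a_i, b_i)$ with $a_i, b_i \in \GL_n(F)$, so that
\[ h_1 k_0 h_2 = \bmx a_1 a_2 & a_1 X b_2 \\ b_1 Y a_2 & b_1 b_2 \emx, \]
where $X, Y$ are the $n \times n$ off-diagonal blocks of $k_0$, each a permutation matrix except for a distinguished $\varpi$-entry in $Y$ (coming from the affine simple root). The membership $h_1 k_0 h_2 \in K'$ translates, via the explicit inductive form of $K'_{2n}$, into entry-wise conditions: the diagonal blocks $a_1 a_2, b_1 b_2$ lie in $\GL_n(\O)$ with prescribed upper-triangular mod-$\p$ shape, the anti-diagonal blocks have specified entries vanishing mod $\p$ (or, at the position yielding $\varpi x_{2n}$, lying in $\p$), and the remaining entries correspond to the $x_i$'s whose unit-ness is what we want to prove.

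The first step is to exploit the diagonal-block conditions. Since $a_1 a_2, b_1 b_2 \in \GL_n(\O)$ with fixed mod-$\p$ reductions, their determinants lie in $\O^\times$, and finer valuation relations among the rows and columns of the $a_i, b_i$ follow. The vanishing constraints on the anti-diagonal blocks then correlate the valuations of $a_1$'s rows with $b_2$'s columns, and likewise for $b_1, a_2$. I expect these combined constraints to force $(a_i, b_i)$ to coincide, up to an element of $K' \cap H$ on each side, with a product of a diagonal matrix and a specific permutation matrix, the permutation being dictated by the positions of the nonzero entries of $X$ and $Y$.

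The second step is the explicit computation of each $x_i$-entry of $h_1 k_0 h_2$. Because $X, Y$ are essentially permutations, each such entry collapses to a single product of one entry of $a_1$ (or $b_1$) with one entry of $b_2$ (or $a_2$), possibly carrying a factor of $\varpi$ in the $x_{2n}$ case. The valuation bounds from the first step ensure each such product lies in $\O^\times$, yielding the asserted formula $\chi'(h_1 k_0 h_2) = \psi(u_1 t_1 + \cdots + u_{2n} t_{2n})$ with $u_i \in \O^\times$. It is convenient to normalize $(h_1, h_2)$ in advance by the bi-$(K' \cap H)$-action, which preserves both $h_1 k_0 h_2 \in K'$ and the value $\chi'(h_1 k_0 h_2)$ (using $\chi' |_{H \cap K'} = 1$), thereby reducing to a finite set of representatives in $(K' \cap H) \backslash H \times H / (K' \cap H)$.

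The main obstacle is bookkeeping: the explicit form of $K'_{2n}$ prescribes many entries of distinct types, and tracking which products of entries of $h_1, h_2$ contribute to each position is delicate. The $n = 1$ case serves as a sanity check --- the constraints force $a_1 a_2, b_1 b_2 \in 1 + \p$ and $a_1 b_2, b_1 a_2 \in \O^\times$ (using $(a_1 b_2)(b_1 a_2) = (a_1 a_2)(b_1 b_2) \in \O^\times$ together with the vanishing conditions), yielding $x_1 \equiv b_2 / a_2$ and $x_2 \equiv a_2 / b_2 \pmod{\p}$, both units. For general $n$, a clean path forward would be induction using the recursive description of $K'_{2n}$ in terms of $K'_{2n-4}$, peeling off the outer $2 \times 2$ corner blocks (where the computation mimics the $n = 1$ case) and applying the inductive hypothesis to the interior; the technical point is to verify that the interior block of $h_1 k_0 h_2$ itself takes the form $h'_1 k'_0 h'_2$ for appropriate data so that the induction applies.
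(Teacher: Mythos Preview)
Your plan is workable in principle but substantially more complicated than the paper's argument, and it leans on claims that are not yet justified. In particular, the assertion that each $x_i$-entry of $h_1 k_0 h_2$ ``collapses to a single product of one entry of $a_1$ (or $b_1$) with one entry of $b_2$ (or $a_2$)'' is false as stated: since $X$ is a permutation matrix, the $(i,j)$-entry of $a_1 X b_2$ is $\sum_k (a_1)_{ik}(b_2)_{\sigma(k),j}$, a full row--column inner product, not a single term. You would first need to establish your ``expected'' normalization (that $a_i, b_i$ reduce, modulo $K'\cap H$ on each side, to diagonal-times-permutation matrices), which you do not prove and which is itself nontrivial. The proposed induction on the recursive description of $K'_{2n}$ likewise requires checking that the inner $(2n-4)\times(2n-4)$ block of $h_1 k_0 h_2$ factors as $h_1' k_0' h_2'$ with $h_i' \in H$, which is not obvious.

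The paper sidesteps all of this by working only with \emph{determinants} of the four $n\times n$ blocks. Writing $h_1 k_0 h_2 = \begin{pmatrix} A_1 B_1^{-1} & A_1 X B_2^{-1} \\ A_2 Y B_1^{-1} & A_2 B_2^{-1} \end{pmatrix}$ and putting $v(A) = v(\det A)$, the diagonal blocks being in $\GL_n(\O)$ give $v(A_1)=v(B_1)$, $v(A_2)=v(B_2)$; integrality of the off-diagonal blocks together with $v(X)=0$, $v(Y)=1$ forces all four valuations to coincide. Hence the upper-right block $X'$ has $\det X' \in \O^\times$ and the lower-left block $Y'$ has $v(\det Y')=1$. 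Now the mod-$\p$ shape of $K'$ makes $X'$ ``triangular'' with the odd $x_i$'s on a generalized diagonal and $Y'$ of analogous shape with the even $x_i$'s (and $\varpi x_{2n}$ in the first column); the determinant constraints then force every $x_i$ to be a unit (for $x_{2n}$ one expands $\det Y'$ along its first column). No entry-by-entry bookkeeping, no normalization by $K'\cap H$, and no induction are required.
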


\begin{proof} Write $h_1 = \bmx A_1 & \\ & A_2 \emx$ and $h_2 = \bmx B_1^{-1} & \\ & B_2^{-1} \emx$.
Then
\[ h_1 k_0 h_2 = \bmx A_1 B_1^{-1} & A_1 X B_2^{-1} \\ A_2 Y B_1^{-1} & A_2 B_2^{-1} \emx. \]
By the structure of $K'$, we need $A_1 B_1^{-1}, A_2B_2^{-1} \in \GL_n(\O)$.  For $A \in M_n(F)$,
write $v(A)$ for $v(\det A)$, where $v$ is the valuation.  Then $v(A_1) = v(B_1)$ and
$v(A_2) = v(B_2)$.  On the other hand for $h_1 k_0 h_2 \in K'$,
looking at the upper right and lower left blocks of $h_1 k_0 h_2$, we need $v(A_1 X B_2^{-1}) \ge 0$
and $v(A_2 Y B_1^{-1}) \ge 1$.  Since $v(X) = 0$ and $v(Y) = 1$, this means
$v(A_1) \ge v(B_2) = v(A_2)$ and 
$v(A_2) \ge v(B_1) = v(A_1)$, i.e., $v(A_1) = v(A_2) = v(B_1) = v(B_2)$.  
Thus the upper right block of $h_1k_0h_2$ must have valuation 0,
and the lower left block must have valuation 1.

Write $h_1 k_0 h_2 = \bmx U & X' \\ Y' & V \emx$.  Now we can use the inductive expression for
$K'_{2n}$ to see that $v(X') = 0$ and $v(Y') = 1$ implies that all of the $x_i$-coordinates for
$h_1 k_0 h_2$ must be units in $\O$.  This is evident for $x_1, \ldots, x_{2n-1}$; for $x_{2n}$,
take the determinant of $Y'$ by expanding in cofactors along the first column (or last row).
\end{proof}

\begin{cor} Suppose $q=2$.  Then $\chi'(g) =1$ for any $g \in J' \cap Hk_0H$.
Consequently, $I^Z(k_0; \Phi) \ne 0$.
\end{cor}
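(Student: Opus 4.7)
The plan is to derive both assertions directly from the preceding lemma, using the $q=2$ hypothesis to eliminate the remaining freedom in the character values.

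For the first assertion, I would take $g\in J'\cap Hk_0H$, write $g=h_1 k_0 h_2$ with $h_i\in H$, and factor $g=zk$ with $z\in Z\subset H$ and $k\in K'$. Setting $h_1' := z^{-1}h_1 \in H$ then gives $h_1' k_0 h_2 = k \in K'$, which puts us in the hypothesis of the preceding lemma. That lemma yields
\[
\chi'(k) = \psi(u_1 t_1 + \cdots + u_{2n} t_{2n})
\]
for some $u_i \in \O^\times$. Since $\chi|_Z = 1$ and $wZw = Z$, the character $\chi'$ is also trivial on $Z$, so $\chi'(g) = \chi'(k)$. At this point the $q=2$ hypothesis takes over: each $u_i$ reduces to $1 \in \F_2$, the only available value for each $t_i$ is $1 \in \F_2^\times$, so the total $u_1 t_1 + \cdots + u_{2n} t_{2n}$ collapses modulo $\p$ to $2n \equiv 0 \pmod 2$. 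Hence $\chi'(g) = \psi(0) = 1$.

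Once the character has been trivialized on $J' \cap H k_0 H$, the matrix coefficient $\Phi = 1_{J'}\chi'$ collapses to $1_{J'}$ on this double coset, and the orbital integral reduces to the measure computation
\[
I^Z(k_0;\Phi) = \int_{H/Z}\int_{H/Z} 1_{J'}(h_1 k_0 h_2) \, dh_1 \, dh_2.
\]
I would then observe that because $k_0 \in K' \subset J'$ and $J'$ is open in $G$, the preimage $\{(h_1,h_2) : h_1 k_0 h_2 \in J'\}$ is an open subset of $H \times H$ containing $(1,1)$, and hence descends to a set of positive measure in $H/Z \times H/Z$. Nonnegativity of the integrand then forces $I^Z(k_0;\Phi) > 0$. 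There is no serious obstacle in this plan; the conceptual crux is simply that $q=2$ leaves no room for $\chi'$ to oscillate on the elliptic double coset, so no cancellation can occur and the volume contribution survives.
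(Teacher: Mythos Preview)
Your proof is correct and follows essentially the same approach as the paper's own proof: both reduce to the preceding lemma via the $q=2$ observation that every unit reduces to $1$ modulo $\p$ (so $\sum u_i t_i \equiv 2n \equiv 0$), and then note that $I^Z(k_0;\Phi)$ collapses to the volume of an open neighborhood of the identity in $H/Z \times H/Z$. Your version simply spells out a few more details (the explicit handling of the central factor $z$, and the openness argument for positivity) than the paper's one-line proof.
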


\begin{proof} When $q=2$, any element of $\mathcal O^\times$ is $1$ mod $\p$,
which means, with notation as in the lemma above, if $g=zh_1k_0h_2$ with $z \in F^\times$,
then $\sum u_i t_i  \equiv 0 \mod \p$.
This proves the first statement, which means $I^Z(k_0; \Phi)$ is just the
(nonzero finite) volume of $\{ (h_1, h_2) \in H/Z \times H/Z : h_1 k_0 h_2 \in J' \}$.
\end{proof}

Since $\Phi$ is a sum of matrix coefficients for $\pi_{\chi,1 }, \ldots, \pi_{\chi,2n}$,
this means that when $q=2$ (so we have no choice of $t_i$'s and there is a unique
$\chi$), one of the simple supercuspidals $\pi_{\chi,1}, \ldots, \pi_{\chi,2n}$ is $H$-elliptic,
completing the proof of Proposition \ref{ell-supp-prop}.

%
%

\section{A simple relative trace formula} \label{srtf-sec}

%
%

Return to the global situation, i.e., $F$ is a number field and $D$ a quaternion algebra
over $F$ which splits over a quadratic \'etale extension $E/F$. 
We allow for the possibility that $E=F \oplus F$ when $D=M_2(F)$, i.e.\ $G$ and $H$ may be $G'$ and $H'$, in order to treat both trace formulas of interest simultaneously.

Let $\theta$ be the inner automorphism of $G$ fixing $H$ as defined locally.
The notions of ($\theta$-)regular and  ($\theta$-)elliptic
elements of $G(F)$ are defined similarly as in the local case.  
If $E$ is a field, put $\chi(h) = \eta(\det h)$.  Otherwise put $\chi(h) = 1$.

We choose Haar measures $dz= \prod dz_v$ and $dh = \prod dh_v$  on
$Z(\A)$ and $H(\A)$ such that at all finite places
 $Z(\mathcal O_v)$ and $H(\mathcal O_v)$ have volume 1.  

For $f \in C_c^\infty(G(\A))$, we define the kernel
\[ K(x,y) = \sum_{ \gamma \in Z(F) \bs G(F)} \int_{Z(\A)} f(zx^{-1} \gamma y) \, dz, \]
and consider the (partial) distribution given by
\begin{equation}
 I(f) = \int_{H(F) Z(\A) \bs H(\A)} \int_{H(F)Z(\A) \bs H(\A)} K(h_1, h_2) \chi(h_2) \, dh_1 \, dh_2,
\end{equation}
when convergent.  For $\gamma \in G(F)^{\mathrm{ell}}$, recall from Section \ref{local:G'} that $\gamma$ is relevant.  Choose a Haar measure on 
$H_\gamma(\A)$ and put
\[ I_\gamma(f) = \vol(Z(\A) H_\gamma(F) \bs H_\gamma(\A))\int_{H_\gamma(\A) \bs (H(\A) \times H(\A))} f(h_1^{-1} \gamma h_2) \chi( h_2) \,
dh_1 \, dh_2, \]
where $H_\gamma = \{ (h_1, h_2) \in H \times H : h_1 \gamma h_2  = \gamma \} \cong T_X$
if $\gamma = g(X)$. 
We note that $I_\gamma(f)$ factors as a finite global constant and a product of local orbital integrals,
\begin{equation} \label{orb-int-fact-eq}
I_\gamma(f)= \vol(Z(\A) H_\gamma(F) \bs H_\gamma(\A)) \prod_v I_\gamma (f_v).
\end{equation}

For $\pi$ a  cuspidal automorphic representation of $G(\A)$ with $\omega_\pi = 1$, let
\[ K_\pi(x,y) = \sum_{\phi} \pi(f) \phi(x) \overline{\phi(y)} \]
where $\phi$ runs over an orthonormal basis for the space of $\pi$, and put
\[ I_\pi(f) =  \int_{H(F) Z(\A) \bs H(\A)} \int_{H(F)Z(\A) \bs H(\A)} K_\pi(h_1, h_2) \chi(h_2) \, dh_1 \, dh_2. \]
The terms $I_\gamma(f)$ and $I_\pi(f)$
are convergent, as will be explained in the proof below.

For $v < \infty$, put $\Xi_v = 1_{\GL_{2n}(\mathcal O_v)}$.

 A  function $f_v \in C_c^\infty(G(F_v))$ is said to be a supercusp form if 
 $
 \int_{N_v} f_v(gnh)=0, 
 $
for all $g, h \in G(F_v)$
and all unipotent radicals $N_v$ of proper parabolic subgroups of $G(F_v)$.
 If $\Phi_v \in C_c^\infty(G(F_v)/Z(F_v))$ is a matrix coefficient for a supercuspidal representation $\pi_v$,
  then there exists a supercusp form $f_v$ such that $\Phi_v(g) = \int_{Z(F_v)} f_v(gz) \, dz$.  In this case,
  we say $f_v$ is essentially a matrix coefficient for $\pi_v$.

\begin{prop} \label{srtf-prop}
Let $f = \prod f_v \in C_c^\infty(G(\A))$ such that (i) $f_v = \Xi_v$ at almost all $v$;
(ii) at some  finite place $v_1$ of $F$, $f_{v_1}$ is a supercusp form; and
 (iii) at some place $v_2$ of $F$, 
$f_{v_2} \in C_c^\infty(G(F_{v_2})^{\mathrm{ell}})$.  Then
\begin{equation} \label{srtf-eq}
 \sum_{\gamma \in G(F)^{\mathrm{ell}}} I_\gamma(f) = \sum_{\pi \text{ cusp}} I_\pi(f),
\end{equation}
where $\pi$ runs  over cuspidal representations of $G(\A)$ with 
trivial central character. Here both sides are absolutely convergent.
\end{prop}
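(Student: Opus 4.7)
The plan is to compute the distribution $I(f)$ in two ways—geometrically via an expansion of the kernel $K(x,y)$ over double cosets, and spectrally via the decomposition of $L^2(Z(\A)G(F)\bs G(\A))$—then use the two local hypotheses (ii) and (iii) to collapse each side to the claimed absolutely convergent sum, and finally match the two expansions termwise.

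On the geometric side, I would substitute $K(x,y) = \sum_{\gamma \in Z(F)\bs G(F)} \int_{Z(\A)} f(zx^{-1}\gamma y)\, dz$ into $I(f)$, collapse the action of $H(F)\times H(F)$ against the outer double integral, and reindex by $(H(F)\times H(F))$-orbits in $Z(F)\bs G(F)$. A standard unfolding identifies the term indexed by $[\gamma]$ with $I_\gamma(f)$. Hypothesis (iii) forces any contributing $\gamma$ to be $\theta$-regular elliptic at $v_2$; since this is a condition on the centralizer being anisotropic modulo center, $H_\gamma$ is globally anisotropic modulo center and $\vol(Z(\A)H_\gamma(F)\bs H_\gamma(\A))$ is finite. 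The local orbital integrals converge by Lemmas \ref{lem:localconv'} and \ref{lem:F_finU}, and almost all of them equal $1$ by the fundamental lemma. Compactness of $\mathrm{supp}(f)$, combined with the parametrization of elliptic double cosets by characteristic polynomials via \eqref{eqn:reg'} and \eqref{eqn:reg}, then cuts the sum down to finitely many orbits.

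On the spectral side, hypothesis (ii) implies $\pi_{v_1}(f_{v_1})=0$ for every local representation $\pi_{v_1}$ appearing as a subquotient of a properly induced representation, because the defining integrals against unipotent radicals vanish (Frobenius reciprocity / Jacquet-module argument). Consequently the residual and continuous parts of $K(x,y)$ vanish, leaving $K(x,y)=\sum_\pi K_\pi(x,y)$ with $\pi$ cuspidal and $\pi_{v_1}$ supercuspidal. For such $\pi$, cusp forms restrict to rapidly decreasing functions on $H(F)Z(\A)\bs H(\A)$ (Borel--Jacquet), so each $I_\pi(f)$ converges absolutely; the sum over $\pi$ is controlled by the compact support of $f$ together with the supercuspidal condition at $v_1$, which makes $\pi\mapsto\pi(f)$ trace-class on the relevant subspace of $L^2$.

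The main obstacle I expect is the absolute-convergence bookkeeping, particularly justifying the interchange of the outer integrations with the geometric sum on one hand and with the spectral sum on the other. The geometric interchange is handled by the finite-support reduction above. The spectral interchange is more subtle: one needs uniform control of the periods integrated against $H(F)Z(\A)\bs H(\A)$ across the contributing cuspidal spectrum, and this is precisely what the combination of a supercusp form at $v_1$ (truncating the spectrum to a trace-class piece) and rapid decay of cusp forms on $H$ are designed to provide.
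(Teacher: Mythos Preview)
Your proposal is correct and follows essentially the same approach as the paper: use (iii) to kill all non-elliptic geometric terms and reduce to a finite sum via a characteristic-polynomial/compactness argument, and use (ii) to kill the non-cuspidal spectrum so that $K(x,y)=\sum_{\pi\ \mathrm{cusp}} K_\pi(x,y)$ with rapidly decreasing $K_\pi$. The paper's proof is terser on both sides (it simply asserts the well-known implications of (ii) and (iii)), while you spell out the mechanisms; one small quibble is that the statement $I_\gamma(\Xi_v)=1$ at almost all $v$ is the elementary unramified computation rather than the full fundamental lemma.
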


\begin{proof}
First observe for $\gamma \in G(F)^{\mathrm{ell}}$, we formally
have a factorization into local orbital integrals $I_\gamma(f) = c_\gamma \prod_v I_\gamma(f_v)$
as in \eqref{orb-int-fact-eq}.  See equations \eqref{orbsplit} and \eqref{orbnonsplit} for the definition of $I_\gamma(f_v)$. (For $v$ split, $I_\gamma(f_v)$ is defined by \eqref{orbsplit} with $\eta$ trivial. The archimedean orbital integrals are defined in the same way as the nonarchimedean ones.) For a fixed $\gamma$, at almost all $v$, $I_\gamma(f_v)$ simply reduces to either the twisted orbital
integral on $\GL_n(E_v)$ or the usual orbital integral on $\GL_n(F_v)$ for the unit element of the
Hecke algebra.  
Since each local orbital integral converges, the global integrals 
$I_\gamma(f)$ are absolutely convergent by the convergence of the elliptic terms appearing
in the trace formula in \cite{arthur:1989}.

Hence, at least formally, by (iii), $I(f)$ equals the left hand side of \eqref{srtf-eq}.
We can justify this by showing that at most finitely many $I_\gamma(f)$ are nonzero.  
It suffices to show that at most finitely many elliptic $H$-conjugacy classes of
\[ S(\A) = \{ g \theta(g)^{-1} : g \in G(\A) \} \]
lie in a given compact subset $\Omega$ of $S(\A)$.  This follows from the fact that
only finitely many elliptic conjugacy classes of $\GL_n(E)$ intersect a given compact
subgroup of $\GL_n(\A_E)$.

Lastly, it is well known that condition (ii) implies
$K(x,y) = \sum K_\pi(x,y)$ where $\pi$ runs over cuspidal representations,
that
and each $K_\pi(x,y)$ is rapidly decreasing.  This makes $I(f) = \sum_\pi I_\pi(f)$, where
the sum is absolutely convergent.
\end{proof}

We use this result to get the existence of many $H(F_v)$-elliptic representations.  
 
\begin{prop} \label{ell-supp-cor} Suppose $k$ is a local field of characteristic $0$, let $K/k$
be a quadratic \'etale extension, and let $D(k)$ be the split or non-split quaternion algebra over $k$, which we take to be split if $K/k$ is.
 There exist irreducible admissible unitary representations $\tau$ of $\GL_n(D(k))$ which are 
$\GL_n(K)$-elliptic.  
\end{prop}

\begin{proof} We may globalize $k$, $K$ and $D(k)$ to $F$, $E$ and $D$
such that (a) these local algebras are the localizations of the corresponding global algebras at
some place $v_1$, (b) there is another place 
$v_2$ of $F$ over which $D$ splits and such that there exists an $H(F_{v_2})$-elliptic 
supercuspidal representation
$\tau_2$ of $G(F_{v_2})$ (Proposition \ref{ell-supp-prop}), and (c) there is some infinite place $v_3 \ne v_1$ such that 
$D_{v_3}$ is split. 

Choose a test function $f = \prod f_v$ as follows.  Let $f_{v_1}$ be the characteristic function
of an open compact subset of $G(F_{v_1})^{\ell}$.  By Lemma \ref{ell-int-lem}   we can take $f_{v_2}$ to be essentially a matrix coefficient of $\tau_2$
such that $I_\gamma(f_{v_2})$ is nonzero for any $\gamma \in \Omega_{v_2}$,
where $\Omega_{v_2}$ is some open subset of $G(F_{v_2})^{\ell}$.  At all other finite $v$,
choose $f_v$ to be a characteristic function of some compact subset of $G(F_v)$ such that
$f_v = \Xi_v$ outside of some finite set of places $S$.  The archimedean choices will be made
below.

Let $C \subset G(\A^\infty)$ be the support of $f^\infty = \prod_{v < \infty} f_v$.  Note 
$Z(\A^\infty)C \cap \SL_n(D(\A^\infty))$ is open in $\SL_n(D(\A^\infty))$.  
Strong approximation for $\SL_{n}(D)$ for indefinite $D$ 
tells us $\SL_n(D(F))$ is dense in $\SL_n(D(\A^\infty))$,
so there  exists $\gamma \in G(F) \cap Z(\A^\infty) C \subset G(F)^\ell$. 

Thus, for any such $\gamma$, 
we must have $I_\gamma(f_v) \ne 0$ at any $v$ where $f_v$ is a characteristic function.  
The only other finite place to consider is $v_2$, but we can guarantee
$I_{\gamma}(f_{v_2}) \ne 0$ by taking $\gamma \in \Omega_{v_2}$.  Fix one such $\gamma$.
For $v | \infty$, choose $f_v$ such that $I_{\gamma}(f_v) \ne 0$ and $I_{\gamma_1}(f) = 0$ 
for any $\gamma_1 \in G(F) - H(F) \gamma H(F)$.  
This is possible by taking the support of archimedean $f_{v_3}$
small enough, since only a finite number of global geometric terms $I_{\gamma_1}(f)$ can be nonzero
as explained in the proof of the previous proposition.  

At almost all places, we have $I_{\gamma}(f_v) \ge \vol ((H_\gamma \bs H \times H)(\O_{v}))$,
and the product over $v$ is nonzero.
Hence, for $f$ and $\gamma$ as above,  $I(f) = I_\gamma(f) \ne 0$.
By \eqref{srtf-eq}, $I_\pi(f) \ne 0$
for some cuspidal $\pi$.  By the uniqueness of local $H(F_v)$-invariant functionals,
$I_\pi(f)$ factors  into a product of local Bessel distributions $B_{\pi_v}(f_v)$, and thus
$B_{\pi_{v_1}}(f_{v_1}) \ne 0$.
\end{proof}

%
%

\section{Main results} \label{main-sec}

%
%

Let $G$, $H$, $G'$ and $H'$ be as in the introduction and choose measures as
in the previous section.  Assume $E$ is a field which is split at
each archimedean place.  On $G$, we keep the same notation for the (partial) distributions
$I_*$ defined in the previous section; on $G'$, we denote them with primes, 
i.e., by $I'_*$.  Write $\Sigma_s$ for the set of places of $F$ split in $E$ and $\Sigma_s^c$ for the set of places of $F$ inert or ramified in $E$.

Recall all representations are assumed to be unitary with trivial central character.

\subsection{Global results}

\begin{thm} \label{main-thm1}  Fix $D \in \Xef$ and say $G=G_D$.
Suppose $\pi$ is an $H$-distinguished cuspidal automorphic representation of $G(\A)$, 
which is supercuspidal at some finite place $v_1$ where $E/F$ is split
and $H(F_{v_2})$-elliptic at another place $v_2$. 
Let $\pi' = \JL(\pi)$ be the Jacquet--Langlands transfer to $G'(\A)$.  
Then 
$\pi'$ is $H'$-distinguished and $(H',\eta)$-distinguished.
\end{thm}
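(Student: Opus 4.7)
The plan is to compare the simple relative trace formula of Proposition \ref{srtf-prop} for $G=G_D$ with the analogous one for $G'$, deducing $H'$- and $(H',\eta)$-distinction of $\pi'$ from the nonvanishing of the $\pi$-isotypic contribution on $G$. I will first construct a test function $f=\prod_v f_v \in C_c^\infty(G(\A))$ as follows:
\begin{itemize}
\item at $v_1$, let $f_{v_1}$ be a supercusp form which is essentially a matrix coefficient of $\pi_{v_1}$, chosen so that $B_{\pi_{v_1}}(f_{v_1})\neq 0$ (this forces convergence of the spectral side, and since $v_1$ splits in $E$ the local Bessel distribution $B_{\pi_{v_1}}$ agrees with one for $\GL(2n)$);
\item at $v_2$, take $f_{v_2}\in C_c^\infty(G(F_{v_2})^{\ell})$ with $B_{\pi_{v_2}}(f_{v_2})\neq 0$; this is possible because $\pi_{v_2}$ is supercuspidal and $H$-distinguished, hence $H$-elliptic by Proposition \ref{ell-supp-prop} (this gives convergence of the geometric side);
\item at almost all remaining places, $f_v=\Xi_v$;
\item at the other finite places (odd inert non-$v_1,v_2$, and inert even places, where $\pi_v$ is supercuspidal by hypothesis), pick $f_v$ supported on $G_v^{\main}$ so that a matching $f'_v \in C_c^\infty(G'(F_v)^{\main})$ exists by Proposition \ref{prop:matchGtoG'}.
\end{itemize}
At the unramified inert places Guo's fundamental lemma (Proposition \ref{prop:fund}) supplies matching, and at split and archimedean places matching is trivial under our hypotheses. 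This assembles a global matching $f'=\prod f'_v$ on $G'(\A)$ of the same type.

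Next, applying Proposition \ref{srtf-prop} on both $G$ and $G'$ and using that the local orbital integrals match place-by-place (with $I'_{g'}(f'_v)=0$ for orbits not lying in the image of $\iota_\eps$ at even $v$'s in the relevant case), I would obtain the global geometric identity and hence the spectral identity
\[
\sum_{\sigma} I_\sigma(f) \;=\; \sum_{\sigma'} I'_{\sigma'}(f').
\]
Both sides are finite sums of cuspidal contributions because of the supercusp form at $v_1$. To isolate $\pi$ on the left, I would use the principle of linear independence of characters (\cite{lapid:rogawski}) together with Ramakrishnan's super strong multiplicity one result: the representations $\sigma'$ of $\GL_{2n}$ that are locally isomorphic to $\pi'$ at almost all split $v\in\Sigma_s$ are exactly $\pi'$ and $\pi'\otimes\eta$, and by local Jacquet--Langlands the corresponding $\sigma$ on $G$ are $\pi$ and $\pi\otimes\eta$. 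Varying the Hecke-eigenvalue data at split unramified places then yields
\[
I_\pi(f)+I_{\pi\otimes\eta}(f)\;=\;I'_{\pi'}(f')+I'_{\pi'\otimes\eta}(f').
\]

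Finally, I would arrange that the left hand side is nonzero. Factoring $I_\pi(f)=\vol(\cdots)\prod_v B_{\pi_v}(f_v)$ by local uniqueness of $H$-invariant functionals, the factors at $v_1$ and $v_2$ are already nonzero by choice. At each remaining inert odd place, Proposition \ref{Bpi-supp-prop} lets me choose $f_v$ supported on the main set (which is dense in $G_v$) with $B_{\pi_v}(f_v)\neq 0$; at inert even places $\pi_v$ is supercuspidal and I may take $f_v$ a matrix coefficient so that $B_{\pi_v}(f_v)\neq 0$ automatically. If $\pi\cong\pi\otimes\eta$ then $I_\pi(f)+I_{\pi\otimes\eta}(f)=2I_\pi(f)\neq 0$. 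Otherwise, invoking Lemma \ref{Bpi-pm-lem} at $v_2$, I can further adjust $f_{v_2}$ on the elliptic set so that $B_{\pi_{v_2}}(f_{v_2})\neq\pm B_{\pi_{v_2}\otimes\eta_{v_2}}(f_{v_2})$, preventing cancellation. Hence either $I'_{\pi'}(f')\neq 0$ or $I'_{\pi'\otimes\eta}(f')\neq 0$. Either way, unraveling the definition of $I'_{\sigma'}$ (which couples $\mathcal P'$ and $\mathcal P'_\eta$) shows that $\pi'$, or equivalently $\pi'\otimes\eta$, is simultaneously $H'$- and $(H',\eta)$-distinguished; since $\eta$-twisting interchanges these two notions, in either case $\pi'$ itself has both properties.

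The main obstacle, and the step that requires the most care, is the simultaneous construction of globally matching test functions $(f,f')$ whose local components at $v_2$ are elliptically supported and at the other inert places are supported on $G^{\main}_v$, while still ensuring nonvanishing of each local Bessel factor. This is precisely why the hypothesis that $E/F$ splits at all archimedean and even places (or that $\pi_v$ is supercuspidal at the even places) appears: the local integrability of $B_{\pi_v}$ via Proposition \ref{Bpi-supp-prop} is only available at odd residual characteristic, and elliptic matching at $v_2$ requires the elliptic-support refinements of Proposition \ref{prop:matchGtoG'} together with Proposition \ref{ell-supp-prop}.
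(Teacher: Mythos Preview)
Your overall strategy is exactly the paper's: build a nice $f$ with $I_\pi(f)\neq0$, produce a matching $f'$ via Propositions~\ref{prop:matchGtoG'} and~\ref{prop:fund}, equate the two simple trace formulas, separate using Ramakrishnan plus linear independence of characters, and rule out the cancellation $I_{\pi\otimes\eta}(f)=-I_\pi(f)$ via Lemma~\ref{Bpi-pm-lem}. Two steps in your write-up do not go through as stated, however.

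First, your treatment of the inert even places is inconsistent. You initially place $f_v$ in $C_c^\infty(G_v^{\main})$ (needed for matching), but then say you will take $f_v$ to be a matrix coefficient of the supercuspidal $\pi_v$ to force $B_{\pi_v}(f_v)\neq0$. A matrix coefficient is not supported on $G_v^{\main}$ (or $G_v^{\ell}$), so Proposition~\ref{prop:matchGtoG'} does not apply to it; and you cannot use Proposition~\ref{Bpi-supp-prop} to find $f_v\in C_c^\infty(G_v^{\main})$ with $B_{\pi_v}(f_v)\neq0$, since that result needs odd residual characteristic. The paper handles these places by taking $f_v\in C_c^\infty(G(F_v)^{\ell})$ with $B_{\pi_v}(f_v)\neq0$, which exists precisely by Proposition~\ref{ell-supp-prop} (distinguished supercuspidals are $H$-elliptic); matching is then available via the elliptic version of Proposition~\ref{prop:matchGtoG'}.

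Second, your non-cancellation step invokes Lemma~\ref{Bpi-pm-lem} at $v_2$ with $X=G(F_{v_2})^{\ell}$. But Lemma~\ref{Bpi-pm-lem} is stated for a \emph{dense} subset $X$, and the $\theta$-elliptic set is open but not dense (its complement contains nonempty open pieces of the regular set), so the lemma does not apply there; moreover its proof goes through Proposition~\ref{Bpi-supp-prop}, which again needs odd residual characteristic, while $v_2$ may be even. The paper instead adjusts the test function at a separate place $v_3$ where $f_{v_3}$ already lives in $C_c^\infty(G(F_{v_3})^{\main})$, a genuinely dense open set on which Lemma~\ref{Bpi-pm-lem} is available.
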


\begin{proof}  Since $\pi$ is $H$-distinguished, $I_\pi \not \equiv 0$.
We will choose a nice test function $f= \prod f_v \in C_c^\infty(G(\A))$ such that $I_\pi(f) \ne 0$.
By the uniqueness of $H$-invariant linear forms on $\pi$, we can factor $I_\pi(f) = \prod B_{\pi_v}(f_v)$
where the $B_{\pi_v}$'s are local Bessel distributions attached to certain linear functionals 
$\lambda_v = \lambda_{1,v} = \lambda_{2,v}$ as in Section \ref{bess-sec}.  

At $v_1$, we may take $f_{v_1}$ to be essentially a matrix coefficient 
(in the sense of Section \ref{srtf-sec}) of $\pi_{v_1}$ such that $B_{\pi_{v_1}}(f_{v_1}) \ne 0$. 
At $v_2$, we may take $f_{v_2} \in C_c^\infty(G(F_{v_2})^{\mathrm{ell}})$ such that
$B_{\pi_{v_2}}(f_{v_2}) \ne 0$.

There exists a finite set of places $S$, including all archimedean places,  
such that for $v \not \in S$, $B_{\pi_v}(\Xi_v) \ne 0$.  Enlarge $S$ if necessary so
that it contains all even places and all places where $E$ or $D$ ramifies.
For $v \not \in S$, take $f_v = \Xi_v$.
Away from $S$, choose $\lambda_v$ so that
$B_{\pi_v}(\Xi_v) = 1$ to ensure convergence of the factorization of $I_\pi(f)$.  Now 
consider $v \in S - \{ v_1, v_2 \}$.  If $v \in \Sigma_s$,  
take any $f_v$ such that $B_{\pi_v}(f_v) \ne 0$.  If $v \not \in \Sigma_s$, 
we may choose
$f_v \in C_c^\infty(G(F_v)^{\mathrm{main}})$ such that $B_{\pi_v}(f_v) \ne 0$ by Proposition \ref{Bpi-supp-prop}.
This defines $f$ such that $I_\pi(f) \ne 0$.

Now we will get a matching $f'$. For regular $\gamma$ and $\gamma'$ whose
double cosets correspond at $v$, choose
measures on $H_\gamma(F_v)$ and $H'_{\gamma'}(F_v)$ which are compatible.
Whenever $E/F$ is split, we can identify $G(F_v)$ and $G'(F_v)$ so that $H(F_v) = H'(F_v)$.  At such places,
take $f'_v = f_v$.  When $v \not \in S$ is inert, the function $f'_v = \Xi_v$ matches $f_v=\Xi_v$ by 
 Guo's fundamental lemma (Proposition \ref{prop:fund}, again identifying $G(F_v)$ with
 $G'(F_v)$).
When $v \in S$ is inert (and thus nonarchimedean by assumption), we know there exists a matching
$f'_v$ for $f_v$ by Proposition \ref{prop:matchGtoG'} when $v$ is odd; for any nonarchimedean $v$ this follows from \cite{chong-zhang}.
We may also assume $f'_{v_2} \in C_c^\infty(G'(F_{v_2})^{\mathrm{ell}})$.  
Let $f' = \prod f'_v \in C_c^\infty(G'(\A))$.   By the equality of the global volumes of stabilizers for 
matching elliptic elements, we have that $f'$ matches $f$ globally, in the sense that
$I_\gamma(f) = I'_{\gamma'}(f')$ for each regular matching $\gamma$ and $\gamma'$.

Therefore, Proposition \ref{srtf-prop} implies 
\begin{equation}\label{eqn:speceq}
 \sum_{\sigma \text{ cusp}} I_\sigma(f) = \sum_{\sigma' \text{ cusp}} I'_{\sigma'}(f').
\end{equation}
Let $S^c$ denote the complement of $S$. For $v \in \Sigma_s \cap S^c$, we may vary $f_v$ in the Hecke algebra and retain \eqref{eqn:speceq}.  Therefore,
the principle of linear independence of characters (see \cite[Lemma 4]{lapid:rogawski}) implies
\begin{equation} \label{thm61b}
 \sum_{\sigma \in \Pi} I_\sigma(f) = \sum_{\sigma' \in \Pi'} I'_{\sigma'}(f')
\end{equation}
where $\Pi$ (resp.\ $\Pi'$) denotes the set of cuspidal representations of $G(\A)$ (resp.\ $G'(\A)$) which,
at each $v \in \Sigma_s \cap S^c$, are isomorphic to $\pi_v$.  A result of Ramakrishnan for $\GL(n)$ \cite{ramakrishnan:mult1} tells us that if $\tau_1, \tau_2 \in \Pi'$, then
$\tau_2 \cong \tau_1$ or $\tau_2 \cong \tau_1 \otimes \eta$.
Hence by strong multiplicity one for $G'$, we have $\Pi' = \{ \pi', \pi' \otimes \eta \}$. 
Using strong multliplicity one for $G$ and the Jacquet--Langlands transfer to $G'$, we can also apply
\cite{ramakrishnan:mult1} to get that $\Pi = \{ \pi, \pi \otimes \eta \}$.
Thus \eqref{thm61b} becomes
\begin{equation}
 I_\pi(f) + I_{\pi \otimes \eta}(f) = I'_{\pi'}(f') + I'_{\pi' \otimes \eta}(f').
\end{equation}
Since $I'_{\pi'} \not \equiv 0$ if and only if $I'_{\pi' \otimes \eta} \not \equiv 0$
if and only if $\pi'$ is $H'$- and $(H',\eta)$-distinguished, 
we want to show $I_\pi(f) + I_{\pi \otimes \eta}(f) \ne 0$.  If $\pi \cong \pi \otimes \eta$, then
$I_{\pi \otimes \eta}(f) = I_\pi(f) \ne 0$.  If not, it is a priori possible that
$I_{\pi \otimes \eta}(f) = - I_\pi(f)$.  However, by Lemma \ref{Bpi-pm-lem}, we may choose 
$f_{v_3} \in C_c^\infty(G(F_{v_3})^{\mathrm{main}})$ at some odd non-split place $v_3$ to ensure that $I_{\pi \otimes \eta}(f) \ne - I_\pi(f)$.
\end{proof}

\begin{rem} While in light of \cite{chong-zhang} we do not need to appeal to
Proposition \ref{prop:matchGtoG'} for matching at odd places, we use it in the argument
because (i) \cite{chong-zhang} was not available at the time of the first version of our paper,
and (ii) we hope this approach may provide a simpler way to get global results in situations
where smooth matching is not known.  Without using \cite{chong-zhang}, the above argument still goes through with the additional hypotheses that at each
even place $v$ either $v$ is split or $\pi_v$ is $H(F_v)$-elliptic.
\end{rem}

\begin{prop} \label{main-thm2}
Let $\pi'$ be a cuspidal automorphic representation of $G'(\A)$ with trivial central character such that $\pi'$ is both $H'$- and $(H',\eta)$-distinguished, 
$\pi'$ is supercuspidal at a split place $v_1$, and $\pi'$ is $H(F_{v_2})$-elliptic at another
split place $v_2$.  
Assume that, for each $v$ inert in $E$, at least one of the following holds:

\medskip

(a) $v$ is an odd place at which $E/F$ is unramified and $B_{\pi'_v}(\Xi_v) \ne 0$;

(b) $B_{\pi'_v}$ is not identically zero on $C_c^\infty(X_v)$ 
where 
\[ X_v = \left\{ g \in G'(F_v)^{\mathrm{main}} : [g]  \in \bigcup_{\eps \in F_v^\times /
N E_v^\times} \iota_\eps  (\Gamma^{\mathrm{ss}}(G_\eps(F_v))) \right\}. \]

\medskip
\noindent
Then, there exists $D \in \Xefpi$
such that the Jacquet--Langlands transfer $\pi_D$ to $G_D(\A)$  is 
$H$-distinguished.
\end{prop}

Note (a) holds for almost all $v$ and (b) is automatically 
satisfied when $n$ is odd and $\pi'_v$ is $H'_v$-elliptic. 
In particular, this establishes Conjecture \ref{guo-conj}(2) under some local hypotheses
on $\pi'$ (admittedly, stronger hypotheses than one would like).

\begin{proof}
The proof is similar to the previous case.  We just explain where details differ.

Factor $I_{\pi'}(f')$ into local Bessel distributions $B_{\pi'_v}(f'_v)$ as defined in Section
\ref{bess-sec}.
We choose $f'_v$ such that $B_{\pi'_v}(f'_v) \ne 0$ at each place $v$ with the conditions that
(i)  $f'_v = \Xi_v$ outside of some finite set of places $S$ containing $v_1$ and $v_2$
(assume $S$ is small enough so every $v \in S \cap \Sigma_s^c$ satisfies (b));
(ii) $f'_{v_1}$ is a supercusp form;  (iii) $f'_{v_2} \in C_c^\infty(G'(F_{v_2})^{\mathrm{ell}})$;
and (iv) at any $v \in S \cap \Sigma_s^c$ satisfying (b), 
$f'_v \in C_c^\infty(X_v)$.

By Propositions \ref{prop:fund} and \ref{prop:matchG'toG} or \ref{prop:matchG'toGeven} 
(see also the partial converse in \cite{chong-zhang}), for each $v$ there is a pair of matching functions $(f_{v, \eps_1}, f_{v, \eps_2})$ that satisfy the following conditions: (i) $(f_{v_1, \eps_1}, f_{v_1, \eps_2})=(f_{v_1}',0)$; and (ii) $f_{v_2, \eps_1}$ has elliptic support. 

For $D_\eps\in \Xef$ we let $f_{D_\eps}=\prod_v f_{v, \eps}$. Denote
the distributions $I$ and $I_{\pi_D}$ defined in the previous section on $G_D$ by $I_{D}$ and $I_{D, \pi_D}$ respectively. Thus we have
\begin{equation}
 \sum_D I_D(f_D) = I'(f'),
\end{equation}
where in fact the sum on the left is finite.  
Again, by \cite{ramakrishnan:mult1} and strong multiplicity one, one gets
\begin{equation}
 \sum_{D \in X(E:F:\pi')} I_{D,\pi_D}(f_D) + I_{D,\pi_D \otimes \eta}(f_D) =  I'_{\pi'}(f') + I'_{\pi' \otimes \eta}(f').
\end{equation}
By Lemma \ref{Bpi-pm-lem}, the right hand side may be chosen nonzero, so that for at least one
such $D$, we have $I_{D,\pi_D} \not \equiv 0$ or $I_{D,\pi_D \otimes \eta} \not \equiv 0$.  But these conditions
are both equivalent to $\pi_D$ being $H$-distinguished.
\end{proof}

The following result tells us that when an analogue of Conjecture \ref{guo-conj}(2) for $n$ even holds, the $D$ should not be unique.

\begin{thm} \label{main-thm3}
Suppose $n$ is even and $D_1, D_2 \in \Xef$.  Let $\pi_{D_1}$ and $\pi_{D_2}$
be cuspidal automorphic representations of $G_{D_1}(\A)$ and $G_{D_2}(\A)$  
such that $\JL(\pi_{D_1}) \cong \JL(\pi_{D_2})$.
Suppose $\pi_{D_1}$ is  (i) supercuspidal at a place $v_1$ 
 such that $D_{1}(F_{v_1})\cong D_{2}(F_{v_1})$ and  (ii)  $H(F_v)$-elliptic for $v$ such that $D_1(F_v)\not\cong D_2(F_v)$. Then if $\pi_{D_1}$ is $H$-distinguished, so is $\pi_{D_2}$.\end{thm}
\begin{proof}
Assume $D_1 \not\cong D_2$. Again the proof is similar to the previous cases and we just explain where details differ.  Here we directly compare the trace formulas on $G_{D_1}$ and $G_{D_2}$. We
construct matching $f_{1,v}\in C_c^\infty(G_{D_1}(F_v))$ and $f_{2,v} \in C_c^\infty(G_{D_2}(F_v))$ such that $I_{g_{\eps_1}(X)}(f_{1,v})=I_{g_{\eps_2}(X)}(f_{2,v})$ for all $X \in \Gamma^{\ell, \twist}(\GL_n(E))$, and the global orbital integrals vanish on non-elliptic
terms. Let $S=\{ v  :D_{1}(F_v)\not  \cong D_{2}(F_v)\}$. 
We choose $f_1 \in C_c^\infty(G_{D_1}(\A))$ such that (i) for all $v$, $B_{\pi_{D_1,v}}(f_{1,v})\neq 0$;  (ii)  for almost all $v \not \in S$, $f_{1,v}=\Xi_v$; (iii) $f_{1,v_1}$ is a supercusp form;  and (iv)
  $f_{1,v} \in C_c^\infty(G_{D_1}(F_{v})^{\ell})$  for $v \in S$.
   There is a matching $f_2$ by taking
$f_{2,v}=f_{1,v}$ for all $v \in S^c$ 
and using Corollary \ref{cor:match} for the 
remaining $v$.
\end{proof}

\begin{rem} \label{final-rem}
Note Theorem \ref{main-thm3} remains valid if the only archimedean assumption
one makes is $D_{1}(F_v) \cong D_{2}(F_v)$ for each $v | \infty$, i.e., we need not
assume $E/F$ is split at each infinite place.
\end{rem}

\subsection{Local results}
\label{loc-res-sec}

Here we deduce some local consequences of our global results. 

Let $K/k$ be a quadratic extension of nonarchimedean local fields of characteristic 0, and $\eta_{K/k}$ the associated quadratic
character of $k^\times$.  Then we may choose our quadratic extension of number fields $E/F$ such that, for a fixed
place $v_0$ of $F$, one has $F_{v_0} \cong k$, $E_{v_0} \cong K$, and $E/F$ is split
at each archimedean place and each even place except possibly $v_0$.  We will
also fix an odd split place $v_1$ and
assume $F$ has a split even place $v_2\neq v_0$ 
such that $F_{v_2} \cong \Q_2$.  Identify $k=F_{v_0}$ and $K = E_{v_0}$.

Take $D \in \Xef$, and $G, G', H, H'$ as before.  Let $\tau$ and $\tau'$
 be irreducible admissible representations of $G(k)$  and $G'(k)$.
Recall $\tau$ is $H(k)$-distinguished if $\Hom_{H(k)}(\tau, \C)\neq 0$.  Similarly, $\tau'$ is $H'(k)$- (resp.\ $(H'(k),\eta_{K/k})$-) distinguished if 
$\Hom_{H'(k)}(\tau', \C)$ (resp.\ $\Hom_{H'(k)}(\tau',\eta_{K/k})$) is nonzero.

\begin{thm} \label{local-thm1}
Let $\tau$ be a supercuspidal representation of $G(k)$,
and $\tau'$ be its Jacquet--Langlands transfer to $G'(k)$.
If $\tau$ is $H(k)$ distinguished, then $\tau'$ is both
 $H'(k)$- and $(H'(k), \eta_{K/k})$-distinguished.
\end{thm}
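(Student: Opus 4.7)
The plan is to deduce the local statement from the global Theorem \ref{main-thm1} via a globalization argument, exactly in the spirit of how Proposition \ref{ell-supp-prop} was derived in Section \ref{srtf-sec}. The idea is that global distinction forces local distinction at every place, so if we can embed $\tau$ as the local component at $v_0$ of a global representation that satisfies the hypotheses of Theorem \ref{main-thm1}, the conclusion will propagate back to $\tau'$ at $v_0$.

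First I would set up the global data. Choose a number field $F$ and a quadratic extension $E/F$ together with a place $v_0$ of $F$ such that $F_{v_0} \cong k$ and $E_{v_0} \cong K$, arranged so that $E/F$ is split at every archimedean place and at every even place other than possibly $v_0$; this is a standard weak approximation construction. Choose a quaternion $F$-algebra $D/F$ with $D \in \Xef$ and $D_{v_0} \cong D(k)$, so that $G_D(F_{v_0}) \cong G(k)$. Now fix an auxiliary finite place $v_1 \neq v_0$ which splits in $E$, and pick any supercuspidal $H(k_{v_1})$-distinguished representation $\sigma$ of $G_D(F_{v_1})$ (which exists trivially at a split place, where the problem reduces to a Shalika-type question on $\GL_{2n}(F_{v_1})$; alternatively, one may take $\sigma$ to be any supercuspidal since at a split place $H \cong \GL_n \times \GL_n$ inside $\GL_{2n}$ and distinguished supercuspidals exist in abundance).

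Next, invoke the minor extension of the Hakim--Murnaghan globalization theorem used in the proof of Proposition \ref{ell-supp-prop} (cf.\ \cite[Thm 4.1]{PSP}): there exists a cuspidal automorphic representation $\pi$ of $G_D(\A)$ with trivial central character which is $H$-distinguished globally and such that $\pi_{v_0} \cong \tau$ and $\pi_{v_1} \cong \sigma$. In particular $\pi$ is supercuspidal at two places, one of which ($v_1$) splits in $E$, and $\pi_{v_0} = \tau$ is supercuspidal at the other. Since $E/F$ is split at every even place $v \neq v_0$, and at $v_0$ the component $\pi_{v_0} = \tau$ is supercuspidal, the hypotheses of Theorem \ref{main-thm1} are met. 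Consequently the Jacquet--Langlands transfer $\pi'$ of $\pi$ to $G'(\A) = \GL_{2n}(\A)$ is both $H'$-distinguished and $(H',\eta)$-distinguished.

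Finally, since the global periods $\mathcal P'$ and $\mathcal P'_\eta$ factor through the local components, the nonvanishing of $\mathcal P'$ (resp.\ $\mathcal P'_\eta$) on $\pi'$ implies that $\Hom_{H'(F_v)}(\pi'_v, \C)$ (resp.\ $\Hom_{H'(F_v)}(\pi'_v, \eta_{K/k})$) is nonzero at every place $v$, and in particular at $v = v_0$, where $\pi'_{v_0} \cong \tau'$ by compatibility of local and global Jacquet--Langlands. This yields the desired local distinction of $\tau'$. The main obstacle is the globalization step: one must ensure the existence of a global $H$-distinguished cuspidal $\pi$ with prescribed supercuspidal components $\tau$ at $v_0$ and $\sigma$ at an auxiliary split place $v_1$; this is exactly the type of simple-relative-trace-formula globalization invoked for Proposition \ref{ell-supp-prop}, and the verification that the cited Hakim--Murnaghan argument permits simultaneously prescribing two supercuspidal local components (rather than just one) is the point that requires care.
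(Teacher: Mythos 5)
Your proposal follows essentially the same route as the paper: globalize the pair $(\tau, \sigma)$, where $\sigma$ is an $H$-distinguished supercuspidal at a split auxiliary place, via the Hakim--Murnaghan argument (cf.\ the proof of Proposition \ref{ell-supp-prop}), apply Theorem \ref{main-thm1} to the resulting cuspidal $\pi$, and read off local distinction of $\pi'_{v_0} \cong \tau'$ from global distinction of $\pi'$. One caution: the parenthetical remark that you could take $\sigma$ to be ``any supercuspidal'' is not correct --- at a split place, being $\GL_n \times \GL_n$-distinguished on $\GL_{2n}$ is a nontrivial condition (essentially a Shalika/symplectic condition), so you must, as the paper does, invoke the existence of a \emph{distinguished} supercuspidal at $v_1$; your primary choice does this correctly, so the argument stands.
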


\begin{proof}
Let $\pi_{v_0} = \tau$.  By \cite{hakim:2002b}, there exists an $H(F_{v_1})$-distinguished (tame) supercuspidal representation $\pi_{v_1}$
of $G(F_{v_1})$. 
(Murnaghan  pointed out to us that one can also deduce this fact from \cite{murnaghan:2011}.)
By Proposition \ref{ell-supp-prop}, there also exists an 
$H(F_{v_2})$-elliptic (simple) supercuspidal representation $\pi_{v_2}$
of $G(F_{v_2})$.  
An argument of 
Hakim and Murnaghan \cite{hakim:2002} (see \cite[Theorem 4.1]{PSP} for a more
general form)
shows that $\pi_{v_0}$, $\pi_{v_1}$ and $\pi_{v_2}$ can be simultaneously
globalized to an $H$-distinguished representation
$\pi$ of $G(\A)$.
Then, by Theorem  \ref{main-thm1}, $\pi' = \JL(\pi)$ is $H'$-
and $(H',\eta)$-distinguished.  In particular $\pi'_{v_0}\cong \tau'$ is locally $H'(F_{v_0})$- and $(H'(F_{v_0}),\eta_{v_0})$-distinguished.
\end{proof}

\begin{thm} \label{local-thm2}
Let $D_1(k)$ and $D_2(k)$ be the two quaternion algebras over $k$, in some order.
Suppose $n$ is even, and $\tau_{D_1}$ and $\tau_{D_2}$ are irreducible admissible representations of $G_{D_1}(k)$ and $G_{D_2}(k)$
which correspond via Jacquet-Langlands. Assume $\tau_{D_1}$ is $H(k)$-elliptic and
supercuspidal.  Then if $\tau_{D_1}$ is $H(k)$-distinguished,
so is $\tau_{D_2}$.
\end{thm}

\begin{proof}
The proof is similar to the previous proof, with the following modifications:
We globalize $k$ to a number field $F$ as above such that $F_{w} \cong F_{v_0} \cong k$
for some place $w \ne v_0$.  Globalize $D_1$ and $D_2$ so that one is split
everywhere and one is ramified only at $v_0$ and $w$.  Globalize $\tau_{D_1}$ to
$\pi_{D_1}$ such that $\pi_{D_1,v_0} \cong \pi_{D_1,w} \cong \tau_{D_1}$.  
Then argue as above, applying Theorem \ref{main-thm3} at the end.
\end{proof}

Now we prove our final local result. 

\begin{proof}[Proof of Theorem \ref{intro-local-thm2}]
The globalization result  used above (\cite[Theorem 4.1]{PSP})
in fact tells us that there exists a cuspidal, globally $\GL_n(\A_E)$-distinguished representation $\pi$ of $\GL_{2n}(\A)$ such that, $\pi_{v_0} \cong \tau$,
 $\pi_{v_1}$ is an $H(F_{v_1})$-distinguished supercuspidal,
 $\pi_{v_2}$ is an $H(F_{v_2})$-elliptic supercuspidal, and 
 $\pi_v$ is unramified for  all finite $v \not \in \{ v_0, v_1, v_2 \}$.  
 By Theorem \ref{main-thm1}, we know $\pi$ must also be
 $H'$- and $(H', \eta)$-distinguished.  Hence $\pi$ is symplectic and $L(1/2, \pi_E) \ne 0$.
In particular, $\pi$ has a nonzero global Shalika period by \cite{jacquet-shalika}.
 Therefore $\tau$ has a nonzero local Shalika period, which means $\tau$ is symplectic by
 \cite{JNQ}.
 
Now $L(1/2, \pi_E) \ne 0$ implies the global root number $\epsilon(1/2, \pi_E) = +1$.  The global root number
 factors into a product of local root numbers $\epsilon(1/2, \pi_{E,v})$ (independent of
 choice of local additive character by self-duality).  Thus to show 
 $\epsilon(1/2, \pi_{E,v_0}) = \epsilon(1/2, \tau_K) = 1$, it suffices to show 
 $\epsilon(1/2, \pi_{E,v}) = +1$ for all $v \ne v_0$.
 
 Because each $\pi_v$ and $\pi_{E,v}$ is self-dual, we have that each
 $\epsilon(1/2, \pi_v)$ and $\epsilon(1/2, \pi_{E,v})$ is $\pm 1$.
Since we are working in even dimension,
$\epsilon(1/2, \pi_{E,v}) = \epsilon(1/2, \pi_v) \epsilon(1/2, \pi_v \otimes \eta_v)$.

If $E/F$ is split at $v$, then $\epsilon(1/2, \pi_{E,v}) = \epsilon(1/2, \pi_v)^2 = +1$.

If $E/F$ is inert at $v \ne v_0$, then $\pi_v$ and $\pi_v \otimes \eta_v$ are both 
unramified and thus have local root number $+1$,
whence $\epsilon(1/2, \pi_{E,v}) = +1$.

Last, suppose $E/F$ is ramified at $v$ and $v \ne v_0$.  Then 
$\pi_v \cong \pi(\chi_1, \ldots, \chi_{2n})$ is an unramified principal series, so 
$\epsilon(1/2, \pi_v)=1$.  It follows that
\[ \epsilon(1/2, \pi_v \otimes \eta_v) = \prod \epsilon(1/2, \chi_i \eta_v)
= \epsilon(1/2, \eta_v)^{2n} \prod \chi_i(\varpi_v), \]
where $\varpi_v$ is a uniformizer.  But this is $+1$ since $\epsilon(1/2, \eta_v) = \pm 1$
and $\prod \chi_i = 1$ as $\pi$ has trivial central character.
\end{proof}

\providecommand{\bysame}{\leavevmode\hbox to3em{\hrulefill}\thinspace}
\providecommand{\MR}{\relax\ifhmode\unskip\space\fi MR }
\providecommand{\MRhref}[2]{%
  \href{http://www.ams.org/mathscinet-getitem?mr=#1}{#2}
}
\providecommand{\href}[2]{#2}

\end{document}